\newtheorem{teo}{Theorem}[section]
\newtheorem{prop}[teo]{Proposition}
\newtheorem{lema}[teo]{Lemma}
\newtheorem{coro}[teo]{Corolary}
\theoremstyle{definition}
\newtheorem{dfn}[teo]{Definition}
\newtheorem{obs}[teo]{Remark}
\newtheorem{ex}[teo]{Example}
\newcommand{\nd}{\noindent}
\newcommand{\vu}{\vspace{.1cm}}
\newcommand{\vd}{\vspace{.2cm}}
\newcommand{\vt}{\vspace{.3cm}}
\newcommand{\El}{\varepsilon_L}
\newcommand{\Er}{\varepsilon_R}
\newcommand{\Ve}{\varepsilon}
\newcommand{\Ref}[1]{\overset{\eqref{#1}}{=}}
\title[Partial Actions of Weak Hopf Algebras]{Partial Actions of Weak Hopf Algebras: Smash Product, Globalization and Morita Theory}
\author[Castro]{Felipe Castro}
\address[Castro]{Universidade Federal do Rio Grande do Sul, Brazil}
\email{f.castro@ufrgs.br}
\author[Paques]{Antonio Paques}
\address[Paques]{Universidade Federal do Rio Grande do Sul, Brazil}
\email{paques@mat.ufrgs.br}
\author[Quadros]{Glauber Quadros}
\address[Quadros]{Universidade Federal do Rio Grande do Sul, Brazil}
\email{glauber.quadros@ufrgs.br}
\author[Sant'Ana]{Alveri Sant'Ana}
\address[Sant'Ana]{Universidade Federal do Rio Grande do Sul, Brazil}
\email{alveri@mat.ufrgs.br}
\thanks{The first and the third authors were partially supported by CNPq, Brazil}
\begin{document}

\begin{abstract}
	In this paper we introduce the notion of partial action of a weak Hopf algebra on algebras,
unifying the notions of partial group action \cite{DE}, partial Hopf action (\cite{AB},\cite{AB1},\cite{CJ}) and
partial groupoid action \cite{BP}. We construct the fundamental tools to develop this new subject, namely,
the partial smash product and the globalization of a partial action, as well as we establish a connection
between partial and global smash products via the construction of a surjective Morita context. In particular,
in the case that the globalization is unital, these smash products are Morita equivalent.
We show that there is a bijective correspondence between globalizable partial groupoid actions and symmetric
partial groupoid algebra actions, extending similar result for group actions \cite{CJ}.
Moreover, as an application we give a complete description of all partial
actions of a weak Hopf algebra on its ground field, which suggests a method to construct more general
examples.
\end{abstract}

\maketitle

{\scriptsize{\it Key words and phrases:} partial action, partial
smash product, globalization,  Morita theory, weak Hopf algebra}

{\scriptsize{\it Mathematics Subject Classification: primary 16T99; secondary 20L05} }

%
%

\section{Introduction}

Partial actions of groups on algebras was introduced in the literature by R. Exel in \cite{E}.
His main purpose in that paper was to develop a method that allowed to describe the
structure of $C^*$-algebras under actions of the circle group. The first approach of partial group actions
on algebras, in a purely algebraic context, appears later in a paper by M. Dokuchaev and R. Exel \cite{DE}.

Partial group actions can be easily obtained by restriction from the global ones, and this fact stimulated the
interest on knowing under what conditions (if any) a given partial group action is of this type. In the
topological context this question was dealt with by F. Abadie in \cite{abadie}. The algebraic version of a
globalization (or enveloping action) of a partial group action, as well as the study about its existence,
was also considered by M. Dokuchaev and R. Exel in \cite{DE}. A nice approach on the relevance of the
relationship between partial and global group actions, in several branches of mathematics,
can be seen in \cite{D}.

\vu

As a natural task, S. Caenepeel and K. Janssen \cite{CJ} extended the  notion of partial group action to the
setting of Hopf algebras and developed a theory of partial (co)actions of Hopf algebras, as well as
a partial Hopf-Galois theory. Based on the Caenepeel-Janssen's work, E. Batista and M. Alves in \cites{AB, AB1}
showed that every partial action of a Hopf algebra has a globalization and that the
corresponding partial and global smash products are related by a surjective Morita context. At almost the same time
D. Bagio and A. Paques developed a theory of partial groupoid actions extending, in particular, results of M.
Dokuchaev and R. Exel in \cite{DE} about partial group actions and their globalizations.

\vu

Actions of weak Hopf algebras is the precise context to unify all these above mentioned theories, and this is our main purpose.

\vu

In this paper we deal with actions of weak Hopf algebras and extend to this setting many of the results
above mentioned. As it is well known, Hopf algebras and groupoid algebras are perhaps the simplest examples
of weak Hopf algebras. The weak Hopf algebra theory has been started at the end of the 90's by G. B\"ohm, F. Nill and
K. Szlach\'anyi \cites{BS, BNS, BNS1, N, S}.

\vu

One of our main goals is to show that the notion of globalization can be extended to partial module algebras over weak Hopf algebras.
We succeed to prove that every (left) partial module algebra over weak Hopf algebras has a globalization
(also called enveloping action), extending the corresponding results on partial Hopf actions obtained by E. Batista e M. Alves
in \cite{AB} and \cite{AB1}. We also prove
the existence of minimal globalizations and that any two of them are equivalent, as well as that any
globalization is a homomorphic preimage of a minimal one (see Section \ref{glob}).

\vu

The other one is to ensure that the partial version of the smash product, as introduced by D. Nikshych in \cite{Nik},
can also be obtained (see Section \ref{smash}). The hardest task here is to show that such a partial smash
product is well defined, just because the tensor product, in this case, is not over the ground field.
The usual theory does not fit into our context since the definitions for partial structures are a bit different.
 The existence of partial smash products allows us to construct a surjective Morita context relating them with the
corresponding global ones (see Section \ref{morita}).

As an application, we describe completely all the partial actions of a weak Hopf algebra on its ground field,
which also suggests the construction of other examples of such partial actions, different from the canonical ones (see Section \ref{field}).
We also analyze the relation between partial groupoid actions, as introduced in \cite{BP}, and  partial
actions of groupoid algebras, showing how partial group actions, in particular, and partial groupoid actions, in general,
fit into this new context (see Section \ref{groupoid}).

\vu

In Section \ref{wpaction} we present the definitions and basic results that we will need in the sequel.

\vu

Throughout, $\Bbbk$ will denote a field and every $\Bbbk$-algebra is assumed to be associative and
unital, unless otherwise stated. Unadorned $\otimes$ means $\otimes_{\Bbbk}$.

\vu

%
%

\section{Partial actions of weak Hopf algebras}\label{wpaction}

\subsection{Weak Hopf algebras}

We start recalling the definition and some of the pro\-per\-ti\-es of a weak Hopf algebra over a field $\Bbbk$.
For more about it we refer to \cite{BNS}.

\begin{dfn}
A sixtuple $(H,m,u,\Delta,\varepsilon,S)$ is a weak Hopf algebra, with antipode $S$, if:

\vu

\begin{itemize}
	\item[(i)]  $(H,\ m,\ u)$  is a $\Bbbk$-algebra,
\vu

	\item[(ii)] $(H,\ \Delta,\ \varepsilon)$ is a $\Bbbk$-coalgebra,

\vu
	\item[(iii)] $\Delta(k h)=\Delta(k)\Delta(h)$,\ for all $h, k\in H$,

\vu

	\item[(iv)] $\varepsilon(k h_1)\varepsilon(h_2 g)=\varepsilon(khg)=\varepsilon(k h_2)\varepsilon(h_1 g),$

\vu

	\item[(v)] $(1_H \!\otimes\! \Delta(1_H))(\Delta(1_H)\!\otimes\! 1_H)=\Delta^2(1_H)=
(\Delta(1_H)\!\otimes\! 1_H)(1_H \!\otimes\! \Delta(1_H)),$

\vu

	\item[(vi)] $h_1S(h_2)=\varepsilon_{{}_{L}}(h),$

\vu

	\item[(vii)] $S(h_1)h_2=\varepsilon_{{}_{R}}(h),$

\vu
		
    \item[(viii)] $ S(h)=S(h_1)h_2S(h_3),$
\end{itemize}
\end{dfn}	
\nd where $\varepsilon_{{}_{L}}\colon H\to H$ and $\varepsilon_{{}_{R}}\colon H\to H$ are defined by
$\varepsilon_{{}_{L}}(h)=\varepsilon(1_1h)1_2$ and $\varepsilon_{{}_{R}}(h)=1_1 \varepsilon(h 1_2)$,
respectively. We will denote $H_L=\varepsilon_{{}_{L}}(H)$ and $H_R=\varepsilon_{{}_{R}}(H)$. It is clear from
these definitions that $H_L$ and $H_R$ are both finite dimensional over $\Bbbk$.

\vu

Algebras $H$ satisfying the five first statements enumerated above are simply called weak bialgebras.
As usual, we will adopt the Sweedler notation for the comultiplication $\Delta$ of $H$, that is,
$\Delta(h)=h_1\otimes h_2$ (summation understood), for any $h\in H$.

\vu

Many of the basic properties of a weak Hopf algebra proved in the finite dimensional case (see \cites{BNS,BNS1,CG})
also hold in the general case. We start by enumerating some of these properties which can be verified using
arguments similar to those used in the finite dimensional case. These properties will be very useful along this paper.

\begin{lema}\label{p1}
Let $H$ be a weak Hopf algebra. Then,
\begin{eqnarray}
\El \circ \El = \El \label{p1-1} \\
\Er \circ \Er = \Er \label{p1-2} \\
\Ve(h \El (k) ) = \Ve(hk) \label{p1-3} \\
\Ve(\Er (h) k) = \Ve(hk) \label{p1-4} \\
\El(h \El (k)) = \El(hk) \label{p1-5} \\
\Er( \Er (h) k) = \Er(hk) \label{p1-6}
\end{eqnarray}
for all $h,k$ in $H$.\qed
\end{lema}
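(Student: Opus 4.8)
The plan is to derive all six identities from the definitions $\El(h)=\varepsilon(1_1 h)1_2$, $\Er(h)=1_1\varepsilon(h1_2)$, together with axioms (iv), (v), and the coalgebra/algebra compatibility. I would treat the left and right statements in parallel, since each right-hand identity is the ``mirror'' of the corresponding left-hand one (swap the order of multiplication and use the second equality in (iv) and the symmetry of (v)); so below I only sketch the left versions.

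First I would establish \eqref{p1-3}: $\varepsilon(h\El(k)) = \varepsilon(h\, \varepsilon(1_1 k)1_2) = \varepsilon(1_1 k)\varepsilon(h1_2)$. Now apply (iv) in the form $\varepsilon(h1_1)\varepsilon(1_2 k)=\varepsilon(hk)$ after rewriting $\Delta(1_H)=1_1\otimes 1_2$; more precisely, $\varepsilon(1_1 k)\varepsilon(h 1_2)$ is, by coassociativity-free manipulation of $\Delta(1_H)$ and axiom (iv), exactly $\varepsilon(hk)$. The same scheme, with the factors in the other order, gives \eqref{p1-4}. Next, \eqref{p1-5}: expand $\El(h\El(k)) = \varepsilon(1_1\, h\El(k))1_2 = \varepsilon(1_1' h\,\varepsilon(1_1 k)1_2)1_2'$, where I use distinct symbols for the two copies of $\Delta(1_H)$; collecting the scalar $\varepsilon$-factors and using \eqref{p1-3} (already proved) on the inner expression, this collapses to $\varepsilon(1_1 hk)1_2 = \El(hk)$. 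Then \eqref{p1-6} is the mirror of \eqref{p1-5} via \eqref{p1-4}. Finally \eqref{p1-1}: $\El\circ\El(h) = \El(\varepsilon(1_1 h)1_2) = \varepsilon(1_1 h)\El(1_2)$, and one checks $\El(1_2)\otimes$-contracted-against-$1_1$ equals $1_1\otimes 1_2$, i.e. $\varepsilon(1_1' 1_2)1_2'\cdot(\text{paired with }1_1)$ reduces to $\El$ by axiom (v) and counitality; alternatively, $\El\circ\El=\El$ follows immediately from \eqref{p1-5} by taking $h=1_H$ once one knows $\El(1_H)=1_H$ (which is a standard consequence of (v) and counitality). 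I would also record the needed auxiliary facts $\El(1_H)=1_H=\Er(1_H)$ and that $\El$ is idempotent-valued at the level needed, as a preliminary line.

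The routine-but-fiddly part is bookkeeping the multiple copies of $\Delta(1_H)$ and knowing which instance of axiom (iv) or (v) to invoke; the genuinely substantive inputs are precisely axioms (iv) and (v), which encode that $\varepsilon$ is only ``weakly'' multiplicative and that $\Delta(1_H)$ need not be $1_H\otimes 1_H$. The main obstacle I anticipate is \eqref{p1-5} (and hence \eqref{p1-6}): unlike \eqref{p1-3}--\eqref{p1-4}, which are pure scalar identities falling straight out of (iv), here one must show two elements of $H$ (not just scalars) coincide, so the argument needs to carry an uncontracted leg $1_2$ through the manipulation and then re-apply an already-proved scalar identity inside. Once \eqref{p1-5}--\eqref{p1-6} are in hand, the idempotency statements \eqref{p1-1}--\eqref{p1-2} are a one-line specialization. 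Since the paper explicitly says these ``can be verified using arguments similar to those used in the finite dimensional case,'' I would keep the write-up terse, citing \cite{BNS,BNS1,CG} for the parallel computations and spelling out only the step where infinite-dimensionality could conceivably matter (it does not, because all manipulations are finitary in the Sweedler indices).
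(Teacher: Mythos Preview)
Your proposal is correct, and there is essentially nothing to compare it against: the paper gives no proof of this lemma at all. It simply marks the statement with \qed, having said just beforehand that these properties ``can be verified using arguments similar to those used in the finite dimensional case'' with a citation to \cite{BNS,BNS1,CG}. Your sketch is exactly that standard verification.

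One minor simplification worth noting: the step you flag as the ``main obstacle,'' namely \eqref{p1-5}, is actually a one-liner once \eqref{p1-3} is in hand. There is no need for a second copy of $\Delta(1_H)$; just apply \eqref{p1-3} with $1_1 h$ in place of $h$:
\[
\El(h\El(k)) = \varepsilon\bigl(1_1\, h\, \El(k)\bigr)\,1_2 \overset{\eqref{p1-3}}{=} \varepsilon(1_1\, hk)\,1_2 = \El(hk).
\]
Likewise \eqref{p1-1} follows directly from \eqref{p1-3} without needing $\El(1_H)=1_H$ as a separate preliminary:
\[
\El(\El(h)) = \varepsilon\bigl(1_1\,\El(h)\bigr)\,1_2 \overset{\eqref{p1-3}}{=} \varepsilon(1_1 h)\,1_2 = \El(h).
\]
So the logical order can be streamlined to: prove \eqref{p1-3} and \eqref{p1-4} from axiom (iv) with $h=1_H$; then \eqref{p1-5}, \eqref{p1-6}, \eqref{p1-1}, \eqref{p1-2} are immediate corollaries. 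Your write-up is correct as stated, just slightly more laborious than necessary on those later items.
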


\begin{lema}\label{p2}
Let $H$ be a weak Hopf algebra. Then, $\Delta(1_H) \in H_R \otimes H_L$.\qed
\end{lema}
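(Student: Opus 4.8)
The plan is to show that $\Delta(1_H)$ is unchanged when one applies $\Er$ to its first tensor leg and $\El$ to its second; since $H_R=\Er(H)$ and $H_L=\El(H)$ by definition, the resulting equality $\Delta(1_H)=\Er(1_1)\otimes\El(1_2)$ then exhibits $\Delta(1_H)$ as an element of $H_R\otimes H_L$.

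The first step is to prove $1_1\otimes\El(1_2)=\Delta(1_H)$. Expanding $\El(1_2)=\Ve(1_{1'}1_2)1_{2'}$ with a second copy $1_{1'}\otimes 1_{2'}$ of $\Delta(1_H)$, the claim reads $\Ve(1_{1'}1_2)\,1_1\otimes 1_{2'}=\Delta(1_H)$. To obtain it I would start from axiom (v) in the form $(1_H\otimes\Delta(1_H))(\Delta(1_H)\otimes 1_H)=\Delta^2(1_H)$, which with the above labelling says $\Delta^2(1_H)=1_1\otimes 1_{1'}1_2\otimes 1_{2'}$, and then apply $\mathrm{id}\otimes\Ve\otimes\mathrm{id}$ to both sides: the left-hand side collapses to $\Delta(1_H)$ because coassociativity together with the counit axiom gives $(\mathrm{id}\otimes\Ve\otimes\mathrm{id})\Delta^2=\Delta$, while the right-hand side becomes precisely $\Ve(1_{1'}1_2)\,1_1\otimes 1_{2'}$. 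The companion identity $\Er(1_1)\otimes 1_2=\Delta(1_H)$ is proved the same way: writing $\Er(1_1)=1_{1'}\Ve(1_1 1_{2'})$ and using the same form of axiom (v) but with the two copies of $\Delta(1_H)$ in the opposite roles yields $\Delta^2(1_H)=1_{1'}\otimes 1_1 1_{2'}\otimes 1_2$, to which one again applies $\mathrm{id}\otimes\Ve\otimes\mathrm{id}$.

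Finally I would combine the two: applying $\Er\otimes\mathrm{id}$ to $1_1\otimes\El(1_2)=\Delta(1_H)$ and then using $\Er(1_1)\otimes 1_2=\Delta(1_H)$ gives $\Er(1_1)\otimes\El(1_2)=(\Er\otimes\mathrm{id})\Delta(1_H)=\Delta(1_H)$, whose left-hand side visibly lies in $\Er(H)\otimes\El(H)=H_R\otimes H_L$. I expect the only point requiring care to be the bookkeeping of Sweedler indices when two copies of $\Delta(1_H)$ appear simultaneously — in particular, arranging the labelling so that axiom (v) delivers the middle tensor factor in the order $1_{1'}1_2$ (resp.\ $1_1 1_{2'}$) and not the reverse; everything else is a routine use of coassociativity and the counit. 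Alternatively, one may skip the last combination step by invoking the elementary fact that an element of $H\otimes H$ lying in both $H\otimes H_L$ and $H_R\otimes H$ must already lie in $H_R\otimes H_L$.
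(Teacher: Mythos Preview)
Your argument is correct. The two auxiliary identities $1_1\otimes\El(1_2)=\Delta(1_H)$ and $\Er(1_1)\otimes 1_2=\Delta(1_H)$ follow exactly as you describe from axiom~(v) together with the counit axiom, and combining them via $\Er\otimes\mathrm{id}$ yields $\Delta(1_H)=\Er(1_1)\otimes\El(1_2)\in H_R\otimes H_L$. The alternative you mention at the end is also valid: for subspaces $V,W\subseteq H$ one always has $(V\otimes H)\cap(H\otimes W)=V\otimes W$, so identities (1) and (2) alone already force $\Delta(1_H)\in H_R\otimes H_L$.

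There is nothing to compare against: the paper states Lemma~\ref{p2} without proof (it is listed among the standard properties carried over from \cite{BNS} and marked with \qed). Your write-up supplies precisely the kind of direct verification the authors allude to; the only cosmetic point is that your caveat about Sweedler bookkeeping is perhaps overly cautious, since in both uses of axiom~(v) you invoke the \emph{same} equality $(1_H\otimes\Delta(1_H))(\Delta(1_H)\otimes 1_H)=\Delta^2(1_H)$ and merely relabel which copy of $\Delta(1_H)$ carries the prime.
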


\begin{lema}\label{p3}
The following statements hold:
\begin{eqnarray}
z \in H_L \Leftrightarrow \Delta(z) = 1_1 z \otimes 1_2 \label{p3-1}
\end{eqnarray}
and in this case $\Delta(z) = z 1_1 \otimes 1_2$.
\begin{eqnarray}
w \in H_R \Leftrightarrow \Delta(w) = 1_1 \otimes w 1_2 \label{p3-2}
\end{eqnarray}
and in this case $\Delta(w) = 1_1  \otimes 1_2 w$.

\vu

Moreover, $\Delta(H_L) \subset H \otimes H_L$ and $\Delta(H_R) \subset H_R \otimes H$.\qed
\end{lema}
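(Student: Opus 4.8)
The plan is to establish the two equivalences together with their ``in this case'' refinements, and then to read off the inclusions from Lemma~\ref{p2}. The reverse implications are immediate: if $\Delta(z)=1_1z\otimes 1_2$, applying $\varepsilon\otimes\mathrm{id}$ together with the counit axiom gives $z=\varepsilon(1_1z)1_2=\varepsilon_L(z)\in H_L$, and dually, if $\Delta(w)=1_1\otimes w1_2$, applying $\mathrm{id}\otimes\varepsilon$ gives $w=1_1\varepsilon(w1_2)=\varepsilon_R(w)\in H_R$. So the real content lies in the forward implications and the refined formulas.

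For ``$\Rightarrow$'' in \eqref{p3-1}, I would write $z=\varepsilon_L(h)=\varepsilon(1_1h)1_2$ and apply $\Delta$, obtaining $\Delta(z)=\varepsilon(1_1h)\Delta(1_2)=\big(\phi\otimes\mathrm{id}\otimes\mathrm{id}\big)\big(\Delta^2(1_H)\big)$, where $\phi(x)=\varepsilon(xh)$ is contracted into the first leg of $\Delta^2(1_H)$. Now expand $\Delta^2(1_H)$ by axiom (v): from $\Delta^2(1_H)=(\Delta(1_H)\otimes 1_H)(1_H\otimes\Delta(1_H))$, with $1_1\otimes 1_2$ and $1_{1'}\otimes 1_{2'}$ two independent copies of $\Delta(1_H)$, one gets $\Delta^2(1_H)=1_1\otimes 1_2 1_{1'}\otimes 1_{2'}$; contracting $\phi$ into the first leg and recombining $\varepsilon(1_1h)1_2=\varepsilon_L(h)=z$ collapses the expression to $z1_{1'}\otimes 1_{2'}=z1_1\otimes 1_2$, the refined formula. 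Using instead $\Delta^2(1_H)=(1_H\otimes\Delta(1_H))(\Delta(1_H)\otimes 1_H)=1_{1'}\otimes 1_1 1_{2'}\otimes 1_2$ and the same contraction yields $\Delta(z)=1_1z\otimes 1_2$. The statements in \eqref{p3-2} follow from the mirror computation: start from $w=\varepsilon_R(h)=1_1\varepsilon(h1_2)$, apply $\Delta$ to the first leg, and expand $\Delta^2(1_H)$ by the same two decompositions, now contracting $x\mapsto\varepsilon(hx)$ into the third leg; this produces both $\Delta(w)=1_1\otimes w1_2$ and $\Delta(w)=1_1\otimes 1_2w$. Alternatively, one may pass to $H^{\mathrm{cop}}$ (routinely a weak bialgebra, with $(H^{\mathrm{cop}})_L=H_R$) and quote \eqref{p3-1}.

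Finally, for the inclusions: given $z\in H_L$, the refined formula $\Delta(z)=z1_1\otimes 1_2$ together with $\Delta(1_H)\in H_R\otimes H_L$ (Lemma~\ref{p2}), which forces $1_2\in H_L$, shows $\Delta(z)\in H\otimes H_L$; symmetrically $\Delta(w)=1_1\otimes 1_2w$ with $1_1\in H_R$ gives $\Delta(H_R)\subset H_R\otimes H$. I expect the only delicate point to be the Sweedler bookkeeping in the forward implications: one must expand $\Delta(1_2)$ (resp.\ $\Delta(1_1)$) using a \emph{fresh} copy of $\Delta(1_H)$ coming from axiom (v) \emph{before} recombining with $\phi$ --- attempting to simplify $\varepsilon(1_1h)1_2$ first discards exactly the information one needs --- and keeping the two copies of $\Delta(1_H)$ notationally distinct throughout is what makes the contraction land on $\varepsilon_L(h)$, respectively $\varepsilon_R(h)$.
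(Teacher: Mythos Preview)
Your argument is correct. The paper itself does not supply a proof of this lemma: it is one of the ``basic properties'' listed with a bare \qed\ and a reference to \cite{BNS,BNS1,CG}, so there is no in-paper proof to compare against. Your approach---contracting a linear functional into the first (resp.\ third) tensorand of $\Delta^2(1_H)$ and then expanding $\Delta^2(1_H)$ via each of the two product decompositions in axiom (v)---is the standard route and is exactly what those references do in the finite-dimensional case; the reverse implications via the counit axiom and the final inclusions via Lemma~\ref{p2} are likewise routine and correctly handled.
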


\begin{lema} \label{p5}
$H_L$ and $H_R$ are subalgebras of $H$ with unit $1_H$. Moreover, if $z \in H_L$ and $w \in H_R$,
we have that $z w = w z$.\qed
\end{lema}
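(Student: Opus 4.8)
The plan is to prove the assertions for $H_L$ and then obtain those for $H_R$ by the mirror argument, using \eqref{p3-2}, \eqref{p1-2} and \eqref{p1-6} in the places where the proof for $H_L$ uses \eqref{p3-1}, \eqref{p1-1} and \eqref{p1-5}. The easy observations come first: $\El(1_H)=\Ve(1_1)1_2=1_H$ by the counit axiom, so $1_H\in H_L$, and since $\El$ is $\Bbbk$-linear, $H_L=\El(H)$ is a $\Bbbk$-subspace of $H$ containing $1_H$; as $1_H$ is the identity element of $H$, it will be a unit for $H_L$ as soon as $H_L$ is closed under the product. So the content of the lemma is (a) $zw=wz$ for $z\in H_L$, $w\in H_R$, and (b) closure of $H_L$ (and of $H_R$) under multiplication; I would prove (a) first and use it in (b).

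Step (a) is the point whose index-bookkeeping needs the most care. Let $z\in H_L$ and $w\in H_R$. By \eqref{p1-1} and \eqref{p1-2} we have $z=\El(z)=\Ve(1_1 z)\,1_2$ and $w=\Er(w)=1_{1'}\,\Ve(w1_{2'})$, with two independent copies of $\Delta(1_H)$, so
\[
zw=\Ve(1_1 z)\,\Ve(w1_{2'})\,1_2 1_{1'},\qquad wz=\Ve(1_1 z)\,\Ve(w1_{2'})\,1_{1'} 1_2 .
\]
On the other hand, axiom (v) asserts that $(\Delta(1_H)\otimes 1_H)(1_H\otimes\Delta(1_H))$ and $(1_H\otimes\Delta(1_H))(\Delta(1_H)\otimes 1_H)$ are both equal to $\Delta^2(1_H)$; expanding each of the two products with two copies of $\Delta(1_H)$ and renaming the summation indices so that the outer legs agree, this is exactly the identity
\[
\sum 1_1\otimes 1_2 1_{1'}\otimes 1_{2'}=\sum 1_1\otimes 1_{1'} 1_2\otimes 1_{2'}
\]
in $H\otimes H\otimes H$. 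Applying to both sides of it the $\Bbbk$-linear map $H^{\otimes 3}\to H$, $x\otimes m\otimes y\mapsto\Ve(xz)\,\Ve(wy)\,m$, gives precisely $zw=wz$.

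For step (b), let $z,z'\in H_L$. By \eqref{p3-1}, $\Delta(z)=1_1 z\otimes 1_2$ and $\Delta(z')=1_{1'}z'\otimes 1_{2'}$, hence by axiom (iii)
\[
\Delta(zz')=\Delta(z)\Delta(z')=1_1\,z\,1_{1'}\,z'\otimes 1_2 1_{2'}.
\]
By Lemma \ref{p2} we may take $1_{1'}\in H_R$, so $z\,1_{1'}=1_{1'}z$ by (a); substituting and using $\sum 1_1 1_{1'}\otimes 1_2 1_{2'}=\Delta(1_H 1_H)=\Delta(1_H)$ we obtain $\Delta(zz')=1_1(zz')\otimes 1_2$, and therefore $zz'\in H_L$ by \eqref{p3-1}. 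The mirror computation — now with \eqref{p3-2} for $w,w'\in H_R$, Lemma \ref{p2} (this time $1_2\in H_L$), (a) and axiom (iii) — yields $\Delta(ww')=1_1\otimes(ww')1_2$, whence $ww'\in H_R$. Consequently $H_L$ and $H_R$ are unital subalgebras of $H$ that commute with one another.
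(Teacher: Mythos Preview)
Your proof is correct. The paper itself does not prove this lemma: it is stated with a terminal \qed\ and listed among the basic weak Hopf identities quoted from the standard references \cite{BNS}, \cite{BNS1}, \cite{CG}. Your argument --- deducing $zw=wz$ directly from axiom (v) by hitting the identity $1_1\otimes 1_21_{1'}\otimes 1_{2'}=1_1\otimes 1_{1'}1_2\otimes 1_{2'}$ with the linear map $x\otimes m\otimes y\mapsto\Ve(xz)\,\Ve(wy)\,m$, and then combining this commutation with Lemma~\ref{p2} and \eqref{p3-1} to show that $H_L$ is multiplicatively closed --- is a clean, self-contained way to fill the gap, and it relies only on results (Lemmas~\ref{p1}--\ref{p3}) that the paper has already recorded prior to this point.
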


\begin{lema}\label{p4}\label{p6}\label{p7}\label{p8}
Let $H$ be a weak Hopf algebra. Then,
\begin{eqnarray}
\ h_1 \otimes h_2 S(h_3) = 1_1 h \otimes 1_2 \label{p4-1}\\
\ S(h_1)h_2 \otimes h_3 = 1_1 \otimes h 1_2 \label{p4-2}\\
h \ \El(k) = \Ve(h_1 k) h_2 \label{p4-3}\\
\Er(h) \ k = k_1 \Ve(h k_2) \label{p4-4}\\
\El(\El(h)k)  = \El(h)\El(k) \label{p6-1}\\
\Er(h\Er(k)) = \Er(h)\Er(k) \label{p6-2}\\
\El(h) = \Ve(S(h) 1_1)1_2 \label{p7-1} \\
\Er(h) = 1_1 \Ve(1_2 S(h))\label{p7-2}\\
\El(h) = S(1_1)\Ve(1_2 h) \label{p8-1}\\
\Er(h) = \Ve(1_1 h) S(1_2) \label{p8-2},
\end{eqnarray} for all $h,k \in H$.\qed
\end{lema}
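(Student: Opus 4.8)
\emph{Proof proposal.} All ten identities belong to the standard toolkit of weak Hopf algebra theory (compare \cites{BNS,BNS1,CG}), and the plan is to establish them one after the other, each being allowed to use the defining axioms, Lemmas \ref{p1}--\ref{p5}, and the previously proved items of this list; crucially, none of the arguments should appeal to finite dimensionality, which is exactly the content of the remark preceding Lemma \ref{p1}. I would organize the work in four blocks: first \eqref{p4-1} and \eqref{p4-2} together with the absorption identities \eqref{p4-3} and \eqref{p4-4} (these come directly from the axioms and are mutually reinforcing); then \eqref{p6-1} and \eqref{p6-2}; and finally the four identities \eqref{p7-1}--\eqref{p8-2} that express $\El$ and $\Er$ through the antipode.

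For \eqref{p4-1}, the first step is to notice that, by coassociativity, $(h_2)_1\otimes(h_2)_2=h_2\otimes h_3$, so axiom (vi) applied to $h_2$ gives $h_2S(h_3)=\El(h_2)$, whence the left-hand side equals $h_1\otimes\El(h_2)=\Ve(1_1h_2)h_1\otimes1_2$; this last expression is then rearranged into $1_1h\otimes1_2$ using the weak counit axiom (iv), the relation $\Delta(h)=\Delta(1_H)\Delta(h)$, and Lemmas \ref{p2} and \ref{p3}. Identity \eqref{p4-2} is the mirror image obtained from axiom (vii), and \eqref{p4-3}, \eqref{p4-4} are proved by a similar (but shorter) manipulation, expanding $\El(k)=\Ve(1_1k)1_2$ inside $h\El(k)$ and re-collecting the coproduct of $h$, using also the commutation $H_LH_R=H_RH_L$ of Lemma \ref{p5}.

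Once these are in hand, \eqref{p6-1} follows quickly: setting $z=\El(h)\in H_L$, Lemma \ref{p3}, \eqref{p3-1}, gives $\Delta(z)=1_1z\otimes1_2$, so applying the linear map $a\otimes b\mapsto\Ve(ak)b$ to this identity and then invoking \eqref{p4-3} yields $\El(zk)=\Ve(z_1k)z_2=z\El(k)$, which is \eqref{p6-1}; identity \eqref{p6-2} is dual, using \eqref{p4-4} and \eqref{p3-2}.

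The last block, \eqref{p7-1}--\eqref{p8-2}, is where I expect the real obstacle to lie. These identities genuinely use axiom (viii) --- together with the anti-multiplicativity and anti-comultiplicativity of $S$, which are not among Lemmas \ref{p1}--\ref{p5} and which I would therefore record first, by the same kind of elementary argument. Starting from $\El(h)=h_1S(h_2)$, rewriting $S$ via axiom (viii), and using $\Delta(1_H)\in H_R\otimes H_L$, one reaches \eqref{p7-1}; for \eqref{p8-1} and \eqref{p8-2} the extra input is that $S$ maps $H_R$ onto $H_L$ and $H_L$ onto $H_R$. The difficulty is not conceptual but hygienic: several proofs of these formulas in the finite-dimensional literature use dual-basis (hence dimension) arguments, so the work is to replace those by manipulations carried out purely inside $H$, $H_L$, $H_R$ and $\Delta(1_H)$; once this is done, the companion statements for $\Er$ follow by the evident left--right symmetry.
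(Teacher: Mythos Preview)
The paper gives no proof of this lemma at all: it is stated with a \qed, the authors having already remarked (just before Lemma~\ref{p1}) that these identities are standard and that the finite-dimensional arguments in \cites{BNS,BNS1,CG} go through unchanged. Your proposal therefore supplies far more than the paper does, and there is no ``paper's approach'' to compare against.

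Your roadmap is sound and matches the standard treatment. A couple of small remarks. For \eqref{p4-1} you list axiom (iv), $\Delta(h)=\Delta(1_H)\Delta(h)$, and Lemmas~\ref{p2}--\ref{p3} as the tools; in practice the clean route uses axiom (v) (the cocycle-type identity for $\Delta^2(1_H)$) together with the counit axiom rather than Lemmas~\ref{p2}--\ref{p3}, so you may want to adjust the ingredient list when you write it out. Your derivation of \eqref{p6-1} from \eqref{p4-3} and \eqref{p3-1} is exactly right. For the last block you propose to first record the anti-(co)multiplicativity of $S$; note that these are precisely \eqref{p11-1}--\eqref{p11-2} of the \emph{next} lemma in the paper, and likewise the fact that $S$ swaps $H_L$ and $H_R$ is Lemma~\ref{p10}. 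Since the paper presents Lemmas~\ref{p1}--\ref{p10} as a single block of unproved preliminaries, this reordering is harmless, but it does mean you are really proving the whole package together rather than Lemma~\ref{p4} in isolation --- which is in fact how these results are handled in the references the paper cites.
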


\vu

\begin{lema}\label{p9}\label{p11}\label{p12}
Let $H$ be a weak Hopf algebra. Then,
\begin{eqnarray}
\El \circ S = \El \circ \Er = S \circ \Er \label{p9-1} \\
\Er \circ S = \Er \circ \El = S \circ \El \label{p9-2}\\
		S(1_1) \otimes S(1_2) = 1_2 \otimes 1_1 \label{p11-0}\\
		S( h k ) = S(k) S(h) \label{p11-1}\\
		S(h)_1 \otimes S(h)_2 = S(h_2) \otimes S(h_1) \label{p11-2}\\
		\Ve \circ S = \Ve \label{p11-3}\\
		S(1_H) = 1_H \label{p11-4}\\
		h_1 \otimes S(h_2) \ h_3 = h \ 1_1 \otimes S(1_2) \label{p12-1} \\
		h_1 \ S(h_2) \otimes h_3 = S(1_1) \otimes 1_2 \ h \label{p12-2},
	\end{eqnarray} for all $h, k\in H$. \qed
\end{lema}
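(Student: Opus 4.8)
The plan is to establish all of these identities in the same way as in the finite-dimensional theory (compare \cite{BNS}, \cite{BNS1} and \cite{CG}); none of those arguments actually uses finite-dimensionality, so they carry over verbatim once Lemmas \ref{p1}--\ref{p4} are available. The only structural fact needed is that, for any $\Bbbk$-coalgebra $C$ and any $\Bbbk$-algebra $A$, the space $\mathrm{Hom}_\Bbbk(C,A)$ is an associative unital algebra under the convolution product; I would use it with $(C,A)=(H,H)$, $(H\otimes H,H)$ and $(H,H\otimes H)$. Rewriting axioms (vi)--(viii) as the convolution relations $\mathrm{id}*S=\El$, $S*\mathrm{id}=\Er$ and $S*\mathrm{id}*S=S$ in $\mathrm{Hom}_\Bbbk(H,H)$, one first records a uniqueness principle: a linear map $S'\colon H\to H$ satisfying these same three relations must equal $S$, since
\begin{align*}
S' &= S'*\mathrm{id}*S' = (S'*\mathrm{id})*S' = \Er*S' = (S*\mathrm{id})*S' \\
   &= S*(\mathrm{id}*S') = S*\El = S*(\mathrm{id}*S) = S*\mathrm{id}*S = S .
\end{align*}
The transported versions of this principle in $\mathrm{Hom}_\Bbbk(H\otimes H,H)$ and $\mathrm{Hom}_\Bbbk(H,H\otimes H)$ are what will force the explicit candidate maps below to be the antipode.

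The relations between $S$, $\El$ and $\Er$ in \eqref{p9-1} and \eqref{p9-2}, the equality $\Ve\circ S=\Ve$ in \eqref{p11-3}, the normalization $S(1_H)=1_H$ in \eqref{p11-4}, and \eqref{p11-0} are obtained by feeding the closed-form expressions for $\El$ and $\Er$ collected in Lemma \ref{p4} into one another and using linearity of $S$ together with the counit axiom. For instance, applying $S$ to \eqref{p7-2} and comparing with \eqref{p8-1} gives $S\circ\Er=\El\circ S$ immediately; applying $\Ve$ to \eqref{p7-1} and contracting the counit gives \eqref{p11-3}; and once \eqref{p9-2} is in hand, evaluating it at $1_H$ and using $\El(1_H)=\Er(1_H)=1_H$ yields $S(1_H)=1_H$, whence \eqref{p11-0} follows by comultiplying $S(1_H)$.

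The two substantial identities are the anti-multiplicativity \eqref{p11-1} and the anti-comultiplicativity \eqref{p11-2} of $S$. For \eqref{p11-1} I would work in $\mathrm{Hom}_\Bbbk(H\otimes H,H)$ and compare the three elements $m\colon h\otimes k\mapsto hk$, $\rho\colon h\otimes k\mapsto S(k)S(h)$ and $\lambda\colon h\otimes k\mapsto S(hk)$. Using the multiplicativity of $\Delta$ (axiom (iii)), axioms (vi)--(viii), Lemma \ref{p5} (so that $\El$- and $\Er$-terms commute) and the mixed identities \eqref{p4-1}--\eqref{p4-4}, one checks that $\rho$ and $\lambda$ satisfy the same system of convolution equations expressing that each is a two-sided weak convolution inverse of $m$; the uniqueness principle transported to $\mathrm{Hom}_\Bbbk(H\otimes H,H)$ then gives $\rho=\lambda$, which is \eqref{p11-1}. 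Dually, for \eqref{p11-2} one compares $\Delta\circ S$ with the map $h\mapsto S(h_2)\otimes S(h_1)$ inside $\mathrm{Hom}_\Bbbk(H,H\otimes H)$ (convolution induced by $\Delta_H$ on the source and by $m_{H\otimes H}$ on the target), using \eqref{p11-1} and axiom (iii) to verify that both are two-sided weak convolution inverses of $\Delta$, hence coincide. Finally, \eqref{p12-1} and \eqref{p12-2} are formal consequences of \eqref{p11-2} together with \eqref{p4-1}, \eqref{p4-2}, Lemma \ref{p3} and coassociativity: one comultiplies the appropriate tensorand and reorganizes.

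The main obstacle is the bookkeeping inside \eqref{p11-1} and \eqref{p11-2}: identifying the correct weak-counit-type maps $m*\rho$ and $\rho*m$ (equivalently $\Delta*(\Delta\circ S)$ and $(\Delta\circ S)*\Delta$) and checking the full system of convolution identities is the one place where the \emph{weak} axioms genuinely complicate the classical computation, since the $\El$- and $\Er$-terms no longer vanish and must be carried along via Lemma \ref{p4}. Everything else in the statement is routine Sweedler-notation manipulation on top of Lemmas \ref{p1}--\ref{p4}.
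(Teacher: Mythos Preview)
Your sketch is correct and matches the paper's own treatment, which is simply to omit the proof (the lemma ends with \qed) and rely on the remark preceding Lemma~\ref{p1} that the standard finite-dimensional arguments from \cite{BNS}, \cite{BNS1}, \cite{CG} carry over verbatim; your convolution-algebra outline is precisely that argument. One minor slip: deducing \eqref{p11-0} by ``comultiplying $S(1_H)$'' tacitly invokes \eqref{p11-2}, so either reorder those two or derive \eqref{p11-0} directly from \eqref{p9-1}--\eqref{p9-2} together with Lemma~\ref{p2}.
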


The proof of the following lemma is a direct verification.

\begin{lema}\label{p10}
	Let $H$ be a weak Hopf algebra with antipode $S$. Then, $S(H_L) = H_R$ and $S(H_R) = H_L$. Moreover, $S_L=S|_{_{H_L}}$ and
$S_R=S|_{_{H_R}}$ are bijections between $H_L$ and $H_R$.\qed
\end{lema}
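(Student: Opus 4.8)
The plan is to first establish the two inclusions $S(H_L)\subseteq H_R$ and $S(H_R)\subseteq H_L$, and then promote them to equalities, together with bijectivity, by combining injectivity of $S$ on each piece with the finite dimensionality of $H_L$ and $H_R$.

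First I would record that, by \eqref{p1-1} and \eqref{p1-2}, $\El$ and $\Er$ are idempotent, hence act as the identity on their images; in particular $\El(z)=z$ for every $z\in H_L$ and $\Er(w)=w$ for every $w\in H_R$. Then, for $z\in H_L$, the mixed identity $\Er\circ S=S\circ\El$ of \eqref{p9-2} gives $\Er(S(z))=S(\El(z))=S(z)$, so $S(z)\in\Er(H)=H_R$; symmetrically, for $w\in H_R$, the identity $\El\circ S=S\circ\Er$ of \eqref{p9-1} gives $\El(S(w))=S(\Er(w))=S(w)$, so $S(w)\in H_L$. Thus $S$ restricts to maps $S_L\colon H_L\to H_R$ and $S_R\colon H_R\to H_L$.

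Next I would check injectivity of both restrictions. Applying \eqref{p7-1} to $z\in H_L$ and using $\El(z)=z$ gives $z=\Ve(S(z)1_1)1_2$, so $S(z)=0$ implies $z=0$; likewise \eqref{p7-2} gives $w=1_1\Ve(1_2S(w))$ for $w\in H_R$, so $S(w)=0$ implies $w=0$. Since $H_L$ and $H_R$ are finite dimensional over $\Bbbk$, the injective linear maps $S_L\colon H_L\to H_R$ and $S_R\colon H_R\to H_L$ force $\dim_\Bbbk H_L\le\dim_\Bbbk H_R\le\dim_\Bbbk H_L$, so $\dim_\Bbbk H_L=\dim_\Bbbk H_R$ and $S_L$, $S_R$ are bijections. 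In particular $S(H_L)=H_R$ and $S(H_R)=H_L$, as claimed.

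The two inputs that do real work here are the ``mixed'' antipode identities of Lemma~\ref{p9}, which are what make $S$ send each of $H_L$, $H_R$ into the other one rather than back into itself, and the finite dimensionality of $H_L$ and $H_R$, which is what upgrades injectivity to surjectivity. I expect the latter to be the point a reader might worry about: without it one would instead need to show that $S^2$ restricts to the identity on $H_L$ and on $H_R$, which is a somewhat longer computation.
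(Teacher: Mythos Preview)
Your argument is correct. The paper itself gives no explicit proof here---it simply remarks before the lemma that it ``is a direct verification'' and appends a \qed---so there is no detailed proof in the paper to compare against. Your verification is exactly of the sort intended: the inclusions come from \eqref{p9-1}--\eqref{p9-2}, injectivity from \eqref{p7-1}--\eqref{p7-2}, and bijectivity from the finite dimensionality of $H_L$ and $H_R$ already recorded in Section~\ref{wpaction}. One small remark: from \eqref{p9-1} and \eqref{p9-2} you could equally read off $S|_{H_R}=\El|_{H_R}$ and $S|_{H_L}=\Er|_{H_L}$ directly (the middle terms $\El\circ\Er$, $\Er\circ\El$), which gives the inclusions in one stroke; but this is only a cosmetic variation on what you wrote.
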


It is well-known that, for a weak bialgebra	$H$, $H_L$ and $H_R$ are Frobenius separable $\Bbbk$-algebras with separability idempotents
$$e_L = \El(1_1) \otimes 1_2 $$
and
$$e_R = 1_1 \otimes \Er(1_2) .$$

For a weak Hopf algebra $H$, taking into account that $\Delta(1) \in H_R \otimes H_L$, the idempotents become
$$e_L = S(1_1) \otimes 1_2 $$
and
$$e_R = 1_1 \otimes S(1_2)$$
by (\ref{p9-1}) and (\ref{p9-2}).

\vu

Then, the following result follows immediately from the central condition for separability idempotents.

\begin{lema}\label{p14}
	For all $z \in H_L$ and $w \in H_R$, we have:
	\begin{eqnarray}
	z S(1_1) \otimes 1_2 = S(1_1) \otimes 1_2 z \label{p14-1} \\
	1_1 \otimes S(1_2) w = w 1_1 \otimes S(1_2) \label{p14-2}
	\end{eqnarray}
\end{lema}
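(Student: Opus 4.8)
The plan is to read both identities as direct instances of the defining centrality property of a separability idempotent, applied to the algebras $H_L$ and $H_R$ equipped with the idempotents $e_L = S(1_1)\otimes 1_2$ and $e_R = 1_1\otimes S(1_2)$ recalled just above the statement. Recall that if $e=\sum_i a_i\otimes b_i$ is a separability idempotent of a $\Bbbk$-algebra $A$, then $\sum_i x a_i\otimes b_i = \sum_i a_i\otimes b_i x$ for every $x\in A$, this being the condition $xe=ex$ for the natural $A$-bimodule structure on $A\otimes A$.

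First I would check that these idempotents genuinely lie in $H_L\otimes H_L$ and $H_R\otimes H_R$, respectively, so that the centrality relation is literally the asserted one. By Lemma \ref{p2} we have $\Delta(1_H)\in H_R\otimes H_L$, hence $1_1\in H_R$ and $1_2\in H_L$. Then \eqref{p9-1} (equivalently Lemma \ref{p10}) gives $S(1_1)=\El(1_1)\in H_L$ since $1_1\in H_R$, and \eqref{p9-2} gives $S(1_2)=\Er(1_2)\in H_R$ since $1_2\in H_L$; this also matches the identifications $\El(1_1)\otimes 1_2 = S(1_1)\otimes 1_2$ and $1_1\otimes\Er(1_2)=1_1\otimes S(1_2)$ recorded before the lemma. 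Thus $e_L\in H_L\otimes H_L$ and $e_R\in H_R\otimes H_R$.

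Now I would simply apply the central condition. Taking $A=H_L$, $e=e_L=S(1_1)\otimes 1_2$ and $x=z\in H_L$ yields $z\,S(1_1)\otimes 1_2 = S(1_1)\otimes 1_2\,z$, which is \eqref{p14-1}; taking $A=H_R$, $e=e_R=1_1\otimes S(1_2)$ and $x=w\in H_R$ yields $w\,1_1\otimes S(1_2) = 1_1\otimes S(1_2)\,w$, which is \eqref{p14-2}. There is essentially no obstacle: all the substance sits in the (standard) fact, stated just above, that $H_L$ and $H_R$ are separable with the displayed idempotents, and the only point needing a line of care is the bookkeeping of the preceding paragraph, ensuring that the factors of $e_L$ and $e_R$ land in $H_L$ and $H_R$ so that multiplying the idempotent by an element of $H_L$, resp. $H_R$, makes sense and reproduces exactly the two stated equalities.
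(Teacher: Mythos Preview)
Your proposal is correct and follows exactly the paper's approach: the paper states that the result ``follows immediately from the central condition for separability idempotents,'' using $e_L=S(1_1)\otimes 1_2$ and $e_R=1_1\otimes S(1_2)$, and you have spelled out precisely this argument, including the verification that $e_L\in H_L\otimes H_L$ and $e_R\in H_R\otimes H_R$.
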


As a consequence of the above result, we obtain directly the following one.

\begin{lema}\label{p13}
	For all $z \in H_L$ and $w \in H_R$, we have:
	\begin{eqnarray}
		1_1 S_R^{-1}(z) \otimes 1_2 = 1_1 \otimes 1_2 z \label{p13-1}\\
		1_1 \otimes S_L^{-1}(w) 1_2 = w 1_1 \otimes 1_2 \label{p13-2}
	\end{eqnarray}
\end{lema}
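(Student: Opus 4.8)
The plan is to deduce Lemma~\ref{p13} from Lemma~\ref{p14} by applying, to the appropriate tensor factor, the inverse of the restricted antipode. Concretely, for \eqref{p13-1} I would start from \eqref{p14-1}, namely $z S(1_1) \otimes 1_2 = S(1_1) \otimes 1_2 z$ for $z \in H_L$, and apply the map $S_R^{-1} \otimes \mathrm{id}$ to both sides. By Lemma~\ref{p10}, $S_R^{-1}$ is a well-defined linear bijection $H_R \to H_L$, and since by Lemma~\ref{p2} we have $\Delta(1_H) \in H_R \otimes H_L$, both $1_1$ and $S(1_1)$ lie in $H_R$, so applying $S_R^{-1}$ to the first factor makes sense. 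The left-hand side becomes $S_R^{-1}(z S(1_1)) \otimes 1_2$; here $z \in H_L$, $S(1_1) \in H_R$, and by Lemma~\ref{p5} elements of $H_L$ and $H_R$ commute, so $z S(1_1) = S(1_1) z$, but more importantly I need $S_R^{-1}$ to convert $S(1_1)$ back into $1_1$.

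The cleaner route is probably to rewrite \eqref{p14-1} first as $S(1_1) z \otimes 1_2 = S(1_1) \otimes 1_2 z$ (using commutativity of $H_L$ with $H_R$), but that still leaves a product inside the argument of $S_R^{-1}$. So instead I would use that $S_R\colon H_R \to H_L$ is not just a linear bijection but an anti-isomorphism of algebras (by \eqref{p11-1} restricted, together with $S(1_H)=1_H$ from \eqref{p11-4}), hence $S_R^{-1}\colon H_L \to H_R$ is also an anti-isomorphism of algebras. Wait — I must be careful about domains: in \eqref{p14-1} the first tensor factor $z S(1_1)$ is a product of an element of $H_L$ and an element of $H_R$, which by Lemma~\ref{p5} is central-ish but need not lie in $H_L$ or in $H_R$ individually in an obvious way. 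The trick is to recognize, via Lemma~\ref{p3}, that $\Delta(z) = z1_1 \otimes 1_2$ with $z1_1 \in H_R$? No: Lemma~\ref{p3} says for $z \in H_L$, $\Delta(z) = 1_1 z \otimes 1_2 = z 1_1 \otimes 1_2$ and $\Delta(H_L) \subset H \otimes H_L$. So the right strategy: apply $S \otimes \mathrm{id}$ to the identity \eqref{p4-2} or start from \eqref{p14-1} and use that $S$ restricted to $H_R$ composed with the counit structure recovers $1_1$.

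Here is the concrete plan for \eqref{p13-1}. Apply $S_R^{-1} \otimes \mathrm{id}$ to \eqref{p14-1}. On the right side I get $S_R^{-1}(S(1_1)) \otimes 1_2 z = 1_1 \otimes 1_2 z$, using $S_R^{-1} \circ S_R = \mathrm{id}_{H_R}$ together with $1_1 \in H_R$. On the left side I get $S_R^{-1}(z\, S(1_1)) \otimes 1_2$; using that $S_R^{-1}$ reverses products (anti-isomorphism) this is $S_R^{-1}(S(1_1))\, S_R^{-1}(z) \otimes 1_2 = 1_1\, S_R^{-1}(z) \otimes 1_2$, where I also used that $z \in H_L$ is in the domain of $S_R^{-1}$. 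This yields exactly $1_1 S_R^{-1}(z) \otimes 1_2 = 1_1 \otimes 1_2 z$, which is \eqref{p13-1}. The argument for \eqref{p13-2} is entirely symmetric: apply $\mathrm{id} \otimes S_L^{-1}$ to \eqref{p14-2}, using that $S_L^{-1}\colon H_R \to H_L$ is an anti-isomorphism, that $1_2 \in H_L$ lies in its domain, and that $w \in H_R$; the right side gives $w1_1 \otimes S_L^{-1}(S(1_2)) = w1_1 \otimes 1_2$ and the left side gives $1_1 \otimes S_L^{-1}(w)\, S_L^{-1}(S(1_2)) = 1_1 \otimes S_L^{-1}(w) 1_2$.

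I expect the main obstacle to be purely bookkeeping: making sure every element to which $S_R^{-1}$ (resp.\ $S_L^{-1}$) is applied genuinely lies in $H_R$ (resp.\ $H_L$), and that the anti-multiplicativity of the restricted inverse antipode is legitimate — this rests on Lemma~\ref{p10} (the restrictions are bijections $H_L \leftrightarrow H_R$), Lemma~\ref{p2} (so the legs of $\Delta(1_H)$ are in the right subalgebras), Lemma~\ref{p5} (commutation, ensuring products like $z S(1_1)$ are handled consistently), and \eqref{p11-1} with \eqref{p11-4} (anti-multiplicativity and $S(1_H)=1_H$). None of these steps is computationally heavy; the care is entirely in tracking which subalgebra each factor belongs to before inverting the antipode.
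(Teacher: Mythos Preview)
Your concrete plan is correct and is exactly what the paper intends: it states only that Lemma~\ref{p13} is obtained ``directly'' from Lemma~\ref{p14}, and applying $S_R^{-1}\otimes\mathrm{id}$ (resp.\ $\mathrm{id}\otimes S_L^{-1}$) to \eqref{p14-1} (resp.\ \eqref{p14-2}) together with anti-multiplicativity of the restricted antipode is the natural way to carry this out. One small slip in your exploratory paragraph worth cleaning up: $S_R^{-1}$ goes $H_L\to H_R$ (not $H_R\to H_L$) and $S(1_1)\in H_L$ (not $H_R$); your final computation already has these directions right.
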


\subsection{Partial actions}

\vu

Hereafter, all actions of a weak Hopf algebra on any algebra will be considered only on the left side.
Actions on the right side can be defined in a similar way, and corresponding results similar to the ones
we will deal with along this text can be obtained as well.
Recall that in this paper all algebras are assumed to
be associative and unital, unless otherwise stated. Furthermore, in order to avoid confusion,
we will always denote by $\cdot$ any partial action and by $\triangleright$ any global one (see, in particular, Section 5). Throughout,
$H$ will always denote a weak Hopf algebra, without any more explicit mention, unless otherwise required.
\vu

The usual definition for (global) actions of weak Hopf algebras on algebras is the following.

\begin{dfn}
Let $A$ be an algebra. A (global) \emph{action} of $H$ on $A$ is a
$\Bbbk$-linear map $\triangleright\colon H \otimes A \to A$
such that the following properties hold for all $a,b \in A$ and $h,k \in H$:

\vu

\begin{itemize}

\item[(i)] $1_H \triangleright a = a$,

\vu

\item[(ii)] $h \triangleright ab = (h_1 \triangleright a)(h_2 \triangleright b)$,

\vu

\item[(iii)] $h \triangleright (k \triangleright a) = hk \triangleright a$.
\end{itemize}
\end{dfn}
\nd In this case, $A$ is called an \emph{$H$-module algebra}.
\vu

Note that in this definition we do not need to require $A$ to be unital. However, in the literature we find the
definition of an action of a weak bialgebra H on an algebra $A$ with unit $1_A$, where the conditions (i)-(iii) have to be satisfied,
as well as the fourth condition $$h \triangleright 1_A = \varepsilon_{{}_{L}}(h) \triangleright 1_A.$$
Nevertheless, in the case that $H$ is a weak Hopf algebra, this fourth condition is
implied by the three previous ones, as we will see in the following lemma.

\vu

\begin{lema}\label{condexcpma}
Let $A$ be an $H$-module algebra. Then, $$h \triangleright 1_A = \varepsilon_{{}_{L}}(h) \triangleright 1_A,$$ for all $h \in H$.
\end{lema}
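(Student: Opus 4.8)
The plan is to start from the module algebra axioms and exploit the defining formula $\El(h)=\Ve(1_1 h)1_2$ together with the coalgebra/weak-bialgebra structure. First I would write $h\triangleright 1_A$ using the fact that $\Delta(1_H)=1_1\otimes 1_2$ splits the unit: since $1_A=1_A\cdot 1_A$ and $1_H\triangleright 1_A=1_A$, I can insert $\Delta(1_H)$ and use axiom (ii) to get $1_A=(1_1\triangleright 1_A)(1_2\triangleright 1_A)$. More to the point, I would compute $h\triangleright 1_A$ by first writing $h = h\cdot 1_H$ acting, i.e.\ $h\triangleright 1_A = h\triangleright(1_H\triangleright 1_A)$, but the cleaner route is to use that $1_A$ is idempotent: $h\triangleright 1_A = h\triangleright(1_A 1_A) = (h_1\triangleright 1_A)(h_2\triangleright 1_A)$.

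The key step is then to identify $(h_1\triangleright 1_A)(h_2\triangleright 1_A)$ with something involving $\El$. I would introduce the element $1_1\triangleright 1_A \otimes 1_2$ (living in $A\otimes H$) and try to show that for every $h$, $h\triangleright 1_A = \Ve(1_1 h)(1_2\triangleright 1_A)$, which by the definition $\El(h)=\Ve(1_1 h)1_2$ would give exactly $h\triangleright 1_A = \El(h)\triangleright 1_A$ once I know $\El(h)\triangleright 1_A = \Ve(1_1 h)(1_2\triangleright 1_A)$ by $\Bbbk$-linearity. To get the displayed intermediate identity I expect to need the antipode: using axiom (iii) and (vi), $(h_1\triangleright 1_A)(S(h_2)\triangleright 1_A)$ should collapse via $h_1 S(h_2)=\El(h)$, but that requires first knowing something like $a(h\triangleright 1_A) = \dots$, so instead I would argue directly. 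Concretely, write $h\triangleright 1_A = (h_1\triangleright 1_A)(h_2\triangleright 1_A)$ and then apply $1_H\triangleright$ on the left of the second factor and use $h_2 = h_2 1_H$, $\Delta(1_H)=1_1\otimes 1_2$, axiom (ii) and axiom (iii) to massage the product; the relation \eqref{p4-3}, $h\,\El(k)=\Ve(h_1 k)h_2$, applied with appropriate substitution should reduce $(h_1\triangleright 1_A)(h_2\triangleright 1_A)$ to $\Ve(h_1 1_1)(h_2 1_2 \triangleright 1_A)$-type expressions and then to $\El(h)\triangleright 1_A$.

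A more streamlined argument I would actually carry out: compute $\El(h)\triangleright 1_A$. Since $\El(h)=\Ve(1_1 h)1_2\in H_L$ and $\Delta(\El(h)) = 1_1\El(h)\otimes 1_2$ by \eqref{p3-1} — wait, I must be careful whether I want $\Delta$ of $\El(h)$ or to act it — I would instead use that $\El(h) = h_1 S(h_2)$ from axiom (vi), so $\El(h)\triangleright 1_A = (h_1 S(h_2))\triangleright 1_A = h_1\triangleright(S(h_2)\triangleright 1_A) = (h_1\triangleright 1_A)\,\text{(applied to) }$. Hmm, the cleanest is: $\El(h)\triangleright 1_A \overset{(vi)}{=} (h_1 S(h_2))\triangleright 1_A \overset{(iii)}{=} h_1\triangleright(S(h_2)\triangleright 1_A)$, and on the other side $h\triangleright 1_A = h\triangleright(1_A 1_A) = h\triangleright((S\text{ of something})\cdots)$. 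The genuinely efficient path is to show both sides equal $(h_1\triangleright 1_A)(S(h_2)\triangleright 1_A)$: for the right side use \eqref{p4-1}, $h_1\otimes h_2 S(h_3) = 1_1 h\otimes 1_2$, applied after expanding, combined with axiom (ii) to recombine $h_1\triangleright 1_A$ and $(h_2 S(h_3))\triangleright$-pieces.

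The main obstacle is bookkeeping the coassociativity: one must expand $h\triangleright 1_A$ far enough (applying $\Delta$ twice and inserting $\Delta(1_H)$ via the idempotency $1_A^2 = 1_A$) so that relation \eqref{p4-1} or equivalently axioms (vi) and (iii) can be applied to fold three tensor legs back into one, landing on $\El(h)\triangleright 1_A$. Getting the placement of the $\Delta(1_H)$'s right so that \eqref{p4-1} (resp.\ \eqref{p4-3}) applies cleanly — rather than producing a stray $\El$ or $\Er$ in the wrong slot — is where care is needed; once the correct expansion is set up, everything else is a mechanical application of the module-algebra axioms, counitality, and Lemma~\ref{p6}. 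I would therefore organize the proof as: (1) $h\triangleright 1_A = (h_1\triangleright 1_A)(h_2\triangleright 1_A)$ by idempotency of $1_A$; (2) insert $\Delta(1_H)$ suitably and use \eqref{p4-1} or (vi) plus (iii); (3) simplify with counit to reach $\El(h)\triangleright 1_A$.
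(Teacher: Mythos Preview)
Your mid-proposal ``genuinely efficient path'' is exactly the paper's proof: start from $\El(h)\triangleright 1_A \overset{(\mathrm{vi})}{=} (h_1 S(h_2))\triangleright 1_A \overset{(\mathrm{iii})}{=} h_1\triangleright(S(h_2)\triangleright 1_A)$, insert a harmless $1_A$ and apply (ii) to get $(h_1\triangleright 1_A)(h_2 S(h_3)\triangleright 1_A)$, then use \eqref{p4-1} to rewrite this as $(1_1 h\triangleright 1_A)(1_2\triangleright 1_A)$, and finally recombine via (iii), (ii), (i) to reach $h\triangleright 1_A$. So the core argument you found is correct and coincides with the paper's.

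However, the three-step summary you give at the end does \emph{not} work as stated, and you should not organize the write-up that way. Starting from $h\triangleright 1_A = (h_1\triangleright 1_A)(h_2\triangleright 1_A)$ gives you nothing to which \eqref{p4-1} can be applied: that relation requires a factor of the form $h_2 S(h_3)$, and no antipode has appeared yet. ``Inserting $\Delta(1_H)$'' into $(h_1\triangleright 1_A)(h_2\triangleright 1_A)$ just returns you to $h\triangleright 1_A$. The antipode enters only through $\El(h)=h_1 S(h_2)$, so the computation has to begin on the $\El(h)\triangleright 1_A$ side (or, equivalently, you must expand $\El(h)$ first), exactly as in your earlier ``efficient path'' and in the paper. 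Also note a small slip: the common intermediate expression is $(h_1\triangleright 1_A)(h_2 S(h_3)\triangleright 1_A)$, not $(h_1\triangleright 1_A)(S(h_2)\triangleright 1_A)$.
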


\begin{proof} In fact,
$$
	\begin{array}{rcl}
		\varepsilon_{{}_{L}}(h)\triangleright 1_{A}&=&h_1S(h_2)\triangleright 1_{A}\\
		&\overset{\text{(iii)}}{=}&h_1 \triangleright (1_{A}(S(h_2)\triangleright 1_{A}))\\
		&\overset{\text{(ii)}}{=}&(h_1\triangleright 1_{A})(h_2S(h_3)\triangleright 1_{A})\\
		&\overset{\eqref{p4-1}}{=} &(1_1h\triangleright 1_{A})(1_2\triangleright 1_{A})\\
	    &\overset{\text{(iii)}}{=}&(1_1\triangleright(h\triangleright 1_A))(1_2\triangleright 1_{A})\\
		&\overset{\text{(ii)}}{=}&1_H\triangleright(h\triangleright 1_{A}) 1_{A}\\
		&\overset{\text{(i)}}{=}&h\triangleright 1_{A}.
	\end{array}
$$
\end{proof}	
In the setting of partial actions we have the following.

\begin{dfn}\label{acaoparcial}
Let $A$ be an algebra. A \emph{partial action}
of $H$ on $A$ is a $\Bbbk$-linear map
$\cdot\colon H \otimes A \to A$ such that the following properties hold for all $a,b \in A$ and $h,k \in H$:

\vu

\begin{itemize}

\item[(i)] $1_H \cdot a = a$,

\vu

\item[(ii)] $h \cdot ab = (h_1 \cdot a)(h_2 \cdot b)$,

\vu

\item[(iii)] $h \cdot (k \cdot a) = (h_1 \cdot 1_A)(h_2k \cdot a)$.

\end{itemize}

In this case $A$ is called a \emph{partial $H$-module algebra}.

\vu

Moreover, we say that $\cdot$ is \emph{symmetric} (or, $A$ is a \emph{symmetric partial $H$-module algebra}) if the additional condition also holds:

\begin{itemize}

\item[(iv)] $h \cdot (k \cdot a) = (h_1 k \cdot a)(h_2 \cdot 1_A)$.
\end{itemize}
\end{dfn}

\begin{obs}\label{nonunital}
	Observe that, assuming the condition (i), the conditions (ii) and (iii) in Definition \ref{acaoparcial} are
	equivalent to $$h \cdot (a(k \cdot b)) = (h_1 \cdot a)(h_2k \cdot b).$$
	
	In a similar way, conditions (ii) and (iv) are equivalent to $$h \cdot ((k \cdot a)b) = (h_1k \cdot a)(h_2 \cdot b).$$
\end{obs}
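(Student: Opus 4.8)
The plan is to prove both equivalences by straightforward manipulations using conditions (i)--(iii) (resp. (i), (ii), (iv)) in one direction, and by specializing the single combined identity in the other. I will treat the first equivalence in detail; the second follows by an entirely symmetric argument.

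First, assume (ii) and (iii) and derive $h \cdot (a(k \cdot b)) = (h_1 \cdot a)(h_2 k \cdot b)$. Applying (ii) to the product $a(k\cdot b)$ gives $h \cdot (a(k\cdot b)) = (h_1 \cdot a)(h_2 \cdot (k \cdot b))$, and then (iii) rewrites the second factor as $h_2 \cdot (k\cdot b) = (h_2 \cdot 1_A)(h_3 k \cdot b)$. Thus $h\cdot(a(k\cdot b)) = (h_1\cdot a)(h_2\cdot 1_A)(h_3 k\cdot b)$, and it remains to absorb the middle factor: since $(h_1 \cdot a)(h_2 \cdot 1_A) = h_1 \cdot (a 1_A) = h_1 \cdot a$ by (ii), we recover the desired identity. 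Conversely, assume the combined identity. Taking $a = 1_A$ and $k = 1_H$ and using (i), the left side becomes $h\cdot(1_A b) = h\cdot b$ and the right side becomes $(h_1 \cdot 1_A)(h_2 \cdot b)$; but we also want (ii), so instead take $k = 1_H$ (keeping $a$ general): the left side is $h\cdot(a b)$ and the right side is $(h_1 \cdot a)(h_2 \cdot b)$, which is exactly (ii). Then, taking $a = 1_A$ in the combined identity and using (ii) just obtained to simplify $(h_1 \cdot 1_A)(h_2 k \cdot b)$... actually more directly, with $a=1_A$ the identity reads $h\cdot(k\cdot b) = (h_1\cdot 1_A)(h_2 k\cdot b)$, which is precisely (iii). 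So (ii) and (iii) are recovered.

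For the second equivalence, the argument is the mirror image: assuming (ii) and (iv), apply (ii) to $(k\cdot a)b$ to get $(h_1 \cdot (k\cdot a))(h_2 \cdot b)$, then (iv) to expand $h_1 \cdot (k \cdot a) = (h_1 k \cdot a)(h_2 \cdot 1_A)$, yielding $(h_1 k \cdot a)(h_2 \cdot 1_A)(h_3 \cdot b)$, and finally absorb the last two factors via (ii): $(h_2 \cdot 1_A)(h_3 \cdot b) = h_2 \cdot b$. Conversely, setting $k = 1_H$ recovers (ii) and setting $b = 1_A$ recovers (iv).

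I do not anticipate a genuine obstacle here; the only point requiring a little care is the bookkeeping of Sweedler indices when coassociativity is invoked to split $h_1$ or $h_2$ into two (or three) legs, and making sure that the absorption step $(h_i \cdot x)(h_{i+1}\cdot 1_A) = h_i \cdot x$ (or its left-handed version) is applied to the correct adjacent legs. Everything is a direct computation from the axioms, with no appeal to the antipode or to any of Lemmas \ref{p1}--\ref{p13}.
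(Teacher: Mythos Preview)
Your argument is correct. The paper states this remark without proof, and your derivation is precisely the intended straightforward verification: expand via (ii), then via (iii) (resp.\ (iv)), and reabsorb the extra $1_A$-factor using (ii) again; for the converse, specialize $k=1_H$ to recover (ii) and $a=1_A$ (resp.\ $b=1_A$) to recover (iii) (resp.\ (iv)).
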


It is immediate to check that any (global) action is a particular example of a partial one.
The following proposition tells us under what condition a partial action is global.

\begin{lema}
\label{condglobal}
	Let $A$ be a partial $H$-module algebra. Then, $A$ is an $H$-module algebra if and only if
$h\cdot 1_{A}=\varepsilon_{{}_{L}}(h)\cdot 1_{A}$, for all $h\in H$.
\end{lema}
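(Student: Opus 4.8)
The plan is to prove both implications. The forward direction is essentially trivial: if $A$ is an $H$-module algebra, then condition (iii) of a global action together with Lemma \ref{condexcpma} immediately gives $h\cdot 1_A = \El(h)\cdot 1_A$ — indeed, this is precisely the content of Lemma \ref{condexcpma}, which tells us the identity holds automatically for any $H$-module algebra. So the only work is in the converse.

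For the converse, assume $A$ is a partial $H$-module algebra satisfying $h\cdot 1_A = \El(h)\cdot 1_A$ for all $h\in H$. I must upgrade the partial compatibility condition (iii), namely $h\cdot(k\cdot a) = (h_1\cdot 1_A)(h_2 k\cdot a)$, to the global one $h\cdot(k\cdot a) = hk\cdot a$. Starting from the partial condition, I would replace $h_1\cdot 1_A$ using the hypothesis to get $(\El(h_1)\cdot 1_A)(h_2 k\cdot a)$. Now the idea is to rewrite $\El(h_1)\otimes h_2$ in a more convenient form; by the definition $\El(h)=\varepsilon(1_1 h)1_2$ (or equivalently using \eqref{p4-3}, \eqref{p7-1}, etc.) one can pull the $\El$ through the coproduct. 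The cleanest route is probably: use condition (ii) backwards to recombine $(\El(h_1)\cdot 1_A)(h_2 k\cdot a)$ — but the tensor legs don't match $\Delta$ of a single element unless I first massage $\El(h_1)\otimes h_2 k$. Here I expect to invoke an identity of the shape $\El(h_1)\otimes h_2 = 1_1 \otimes 1_2 h$ or similar (this is close to \eqref{p4-2} read through $\El$, or follows from Lemma \ref{p3} applied to $\El(h)\in H_L$ together with the counit axioms). Once $\El(h_1)\otimes h_2 k$ is rewritten as something of the form $(\text{first leg})\otimes(\text{second leg})$ that reassembles under $\Delta$, condition (ii) gives $\El(h_1)\cdot(1_A\cdot(\dots))$ collapsing to $hk\cdot a$ via condition (i) and the coalgebra axioms.

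The main obstacle is finding the right combinatorial identity to make the two tensor legs $\El(h_1)\otimes h_2 k$ reassemble so that condition (ii) of a partial action (which requires a genuine $\Delta(g)=g_1\otimes g_2$ on the right-hand side) applies. I expect the key computation to run roughly: $(\El(h_1)\cdot 1_A)(h_2 k\cdot a) = (\varepsilon(1_1 h_1)1_2\cdot 1_A)(h_2 k\cdot a) = (1_2 \cdot 1_A)((1_1 h)_1 k \cdot a)$ after relabeling via axiom (iv) of the weak Hopf algebra definition, i.e. $\varepsilon(1_1 h_1)\, h_2 = (1_1 h)$-type rewriting, landing on $(1_2\cdot 1_A)(1_1 h k \cdot a)$, which by condition (ii) is $1_H\cdot((h k\cdot a))$... — here one must be careful about the order of the factors and about the fact that $\Delta(1_H)=1_1\otimes 1_2$ splits correctly. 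Applying condition (ii) in the form $g\cdot(xy)=(g_1\cdot x)(g_2\cdot y)$ with $g=1_H$ and then condition (i) finishes it, giving $hk\cdot a$. Once condition (iii) of a global action is established, conditions (i) and (ii) are inherited verbatim from the partial structure, so $A$ is an $H$-module algebra and the proof is complete.
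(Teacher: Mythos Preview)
Your overall architecture matches the paper's: the forward direction is Lemma~\ref{condexcpma}, and for the converse you start from the partial condition (iii), replace $h_1\cdot 1_A$ by $\El(h_1)\cdot 1_A$ via the hypothesis, and try to reduce to $(1_1\cdot 1_A)(1_2 hk\cdot a)=1_H\cdot(hk\cdot a)=hk\cdot a$. That is exactly the paper's plan.

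However, the middle step contains a genuine gap. The identity you reach for, of the shape $\El(h_1)\otimes h_2 = 1_1\otimes 1_2 h$ (or your ``$(1_2\cdot 1_A)(1_1 hk\cdot a)$'' landing), is false in a weak Hopf algebra. What actually holds is
\[
\El(h_1)\otimes h_2 \;=\; h_1S(h_2)\otimes h_3 \;\overset{\eqref{p12-2}}{=}\; S(1_1)\otimes 1_2 h,
\]
so after the first use of the hypothesis you obtain $(S(1_1)\cdot 1_A)(1_2 hk\cdot a)$, not $(1_1\cdot 1_A)(1_2 hk\cdot a)$. Axiom~(iv) of the weak bialgebra definition does not bridge this; no counit manipulation will remove the antipode on $1_1$. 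The paper closes this gap by applying the hypothesis a \emph{second} time to $S(1_1)$, then invoking $\El\circ S=\El\circ\Er$ \eqref{p9-1} together with $1_1\in H_R$ (Lemma~\ref{p2}, so $\Er(1_1)=1_1$) to obtain $S(1_1)\cdot 1_A=\El(S(1_1))\cdot 1_A=\El(\Er(1_1))\cdot 1_A=\Er(1_1)\cdot 1_A=1_1\cdot 1_A$. Only then does partial condition~(iii) with $1_H$ finish the argument. Note also that the reassembly uses condition~(iii) (with $h=1_H$), not condition~(ii), and requires the factors in the order $(1_1\cdot 1_A)(1_2\cdots)$, not the swapped order you wrote.
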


\begin{proof} Suppose that $h\cdot 1_{A}=\varepsilon_{{}_{L}}(h)\cdot 1_{A}$, for all $h\in H$.
Then,
$$
\begin{array}{rcl}
	h\cdot(g\cdot a) &=& (h_1\cdot 1_{A})(h_2g\cdot a)\\
	&=& (\varepsilon_{{}_{L}}(h_1)\cdot 1_{A})(h_2g\cdot a)\\
	&=& (h_1S(h_2)\cdot 1_{A})(h_3g\cdot a)\\
	&\overset{\eqref{p12-2}}{=}& (S(1_1)\cdot 1_{A})(1_2hg\cdot a)\\
\end{array}
$$
$$
\begin{array}{rcl}
	&=& (\varepsilon_{{}_{L}}(S(1_1))\cdot 1_{A})(1_2hg\cdot a)\\
	&\overset{\eqref{p9-1}}{=}& (\varepsilon_{{}_{L}}(\varepsilon_{{}_{R}}(1_1))\cdot 1_{A})(1_2hg\cdot a)\\
	&=& (\varepsilon_{{}_{R}}(1_1)\cdot 1_{A})(1_2hg\cdot a)\\
	&\overset{\eqref{p2}}{=}& (1_1\cdot 1_{A})(1_2hg\cdot a)\\
	&=& 1_H\cdot(hg\cdot a)\\
	&=& hg\cdot a,
\end{array}
$$ for all $g,h\in H$.
\end{proof}

In the next lemmas we will see some technical properties of partial actions, which will be
very useful tools in the sequel.

\vu

\begin{lema}\label{noHRcola}
	Let $A$ be a partial $H$-module algebra and $x\in H$.  If $w\in H_R$ (or, $w\in H_L$ and the partial action is symmetric),
 then $$w\cdot (h\cdot a)=wh\cdot a,$$ for every $h\in H$ and $a\in A$.
\end{lema}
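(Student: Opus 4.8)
The plan is to reduce the claim $w\cdot(h\cdot a)=wh\cdot a$ to the defining identity (iii) of a partial action, namely $h\cdot(k\cdot a)=(h_1\cdot 1_A)(h_2 k\cdot a)$, by controlling the ``obstruction term'' $(h_1\cdot 1_A)$ that distinguishes a partial action from a global one. Concretely, for $w\in H_R$ I would start from
\[
w\cdot(h\cdot a)=(w_1\cdot 1_A)(w_2 h\cdot a),
\]
and then exploit Lemma \ref{p3}: since $w\in H_R$ we have $\Delta(w)=1_1\otimes 1_2 w$ (the ``in this case'' form), so the right-hand side becomes $(1_1\cdot 1_A)(1_2 w h\cdot a)$. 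Now the first factor is $1_1\cdot 1_A$ with $1_2$ still attached to the second factor, which is exactly the shape produced by applying (ii) to $1_H\cdot(x)=x$: indeed $(1_1\cdot 1_A)(1_2\cdot x)=1_H\cdot(1_A x)=x$. Taking $x=wh\cdot a$ collapses the expression to $wh\cdot a$, as desired.

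For the symmetric case with $w\in H_L$, the argument is the mirror image: use condition (iv), $h\cdot(k\cdot a)=(h_1 k\cdot a)(h_2\cdot 1_A)$, to write $w\cdot(h\cdot a)=(w_1 h\cdot a)(w_2\cdot 1_A)$, then apply Lemma \ref{p3}\eqref{p3-1} in the form $\Delta(w)=w1_1\otimes 1_2$ for $w\in H_L$, obtaining $(w h 1_1\cdot a)(1_2\cdot 1_A)$ — wait, one must be careful about where the $w$ lands; more precisely $\Delta(w)=1_1 w\otimes 1_2$ gives $(1_1 w h\cdot a)(1_2\cdot 1_A)$, and then the identity $(1_1\cdot x)(1_2\cdot 1_A)=1_H\cdot(x\,1_A)=x$ (again from (ii) and (i)) finishes it with $x=wh\cdot a$.

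The main obstacle is purely bookkeeping: choosing the correct one of the two coproduct formulas for elements of $H_R$ (resp. $H_L$) furnished by Lemma \ref{p3} so that the surviving ``$1_1\otimes 1_2$'' pair is positioned to be reabsorbed by the unit axioms (i)--(ii), and making sure the extra $w$ slides to the correct side of $h$ inside the action. No deep input beyond Lemma \ref{p3} and the axioms is needed; the whole proof is two short Sweedler-notation computations, one for each case.
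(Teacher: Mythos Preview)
Your overall strategy is exactly the paper's: expand $w\cdot(h\cdot a)$ via axiom (iii) (resp.\ (iv)), use Lemma~\ref{p3} to replace $\Delta(w)$ by $1_1\otimes 1_2 w$ (resp.\ $1_1 w\otimes 1_2$), and then recognize the result as $1_H\cdot(wh\cdot a)=wh\cdot a$.

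However, your reassembly step is misstated. After the substitution you arrive at $(1_1\cdot 1_A)(1_2 wh\cdot a)$, where $1_2 wh$ is a \emph{single} element of $H$ acting on $a$. This is \emph{not} of the form $(1_1\cdot 1_A)(1_2\cdot x)$ with $x=wh\cdot a$, so axiom (ii) does not apply as you claim. What actually closes the computation is axiom (iii) itself, read backwards with $h=1_H$ and $k=wh$:
\[
(1_1\cdot 1_A)(1_2\,wh\cdot a)\;=\;1_H\cdot(wh\cdot a)\;=\;wh\cdot a.
\]
The same correction applies in the symmetric case: $(1_1 wh\cdot a)(1_2\cdot 1_A)$ is not $(1_1\cdot x)(1_2\cdot 1_A)$ with $x=wh\cdot a$; rather, it equals $1_H\cdot(wh\cdot a)$ by axiom (iv) with $h=1_H$, $k=wh$. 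Once you swap (ii) for (iii)/(iv) in that final step, your argument is line-for-line the paper's.
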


\begin{proof} Suppose first that $w\in H_R$. Thus,
	\[
	\begin{array}{ccl}
	w\cdot (h\cdot a) &=&(w_1\cdot 1_{A})(w_2h\cdot a)\\
	&\overset{\eqref{p3-2}}{=}&(1_1\cdot 1_{A})(1_2wh\cdot a)\\
	&= &1_H\cdot(wh\cdot a)\\
	&= &wh\cdot a.
	\end{array}
	\]
Now, assuming that the partial action is symmetric and $w\in H_L$, we have
\[
	\begin{array}{ccl}
	w\cdot (h\cdot a) &=&(w_1h\cdot a)(w_2 \cdot  1_{A})\\
	&\overset{\eqref{p3-1}}{=}&(1_1wh\cdot 1_{A})(1_2\cdot a)\\
	&=&1_H\cdot(wh\cdot a)\\
	&=& wh\cdot a.
	\end{array}
	\]
\end{proof}

\begin{lema}
\label{lemaadicional}
Let $A$ be a partial $H$-module algebra, $h,k \in H$ and $a,b \in A$. Then, $$(h \cdot a)(k \cdot b) = (1_1h \cdot a)(1_2 k \cdot b).$$
\end{lema}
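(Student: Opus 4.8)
The plan is to reduce the claim to the defining property (iii) of a partial action together with the multiplicativity property (ii), by inserting the unit $1_H$ in the appropriate spot. The key observation is that the right-hand side involves $\Delta(1_H) = 1_1 \otimes 1_2$, which by property (i) and (iii) controls how the ``defect'' $h_1 \cdot 1_A$ appears.

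First I would start from the right-hand side and use property (iii) backwards on the factor $(1_1 h \cdot a)$. Concretely, I expect to write $h \cdot a = 1_H \cdot (h \cdot a) = (1_1 \cdot 1_A)(1_2 h \cdot a)$ using (i) and (iii), so that the term $(1_1 h \cdot a)$ already carries an implicit $(1_1' \cdot 1_A)$ when expanded. Then the first factor $(h \cdot a)$ on the left, rewritten this way, produces exactly the $(1_1 \cdot 1_A)$ that reappears on the right, while the coassociativity of $\Delta$ on $1_H$ (Definition (v) of a weak Hopf algebra, i.e. $(1_H \otimes \Delta(1_H))(\Delta(1_H) \otimes 1_H) = \Delta^2(1_H)$) lets me collapse the doubled copies of $1_1 \otimes 1_2$ into a single one. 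Alternatively, and perhaps more cleanly, I would compute $(h \cdot a)(k \cdot b)$ directly: using property (ii) in reverse is not available since $a,b$ need not combine, so instead I use that $1_H \cdot (h \cdot a) = h \cdot a$ and expand the left $1_H$ via (iii): $(h \cdot a)(k \cdot b) = (1_H \cdot (h \cdot a))(k \cdot b) = (1_1 \cdot 1_A)(1_2 h \cdot a)(k \cdot b)$. At this point I need to absorb the stray $(1_1 \cdot 1_A)$ — but it is sitting on the wrong side of $(1_2 h \cdot a)$ relative to $(k \cdot b)$, so this direct route needs a symmetry argument or a second application.

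The cleaner argument I would actually carry out: apply property (iii) to the product directly. Note $(h \cdot a)(k \cdot b) = (h \cdot a)\,\bigl(1_H \cdot (k \cdot b)\bigr) = (h \cdot a)\,(1_1 \cdot 1_A)(1_2 k \cdot b)$. Now I would like to move $(1_1 \cdot 1_A)$ to the left past $(h \cdot a)$; this is not automatic. So instead I start over with the symmetric manipulation on the first factor. Since this is getting delicate, the honest plan is: rewrite the left-hand side as $h \cdot (a \cdot' \text{something})$... no — better, use Remark \ref{nonunital}. By that remark, $(h \cdot a)(k \cdot b)$ is a summand of $h \cdot (a (k' \cdot b))$-type expressions only when indices match. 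The genuinely safe path is: compute $(1_1 h \cdot a)(1_2 k \cdot b) = 1_H \cdot \bigl((h \cdot a)(k \cdot b)\bigr)$? — no, that is false in general.

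Given these hazards, the approach I would commit to is the first one: expand $(h\cdot a) = (1_1\cdot 1_A)(1_2h\cdot a)$ on the \emph{left-hand side} (this is exactly property (iii) with $k = 1_H$ plus property (i)), and expand $(1_1 h \cdot a) = (1_1 \cdot 1_A)(1_2 h \cdot a)$ on the \emph{right-hand side} likewise. After both expansions, the equality to prove becomes
\[
(1_1 \cdot 1_A)(1_2 h \cdot a)(k \cdot b) \;=\; (1_1 \cdot 1_A)(1_1' \cdot 1_A)(1_2' 1_2 h \cdot a)(k \cdot b),
\]
after relabeling, and the right side simplifies because $(1_1 \cdot 1_A)(1_1' \cdot 1_A)(1_2' 1_2 \otimes \cdots) $ collapses via the coassociativity-type identity $(\Delta(1_H) \otimes 1_H)(1_H \otimes \Delta(1_H)) = \Delta^2(1_H)$ from axiom (v), together with idempotency of $1_1 \cdot 1_A$-type elements coming from $1_H \cdot 1_A = 1_A$. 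The main obstacle, and the step I expect to spend the most care on, is precisely this bookkeeping with the two copies of $\Delta(1_H)$: one must check that property (iii) applied to $(1_1 h \cdot a)$ really does produce a factor that merges with the pre-existing $(1_1 \cdot 1_A)$ using only axiom (v) and the already-established Lemmas \ref{p2}, \ref{p3}, so that no symmetry hypothesis on the partial action is needed. I would verify this by a short Sweedler-notation computation, checking at each step that every comultiplication splitting is of $1_H$ (so axiom (v) applies) and invoking property (ii) to recombine $(1_1 \cdot 1_A)(1_2 h \cdot a) = h \cdot a$ at the end.
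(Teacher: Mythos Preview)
Your proposal does not reach a valid proof, and the committed approach at the end contains a genuine error. The expansion you write on the right-hand side, ``$(1_1 h \cdot a) = (1_1 \cdot 1_A)(1_2 h \cdot a)$'', is not an instance of property~(iii): that axiom reads $h'\cdot(k'\cdot a) = (h'_1\cdot 1_A)(h'_2 k'\cdot a)$, and with $h'=1_H$ it gives $h\cdot a = (1_1\cdot 1_A)(1_2 h\cdot a)$, which is your \emph{left}-hand expansion, not an identity for $(1_1 h)\cdot a$. There is no nested action on the left of $(1_1 h)\cdot a$ to which (iii) could apply. In addition, after your two expansions you have $(k\cdot b)$ unchanged on both sides, so the target factor $(1_2 k\cdot b)$ has simply disappeared from the problem; the displayed ``reduced'' equality is not equivalent to the lemma.

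Ironically, the route you dismissed is exactly the one the paper takes. Starting from $1_H\cdot\bigl((h\cdot a)(k\cdot b)\bigr)$ and splitting via (ii) gives $(1_1\cdot(h\cdot a))(1_2\cdot(k\cdot b))$. You were right that one cannot immediately replace $1_2\cdot(k\cdot b)$ by $(1_2 k)\cdot b$, but the paper does not do this directly: it applies (iii) to the second factor to get $(1_1\cdot(h\cdot a))(1_2\cdot 1_A)(1_3 k\cdot b)$, then uses (ii) to reabsorb $(1_2\cdot 1_A)$ into the first factor (since $(h\cdot a)1_A = h\cdot a$), yielding $(1_1\cdot(h\cdot a))(1_2 k\cdot b)$. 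The final and crucial step --- which your proposal never invokes --- is Lemma~\ref{noHRcola} together with Lemma~\ref{p2}: since $1_1\in H_R$, one has $1_1\cdot(h\cdot a) = (1_1 h)\cdot a$ without any symmetry hypothesis. That is the missing idea.
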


\begin{proof} In fact,
$$
\begin{array}{rcl}
	(h \cdot a)(k \cdot b) & = & 1_H \cdot [(h \cdot a)(k \cdot b)] \\
	&\overset{}{=}& (1_1 \cdot h \cdot a)(1_2 \cdot k \cdot b) \\
	&\overset{}{=}& (1_1 \cdot h \cdot a)(1_2 \cdot 1_A)(1_3 \cdot k \cdot b) \\
	&\overset{}{=}& (1_1 \cdot h \cdot a)(1_2 k \cdot b) \\
	&=&  (1_1 h \cdot a)(1_2 k \cdot b)
\end{array}
$$
where the last equality follows by Lemmas \ref{noHRcola} and \ref{p2}.
\end{proof}

\vu

\begin{lema}\label{saiprafora}
	Let $A$ be a partial $H$-module algebra, $a, b\in A$ and $z\in H$.

\begin{itemize}
	\item[(i)] If $z\in H_L$, then $(z\cdot a)b=z\cdot ab$.

\vu

	\item[(ii)] If $z\in H_R$, then $a(z\cdot b)=z\cdot ab$.
\end{itemize}
In particular, $(H_L \cdot A)$ is a right ideal of $A$ and $(H_R \cdot A)$ is a left ideal of $A$.
\end{lema}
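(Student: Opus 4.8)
The plan is to verify each identity by a direct computation using the defining properties of a partial action together with the structural lemmas on $H_L$ and $H_R$ established earlier.

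For part (i), suppose $z \in H_L$. The natural approach is to write $(z \cdot a)b$ and somehow absorb $b$ into the action. The key is that by Lemma~\ref{p3}, for $z \in H_L$ we have $\Delta(z) = z1_1 \otimes 1_2$, so $z_1 \otimes z_2 = z1_1 \otimes 1_2$. Starting from $z \cdot ab = (z_1 \cdot a)(z_2 \cdot b) = (z1_1 \cdot a)(1_2 \cdot b) = (z1_1 \cdot a)b$, I then need to move the $1_1$ out. Apply Lemma~\ref{lemaadicional} (or Lemma~\ref{noHRcola}) in reverse: since $1_1 \in H_R$ by Lemma~\ref{p2}, one has $(z1_1 \cdot a)b = (z \cdot (1_1 \cdot a)) \cdot \text{(something)}$... more carefully, I expect to use Lemma~\ref{noHRcola} which gives $1_1 \cdot (h \cdot a) = 1_1 h \cdot a$ for $1_1 \in H_R$, hence $(z1_1 \cdot a) = ?$. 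Actually the cleanest route: $z \cdot ab = (z_1 \cdot a)(z_2 \cdot b)$ with $\Delta(z)= 1_1 z \otimes 1_2$ (the ``in this case'' form of \eqref{p3-1}), so $z \cdot ab = (1_1 z \cdot a)(1_2 \cdot b)$, and then by Lemma~\ref{lemaadicional} applied to the pair $(z \cdot a, 1_H \cdot b)$ we get $(1_1 z \cdot a)(1_2 \cdot b) = (1_1 z \cdot a)(1_2 1_H \cdot b)$ which is just $(z \cdot a)(1_H \cdot b) = (z\cdot a)b$ read backwards. So part (i) is: $(z \cdot a)b = (z \cdot a)(1_H \cdot b) \overset{\ref{lemaadicional}}{=} (1_1 z \cdot a)(1_2 \cdot b) \overset{\eqref{p3-1}}{=} (z_1 \cdot a)(z_2 \cdot b) \overset{\text{(ii)}}{=} z \cdot ab$.

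Part (ii) is entirely symmetric: for $z \in H_R$, Lemma~\ref{p3} gives $\Delta(z) = 1_1 \otimes 1_2 z$, and the computation reads $a(z \cdot b) = (1_H \cdot a)(z \cdot b) \overset{\ref{lemaadicional}}{=} (1_1 \cdot a)(1_2 z \cdot b) \overset{\eqref{p3-2}}{=} (z_1 \cdot a)(z_2 \cdot b) = z \cdot ab$. Once both identities are in hand, the ``in particular'' claims are immediate: for $h \in H$, $a \in A$, $b \in A$ and $z \in H_L$, part (i) shows $(z \cdot a)b = z \cdot (ab) \in H_L \cdot A$, so $(H_L \cdot A)$ is closed under right multiplication by $A$, i.e.\ a right ideal; dually $(H_R \cdot A)$ is a left ideal by part (ii).

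I do not anticipate a serious obstacle here — the whole statement is a routine consequence of Lemmas~\ref{p2}, \ref{p3}, and \ref{lemaadicional}. The only mild subtlety is bookkeeping: one must be careful to invoke the correct form of the comultiplication on $H_L$ versus $H_R$ from Lemma~\ref{p3} (there are two expressions for each, differing by which side $z$ sits on, and the one matching \eqref{p3-1}/\eqref{p3-2} as stated is the convenient one), and to check that Lemma~\ref{lemaadicional} is applied with the arguments in the right order. No non-unitality issue arises since $1_H \cdot a = a$ by axiom (i).
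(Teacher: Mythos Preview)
Your proposal is correct and, after the exploratory preamble, the clean argument you settle on is line-for-line identical to the paper's proof: in both cases one writes the missing factor as $1_H\cdot(-)$, applies Lemma~\ref{lemaadicional} to insert $1_1\otimes 1_2$, and then recognizes the resulting expression as $\Delta(z)$ via \eqref{p3-1} (for $z\in H_L$) or \eqref{p3-2} (for $z\in H_R$).
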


\begin{proof}(i) If $z\in H_L$,
	\[
	\begin{array}{rcl}
		(z\cdot a)b&=&(z\cdot a)(1_H\cdot b)\\
		&\Ref{lemaadicional}& (1_1z\cdot a)(1_2\cdot b)\\
		&\Ref{p3-1}&(z_1\cdot a)(z_2\cdot b)\\
		&=&z\cdot ab
	\end{array}
	\]

(ii) If $z\in H_R$,
	\[
	\begin{array}{rcl}
		a(z\cdot b)&\overset{}{=}&(1_H\cdot a)(z\cdot b) \\
		&\Ref{lemaadicional}&(1_1\cdot a)(1_2z\cdot b) \\
		&\overset{\eqref{p3-2}}{=}& (z_1\cdot a)(z_2\cdot b)\\
		&=&z\cdot ab
	\end{array}
    \]

\vu

The last assertion is obvious.
\end{proof}
From the above lemma, we have the following immediate consequences.

\vu

\begin{coro} Let $A$ be a partial $H$-module algebra, $h,z\in H$ and $a\in A$.

\begin{itemize}

	\item[(i)] If $z\in H_L$, we have that $(z\cdot 1_{A})(h\cdot a)=z\cdot (h\cdot a)$. If, in addition,
the action is symmetric, then $z\cdot (h\cdot a)=zh\cdot a$.

\vu

	\item[(ii)] If $z \in H_R$, then $(h \cdot a)(z \cdot  1_{A}) = zh \cdot a$ \qed
\end{itemize}
\end{coro}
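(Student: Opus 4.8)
The plan is to read both assertions off directly from Lemma~\ref{saiprafora} by specializing one of its two arguments to $1_A$, and then to invoke Lemma~\ref{noHRcola} to collapse the iterated action into a single one.

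For part~(i), take $z\in H_L$. Applying Lemma~\ref{saiprafora}(i) with the first argument equal to $1_A$ and the second equal to $h\cdot a$ gives $(z\cdot 1_A)(h\cdot a)=z\cdot(1_A(h\cdot a))=z\cdot(h\cdot a)$, where the last step uses that $A$ is unital. This is the first conclusion. If moreover the partial action is symmetric, then $z\in H_L$ falls under the hypotheses of Lemma~\ref{noHRcola}, which yields $z\cdot(h\cdot a)=zh\cdot a$; chaining this with the identity just obtained gives $(z\cdot 1_A)(h\cdot a)=zh\cdot a$ as well.

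For part~(ii), take $z\in H_R$. Here I would instead use Lemma~\ref{saiprafora}(ii), with first argument $h\cdot a$ and second argument $1_A$: this gives $(h\cdot a)(z\cdot 1_A)=z\cdot((h\cdot a)1_A)=z\cdot(h\cdot a)$, again using unitality of $A$. Now $z\in H_R$ satisfies the hypotheses of Lemma~\ref{noHRcola} (no symmetry needed in this case), so $z\cdot(h\cdot a)=zh\cdot a$, and combining the two equalities finishes the proof.

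There is essentially no hard step: the statement is a formal consequence of the two lemmas it cites. The only points requiring a little care are choosing the correct one of the two parts of Lemma~\ref{saiprafora} for each side (so that the factor $z\cdot(-)$ ends up on the side where $H_L$, resp.\ $H_R$, produces a one-sided ideal), remembering that substituting $1_A$ is legitimate precisely because all algebras here are assumed unital, and noting that the symmetry hypothesis is needed only for the $H_L$ refinement in~(i), whereas the $H_R$ case in~(ii) is unconditional.
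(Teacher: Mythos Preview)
Your proof is correct and matches the paper's intent exactly: the corollary is stated with a \qed{} and introduced as an ``immediate consequence'' of Lemma~\ref{saiprafora}, and your derivation---specializing one argument to $1_A$ in Lemma~\ref{saiprafora} and then applying Lemma~\ref{noHRcola}---is precisely the intended route.
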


\vu

\begin{lema}
\label{2.25}
Let $A$ be a partial $H$-module algebra. The following assertions hold for all $h,k \in H$ and $a,b \in A$:
\begin{itemize}	
	\item[(i)] $(h\cdot a)(k\cdot b) = h_1\cdot (a(S(h_2)k\cdot b))$.

\vu

	\item[(ii)] If the action is symmetric and the antipode $S$ is invertible then, $$(h\cdot a)(k\cdot b) = k_2\cdot (( S^{-1}(k_1)h\cdot a)b).$$
\end{itemize}
	
\end{lema}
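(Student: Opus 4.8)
The plan is to prove (i) by a direct computation starting from the right-hand side, expanding the inner product using axiom (ii) of Definition \ref{acaoparcial}, and then collapsing the antipode using the counital map identities from Lemma \ref{p4}. Concretely, I would first write $h_1\cdot(a(S(h_2)k\cdot b))$ and apply the equivalent form of axioms (ii)-(iii) recorded in Remark \ref{nonunital}, namely $h\cdot(a(k\cdot b))=(h_1\cdot a)(h_2k\cdot b)$, with $h$ replaced by $h_1$ and $k$ replaced by $S(h_2)k$. This yields $(h_1\cdot a)(h_2S(h_3)k\cdot b)$. Now the subexpression $h_2S(h_3)$ appearing in the second factor is exactly $\El(h_2)$ applied after comultiplication, and more precisely $h_1\otimes h_2S(h_3)=1_1h\otimes 1_2$ by \eqref{p4-1}. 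Substituting gives $(1_1h\cdot a)(1_2k\cdot b)$, which by Lemma \ref{lemaadicional} equals $(h\cdot a)(k\cdot b)$. That closes part (i).

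For part (ii), the idea is the mirror-image argument, this time using the symmetric form of the axioms. By the second equivalence in Remark \ref{nonunital}, symmetry gives $h\cdot((k\cdot a)b)=(h_1k\cdot a)(h_2\cdot b)$. Starting from the right-hand side $k_2\cdot((S^{-1}(k_1)h\cdot a)b)$, apply this identity with the roles arranged so that the outer element is $k_2$, the inner acted element uses $k_1$, and the trailing factor is $b$: this produces $(k_2S^{-1}(k_3)h\cdot a)(k_4\cdot b)$ after re-indexing the coproduct. The crucial simplification is now $k_1S^{-1}(k_2)\otimes k_3$; by applying $S^{-1}$-analogues of \eqref{p12-2} (or equivalently reading \eqref{p4-2} through the bijectivity of $S$), one gets $k_2S^{-1}(k_1)\otimes k_3 = S^{-1}(1_1)\otimes 1_2k$ — the precise identity to invoke is the one obtained from \eqref{p12-2} by replacing $k$ with $S^{-1}(k)$ and using Lemma \ref{p9}. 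This reduces the expression to $(S^{-1}(1_1)h\cdot a)(1_2k\cdot b)$, and one final application of Lemma \ref{lemaadicional} together with $S^{-1}(1_1)\otimes 1_2=1_1\otimes 1_2$ (from $S(1_H)=1_H$, \eqref{p11-4}, and the fact that $\Delta(1_H)$ is idempotent-like) collapses this to $(h\cdot a)(k\cdot b)$.

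The main obstacle I anticipate is in part (ii): getting the correct $S^{-1}$-version of the coproduct identity \eqref{p12-2} and tracking the Sweedler indices carefully when the coproduct of $k$ is split three or four ways. In the weak Hopf setting one cannot naively move idempotent pieces $1_1\otimes 1_2$ past elements, so each such step must be justified by an explicit lemma — Lemmas \ref{p13}, \ref{p14}, and \ref{saiprafora} are the natural candidates. I would therefore be careful to first establish the auxiliary identity $k_2S^{-1}(k_1)\otimes k_3 = 1_1\otimes 1_2k$ (the $S^{-1}$-twisted analogue of \eqref{p4-2}) as a short preliminary computation using coassociativity, axiom (vii) of the weak Hopf algebra definition, and invertibility of $S$, and only then feed it into the main chain. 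Part (i), by contrast, should be essentially mechanical once Remark \ref{nonunital} and \eqref{p4-1} are in hand.
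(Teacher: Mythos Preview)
Your plan matches the paper's proof essentially line for line: for (i) the paper expands $h_1\cdot(a(S(h_2)k\cdot b))$ using axioms (ii)--(iii), collapses $h_1\otimes h_2S(h_3)$ to $1_1h\otimes 1_2$ via \eqref{p4-1}, and finishes with Lemma~\ref{lemaadicional}; for (ii) it first isolates the auxiliary identity $k_2S^{-1}(k_1)\otimes k_3=1_1\otimes 1_2k$ (obtained from \eqref{p12-2} using invertibility of $S$) and then runs the symmetric analogue, ending again with Lemma~\ref{lemaadicional}.

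Two small corrections in your treatment of (ii). First, after applying the symmetric form of Remark~\ref{nonunital} to $k_2\cdot((S^{-1}(k_1)h\cdot a)b)$ you obtain $(k_2S^{-1}(k_1)h\cdot a)(k_3\cdot b)$, not $(k_2S^{-1}(k_3)h\cdot a)(k_4\cdot b)$; the index $k_1$ stays inside $S^{-1}$. Second, the detour through $S^{-1}(1_1)\otimes 1_2=1_1\otimes 1_2$ is unnecessary, and your justification (``$S(1_H)=1_H$ and $\Delta(1_H)$ idempotent-like'') does not establish it --- note that $S^{-1}(1_1)\in H_L$ while $1_1\in H_R$, so this is not a tautology. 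Instead, apply $S^{-1}\otimes\mathrm{id}$ directly to \eqref{p12-2} and use that $S^{-1}$ is antimultiplicative: $S^{-1}(h_1S(h_2))\otimes h_3 = h_2S^{-1}(h_1)\otimes h_3 = S^{-1}(S(1_1))\otimes 1_2h = 1_1\otimes 1_2h$. This is exactly the paper's route, and it puts you immediately at $(1_1h\cdot a)(1_2k\cdot b)$, whence Lemma~\ref{lemaadicional} finishes.
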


\begin{proof}Let $h,k \in H$ and $a,b \in A$. Then,

(i)
\[
	\begin{array}{rcl}
		h_1\cdot a(S(h_2) k \cdot b) &=& (h_1\cdot a)(h_2\cdot (S(h_3) k \cdot b))\\
		&=& (h_1\cdot a)(h_2\cdot  1_{A})(h_3S(h_4) k \cdot b)\\
		&=& (h_1\cdot a)(h_2S(h_3) k \cdot b)\\
		&\overset{\eqref{p4-1}}{=}& (1_1 h\cdot a)(1_2 k \cdot b)\\
		&\Ref{lemaadicional}&(h\cdot a)(k \cdot b).
	\end{array}
\]

\vu

(ii) Since $S$ is invertible, we obtain from \eqref{p12-2} that
\begin{eqnarray}
\label{u1tu2h=h2sih1th3}
k_2 S^{-1}(k_1) \otimes k_3 = 1_1 \otimes 1_2 k,
\end{eqnarray}  for all $k \in H$.

\vu

Thus,
\[
\begin{array}{rcl}
k_2 \cdot (S^{-1}(k_1)h \cdot a)b &=& (k_2 \cdot (S^{-1}(k_1)h \cdot a))(k_3 \cdot b) \\
 &\overset{\textrm{sym}}{=}& (k_2 S^{-1}(k_1)h \cdot a)(k_3 \cdot  1_{A})(k_4 \cdot b) \\
 &=& (k_2 S^{-1}(k_1)h \cdot a)(k_3 \cdot b) \\
 &\Ref{u1tu2h=h2sih1th3}& (1_1 h\cdot a)(1_2 k \cdot b)  \\
 &\Ref{lemaadicional}& (h\cdot a)(k \cdot b)
\end{array}
\]
\end{proof}

%
%

\section{Partial Groupoid Actions}\label{groupoid}

Partial groupoid action was introduced in the literature by D. Bagio and A. Paques in \cite{BP}.
Our main purpose in this section is to prove that, given a groupoid $G$ such that the set $G_0$ of all its identities is finite,
there is a one to one correspondence between the symmetric partial actions of the
groupoid algebra $\Bbbk G$ on a $G_0$-graded algebra $A$ and the
globalizable partial actions of the groupoid $G$ on $A$ (see Theorem \ref{pga}).

\vu

\begin{dfn}
A \emph{groupoid} is a non-empty set $G$ equipped with a partially defined binary operation, for which the
usual axioms of a group hold whenever they make sense, that is:

(i) For every $g, h, l \in G$, $g(hl)$ exists if and only if $(gh)l$ exists and in this
case they are equal.

\vu

(ii) For every $g, h, l\in G$, $g(hl)$ exists if and only if $gh$ and $hl$ exist.

\vu

(iii) For each $g \in G$ there exist (unique) elements $d(g), r(g) \in G$ such
that $gd(g)$ and $r(g)g$ exist and $gd(g) = g = r(g)g$.

\vu

(iv) For each $g \in G$ there exists an element $g^{-1} \in G$ such that $d(g) = g^{-1}g$ and $r(g) = g g^{-1}$.
\end{dfn}

The uniqueness of the element $g^{-1}$ is an immediate consequence of the above definition, and $(g^{-1})^{-1} = g$, for all $g\in G$.
The element $gh$ there exists if and only if $d(g) = r(h)$ if and only if there exists $h^{-1} g^{-1}$
and, in this case, $(gh)^{-1} = h^{-1} g^{-1}$, $r(gh) = r(g)$ and $d(gh) = d(h)$.

An element $e\in G$ is said to be an \emph{identity} of $G$ if there exists $g\in G$ such that $e = d(g)$ (and so $e = r(g^{-1})$).
Let $G_0$ denote the set of all identities of $G$. Note that $e = e^{-1}=d(e) = r(e)$, for all $e\in G_0$.
For more about groupoid's properties we refer to \cite{Lawson}.

\vu

\begin{dfn} \label{pag} \cite{BP} A \emph{partial action of a groupoid} $G$ on an algebra $A$ is a pair
$$\alpha = (\{\alpha_g\}_{g\in G}, \{D_g\}_{g\in G})$$
where, for each $e\in G_0$ and $g\in G$, $D_e$ is an ideal of $A$, $D_g$ is an ideal of $D_{r(g)}$, and $\alpha_g\colon D_{g^{-1}}\to D_{g}$
is an algebra isomorphism such that:

\begin{itemize}

\item[(i)] $\alpha_e$ is the identity map $ I_{D_e}$ of $D_e$,

\vu

\item[(ii)] $\alpha_{h}^{-1}(D_{g^{-1}}\cap D_{h})\subseteq D_{(gh)^{-1}}$,

\vu

\item[(iii)] $\alpha_g(\alpha_h(x)) = \alpha_{gh}(x)$, for all $x\in \alpha_{h^{-1}}(D_{g^{-1}}\cap D_h)$,
\end{itemize}
for all $e\in G_0$ and $g,h\in G$ such that $d(g)=r(h)$.
\end{dfn}


For the proof of the following lemma see \cite{BP}*{Lemma 1.1}.

\vu

\begin{lema}  Let $\alpha = (\{\alpha_g\}_{g\in G}, \{D_g\}_{g\in G})$ be a partial action of a groupoid $G$
on an algebra $A$. Then,

\begin{itemize}
	
\item[(i)]	 $\alpha_g{}^{-1} = \alpha_{g^{-1}}$, for all $g\in G$,
	
\vu

\item[(ii)] $\alpha_g(D_{g^{-1}}\cap D_h) = D_g\cap D_{gh}$, if $d(g)=r(h)$.\qed

\end{itemize}
\end{lema}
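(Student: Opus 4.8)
The plan is to establish (i) first and then bootstrap both inclusions of (ii) from it. Throughout I will use freely the structural identities recalled before the statement, namely $d(g^{-1})=gg^{-1}=r(g)$, $r(g^{-1})=g^{-1}g=d(g)$, $(gh)^{-1}=h^{-1}g^{-1}$, $r(gh)=r(g)$, $d(gh)=d(h)$, $gd(g)=g=r(g)g$, as well as the containments $D_g\subseteq D_{r(g)}$ and $D_{g^{-1}}\subseteq D_{r(g^{-1})}=D_{d(g)}$ built into Definition \ref{pag}.

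For (i): since $(g^{-1})^{-1}=g$, the map $\alpha_{g^{-1}}$ is an isomorphism $D_g\to D_{g^{-1}}$, so $\alpha_g$ and $\alpha_{g^{-1}}$ are composable in both orders. I would apply axiom (iii) of Definition \ref{pag} to the pair $(g^{-1},g)$, which is legitimate because $d(g^{-1})=r(g)$; its hypothesis set is $\alpha_{g^{-1}}(D_{(g^{-1})^{-1}}\cap D_g)=\alpha_{g^{-1}}(D_g)=D_{g^{-1}}$, so for every $x\in D_{g^{-1}}$ one gets $\alpha_{g^{-1}}(\alpha_g(x))=\alpha_{g^{-1}g}(x)=\alpha_{d(g)}(x)=x$, using axiom (i) and $D_{g^{-1}}\subseteq D_{d(g)}$. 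Hence $\alpha_{g^{-1}}\circ\alpha_g=I_{D_{g^{-1}}}$, and the symmetric argument (interchange $g$ and $g^{-1}$) gives $\alpha_g\circ\alpha_{g^{-1}}=I_{D_g}$, so $\alpha_g^{-1}=\alpha_{g^{-1}}$.

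For the inclusion $\subseteq$ in (ii), fix $x\in D_{g^{-1}}\cap D_h$. As $x\in D_h$ and $\alpha_{h^{-1}}=\alpha_h^{-1}$ by (i), the element $y:=\alpha_{h^{-1}}(x)$ lies in $\alpha_{h^{-1}}(D_{g^{-1}}\cap D_h)$ and satisfies $\alpha_h(y)=x$; applying axiom (iii) to the pair $(g,h)$ (allowed since $d(g)=r(h)$) yields $\alpha_g(x)=\alpha_g(\alpha_h(y))=\alpha_{gh}(y)$. Now axiom (ii) gives $y=\alpha_h^{-1}(x)\in\alpha_h^{-1}(D_{g^{-1}}\cap D_h)\subseteq D_{(gh)^{-1}}$, so $\alpha_g(x)=\alpha_{gh}(y)\in\alpha_{gh}(D_{(gh)^{-1}})=D_{gh}$; together with the obvious $\alpha_g(x)\in\alpha_g(D_{g^{-1}})=D_g$ this proves $\alpha_g(D_{g^{-1}}\cap D_h)\subseteq D_g\cap D_{gh}$. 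For $\supseteq$, I would feed this inclusion back into itself with $g$ replaced by $g^{-1}$ and $h$ by $gh$ — admissible since $d(g^{-1})=r(g)=r(gh)$ — and use $(g^{-1})^{-1}=g$ and $g^{-1}(gh)=d(g)h=r(h)h=h$ to obtain $\alpha_{g^{-1}}(D_g\cap D_{gh})\subseteq D_{g^{-1}}\cap D_h$; applying $\alpha_g$ and invoking (i) (note $D_g\cap D_{gh}\subseteq D_g$, the domain of $\alpha_{g^{-1}}$) gives the reverse containment $D_g\cap D_{gh}\subseteq\alpha_g(D_{g^{-1}}\cap D_h)$, finishing (ii).

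The only genuinely non-mechanical point is the self-substitution in the $\supseteq$ part of (ii): once one notices that plugging $(g^{-1},gh)$ into the already-proved inclusion collapses $g^{-1}(gh)$ to $h$, the reverse inclusion is purely formal. Everything else is careful bookkeeping with $d$, $r$, and the nesting of the ideals $D_g$, so that each composition written down is actually defined; I expect that bookkeeping — rather than any substantive idea — to be the main source of friction.
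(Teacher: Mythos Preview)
Your argument is correct. The paper does not actually prove this lemma itself but simply defers to \cite{BP}*{Lemma 1.1}; your direct verification from the axioms of Definition~\ref{pag}, together with the self-substitution $(g,h)\mapsto(g^{-1},gh)$ for the reverse inclusion in (ii), is exactly the standard proof one would expect to find there.
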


\vu

Given a groupoid $G$, the groupoid algebra $\Bbbk G$ is  a $\Bbbk$-vector space with basis $\{\delta_g\ |\ g\in G\}$,
and multiplication
given by the rule
\[
\delta_g\delta_h=
\begin{cases}
\delta_{gh}, &\textrm{if}\ \ d(g)=r(h)\\
0,  &\text{otherwise}
\end{cases}
\]
for all $g,h\in G$. It is easy to see that $\Bbbk G$ is an algebra, and has an identity element, given by $1_{\Bbbk G}=\sum_{e\in
G_0}\delta_e$, if and only if $G_0$ is finite \cite{Lun}. Moreover, $\Bbbk G$ has a coalgebra
structure given by $$\Delta(\delta_g)=\delta_g\otimes \delta_g\quad\text{and}\quad \varepsilon(\delta_g)=1_{\Bbbk},$$
for all $g\in G$. It is well known that $\Bbbk G$, with the algebra and coalgebra
structures above described, and antipode $S$ given by $S(g)=g^{-1}$, for all $g\in G$, is  a weak Hopf algebra.

\vd

From now on we will assume that $G_0$ is finite and $\alpha = (\{\alpha_g\}_{g\in G}, \{D_g\}_{g\in G})$  is a partial action of $G$
on $A = \bigoplus\limits_{e\in G_{0}} D_e$. We also assume that each ideal $D_g$ has a unit, denoted by $1_g$. Notice that, in this case,
each $1_g$ is a central element of $A$ (in particular, $D_g$ is also an ideal of $A$), and the conditions (ii) and (iii) of Definition \ref{pag} imply the following:
\begin{eqnarray}\label{mag}
\alpha_g(\alpha_h(x1_{h^{-1}})1_{g^{-1}})=\alpha_{gh}(x1_{(gh)^{-1}})1_g,
\end{eqnarray}
for all $x\in D_{h^{-1}}\bigcap D_{(gh)^{-1}}$, whenever $d(g)=r(h)$.

\vu

\begin{lema}\label{lema34}
With the notations and assumptions given above, the map
	$$
		\begin{array}{rl}
		 	\cdot\colon \Bbbk G\otimes A &\to A\\
			\delta_{g} \otimes a & \mapsto \alpha_{g}(a 1_{g^{-1}})
		\end{array}
	$$
is a symmetric partial action of $\Bbbk G$ on $A$.	
\end{lema}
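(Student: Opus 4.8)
The plan is to verify directly the four defining properties of a symmetric partial action from Definition \ref{acaoparcial}, using the groupoid-compatibility identity \eqref{mag} as the main engine, together with the fact that the $1_g$ are central idempotents and the $\alpha_g$ are algebra isomorphisms. Since $\Bbbk G$ has basis $\{\delta_g\}$ and all conditions are $\Bbbk$-linear, it suffices to check everything on basis elements $\delta_g,\delta_h$ and arbitrary $a,b\in A$. First I would record the easy observations: $\Delta(\delta_g)=\delta_g\otimes\delta_g$, $S(\delta_g)=\delta_{g^{-1}}$, $\varepsilon_L(\delta_g)=\delta_{r(g)}$ (this should be computed from $\varepsilon_L(h)=\varepsilon(1_1h)1_2$ with $1_{\Bbbk G}=\sum_{e\in G_0}\delta_e$), and that for $e\in G_0$ one has $1_e=1_{D_e}$ and $D_g\subseteq D_{r(g)}$ so $1_g 1_{r(g)}=1_g$.

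For property (i), $\delta_{1_{\Bbbk G}}\cdot a = \sum_{e\in G_0}\alpha_e(a1_e)=\sum_{e\in G_0}a1_e = a$, since $\alpha_e=\mathrm{I}_{D_e}$ and $A=\bigoplus_{e\in G_0}D_e$. For property (ii), $\delta_g\cdot ab=\alpha_g(ab1_{g^{-1}})=\alpha_g(a1_{g^{-1}})\alpha_g(b1_{g^{-1}})=(\delta_g\cdot a)(\delta_g\cdot b)$, using that $1_{g^{-1}}$ is a central idempotent and $\alpha_g$ is multiplicative; this matches $(\delta_{g_1}\cdot a)(\delta_{g_2}\cdot b)$ since $\Delta(\delta_g)=\delta_g\otimes\delta_g$. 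Properties (iii) and (iv) are the substantive ones. For (iii) I must show $\delta_g\cdot(\delta_h\cdot a)=(\delta_{g_1}\cdot 1_A)(\delta_{g_2}\delta_h\cdot a)$; the left side is $\alpha_g(\alpha_h(a1_{h^{-1}})1_{g^{-1}})$, and the right side is $(\alpha_g(1_{g^{-1}}))(\delta_{gh}\cdot a)$ if $d(g)=r(h)$ and $(\alpha_g(1_{g^{-1}}))\cdot 0 = 0$ otherwise. When $d(g)=r(h)$, $\delta_{gh}\cdot a=\alpha_{gh}(a1_{(gh)^{-1}})$ and $\alpha_g(1_{g^{-1}})=1_g$, so the right side is $1_g\,\alpha_{gh}(a1_{(gh)^{-1}})$; meanwhile, because $D_g$ is an ideal of $D_{r(g)}$, multiplying by $1_g$ localizes everything, and \eqref{mag} — applied with $x=a1_{(gh)^{-1}}$, noting $a1_{(gh)^{-1}}\in D_{(gh)^{-1}}\subseteq D_{h^{-1}}$ so that $x1_{h^{-1}}=x$ and $x1_{(gh)^{-1}}=x$ — gives exactly $\alpha_g(\alpha_h(a1_{(gh)^{-1}})1_{g^{-1}})=\alpha_{gh}(a1_{(gh)^{-1}})1_g$. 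The remaining gap is to identify $\alpha_g(\alpha_h(a1_{h^{-1}})1_{g^{-1}})$ with $\alpha_g(\alpha_h(a1_{(gh)^{-1}})1_{g^{-1}})$, i.e. to show that the $1_{g^{-1}}$ on the outside already forces the extra factor $1_{(gh)^{-1}}$ inside; this follows from Lemma 3.? on $\alpha_h(D_{h^{-1}}\cap D_{(gh)^{-1}}) = \alpha_h(D_{h^{-1}})\cap\alpha_h(D_{(gh)^{-1}})\supseteq$ the relevant piece, combined with $\alpha_h(1_{(gh)^{-1}})$ being the unit of $D_h\cap D_{g^{-1}}$ — equivalently, $\alpha_h(x1_{h^{-1}})1_{g^{-1}}=\alpha_h(x1_{(gh)^{-1}})$ for $x\in D_{h^{-1}}$, which is the content of the containment $\alpha_h^{-1}(D_{g^{-1}}\cap D_h)\subseteq D_{(gh)^{-1}}$ from Definition \ref{pag}(ii). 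When $d(g)\neq r(h)$ the left side is also $0$: $\delta_h\cdot a\in D_h\subseteq D_{r(h)}$, and $\alpha_g$ lands after multiplying by $1_{g^{-1}}=1_{D_{g^{-1}}}\subseteq D_{d(g)}$, and $D_{d(g)}\cdot D_{r(h)}=0$ since these are distinct summands of $A=\bigoplus_e D_e$.

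Property (iv), $\delta_g\cdot(\delta_h\cdot a)=(\delta_{g_1}\delta_h\cdot a)(\delta_{g_2}\cdot 1_A)$, is proved symmetrically: when $d(g)=r(h)$ the right side is $\alpha_{gh}(a1_{(gh)^{-1}})\,1_g = 1_g\,\alpha_{gh}(a1_{(gh)^{-1}})$ by centrality of $1_g$, which is the same expression as in (iii), and the case $d(g)\neq r(h)$ again gives $0$ on both sides. The main obstacle I anticipate is precisely the bookkeeping in step (iii): making rigorous the claim that the outer idempotent $1_{g^{-1}}$ automatically introduces the idempotent $1_{(gh)^{-1}}$ inside $\alpha_h(\cdots)$, so that \eqref{mag} applies verbatim; this requires carefully invoking $\alpha_h^{-1}(D_{g^{-1}}\cap D_h)\subseteq D_{(gh)^{-1}}$ and the fact that $\alpha_h$ sends $1_{(gh)^{-1}}$ to the unit of $\alpha_h(D_{(gh)^{-1}})=D_h\cap D_{gh}$. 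Everything else is routine manipulation with central idempotents and the direct-sum decomposition of $A$.
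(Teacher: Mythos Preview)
Your approach is the same as the paper's: verify (i)--(iii) of Definition~\ref{acaoparcial} on basis elements and deduce (iv) from the centrality of $1_g=\delta_g\cdot 1_A$. The paper simply cites \eqref{mag} for the key equality $\alpha_g(\alpha_h(a1_{h^{-1}})1_{g^{-1}})=\alpha_{gh}(a1_{(gh)^{-1}})1_g$ with $a\in A$ arbitrary, without further comment; your discussion of the bookkeeping needed to make that citation honest is therefore a genuine gain in rigor, not a digression.

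One slip to fix: the claim $D_{(gh)^{-1}}\subseteq D_{h^{-1}}$ is false in general (both are ideals of $D_{d(h)}$, with no containment either way), so you cannot take $x=a\,1_{(gh)^{-1}}$ in \eqref{mag}. The correct choice is $x=a\,1_{h^{-1}}1_{(gh)^{-1}}\in D_{h^{-1}}\cap D_{(gh)^{-1}}$, and this is exactly what your ``remaining gap'' paragraph already produces: from $\alpha_h(D_{h^{-1}}\cap D_{(gh)^{-1}})=D_h\cap D_{g^{-1}}$ one gets $\alpha_h(1_{h^{-1}}1_{(gh)^{-1}})=1_h1_{g^{-1}}$, hence $\alpha_h(a1_{h^{-1}})1_{g^{-1}}=\alpha_h(a1_{h^{-1}}1_{(gh)^{-1}})$, and then \eqref{mag} applies verbatim. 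With that correction the argument goes through.
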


\begin{proof}

Indeed, $\cdot$ is a well defined linear map. Furthermore,
	\begin{itemize}
	\item[(i)] for all $a\in A$,
	$$
		\begin{array}{rcl}
			1_{\Bbbk G}\cdot a &=& \sum\limits_{e\in G_0} \delta_e\cdot a\\
			&=& \sum\limits_{e\in G_0} \alpha_e(a 1_{e^{-1}})\\
			&=& \sum\limits_{e\in G_0} a 1_{e}\\
			&=& a  1_{A} = a
		\end{array}
	$$
	\item[(ii)] for all $g \in G$ and $a,b \in A$,
	$$
		\begin{array}{rcl}
			\delta_g \cdot ab &=& \alpha_g (ab1_{g^{-1}})\\
			&=& \alpha_g (a1_{g^{-1}})\alpha_g(b1_{g^{-1}})\\
			&=& (\delta_g \cdot a)(\delta_g\cdot b)
		\end{array}
	$$
	\item[(iii)] for all $g, h\in G$ and $a\in A$, if $d(g)\neq r(h)$ then $D_{g^{-1}}\bigcap D_h=0=\delta_g\delta_h$, and
$$\delta_g\cdot(\delta_h\cdot a) = \alpha_g(\alpha_h(a1_{h^{-1}})1_{g^{-1}})= 0 = (\delta_g\cdot 1_{A}) (\delta_g \delta_h\cdot a).$$
\nd Otherwise, if $d(g)=r(h)$ then
$$
	\begin{array}{rcl}
			\delta_g\cdot(\delta_h\cdot a) &=& \alpha_g(\alpha_h(a 1_{h^{-1}}) 1_{g^{-1}})\\
			&\overset{\eqref{mag}}{=}& \alpha_{gh} (a 1_{(gh)^{-1}})1_g\\
			&=& (\delta_g\cdot  1_{A}) (\delta_{gh} \cdot a )\\
      &=& (\delta_g\cdot 1_A)(\delta_g\delta_h\cdot a).
	\end{array}
$$

\end{itemize}

The symmetry of $\cdot$ is obvious for, as noticed above, $1_g=\delta_g\cdot 1_A$ is central in $A$, for all $g\in G$.
\end{proof}

The converse of Lemma \ref{lema34} is given in the following theorem, which in particular generalizes  \cite{flores}*{Proposition 2.2}.

\begin{teo}\label{pga}
	Let $A$ be an algebra and $G$ a groupoid such that $G_0$ is finite.
The following statements are equivalent:

\begin{itemize}
	\item[(i)] There exists a partial action $\alpha=(\{\alpha_g\}_{g \in G}$, $\{D_g\}_{g \in G})$ of $G$ on $A$
such that the ideals $D_g$ are unital and $A = \bigoplus\limits_{e \in G_0} D_e$.

\vu

	\item[(ii)] $A$ is a symmetric partial $\Bbbk G$-module algebra.
\end{itemize}
\end{teo}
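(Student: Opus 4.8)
The plan is to prove the two implications separately. The implication $(i)\Rightarrow(ii)$ is already essentially done: it is exactly the content of Lemma \ref{lema34}, since that lemma produces a symmetric partial $\Bbbk G$-module algebra structure on $A=\bigoplus_{e\in G_0}D_e$ out of the data in $(i)$. So the real work is $(ii)\Rightarrow(i)$, and I will concentrate on that.

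So suppose $\cdot\colon \Bbbk G\otimes A\to A$ is a symmetric partial action. The first step is to recover the ideals: for $e\in G_0$ set $1_e:=\delta_e\cdot 1_A$ and $D_e:=\delta_e\cdot A$, and more generally for arbitrary $g\in G$ set $1_g:=\delta_g\cdot 1_A$ and $D_g:=\delta_g\cdot A$. I would first check that each $1_g$ is a central idempotent of $A$: idempotency follows from $\delta_g\cdot 1_A=(\delta_g\cdot 1_A)(\delta_g\cdot 1_A)=\delta_g\delta_g^{?}$-type manipulations — more precisely from axiom (iii)/(iv) applied with $k=\delta_{d(g)}$ together with Lemma \ref{saiprafora} and the fact that $\delta_e\in (\Bbbk G)_L\cap(\Bbbk G)_R$ for $e\in G_0$ (since $\Delta(\delta_e)=\delta_e\otimes\delta_e$ forces $\delta_e$ into both $H_L$ and $H_R$ by Lemma \ref{p3}). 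Centrality of $1_e$ then comes from Lemma \ref{saiprafora}: for $z=\delta_e\in H_L\cap H_R$ we get $(\delta_e\cdot 1_A)b=\delta_e\cdot b=b(\delta_e\cdot 1_A)$ for all $b\in A$. Consequently each $D_g=1_g A$ is an ideal of $A$, $A=\bigoplus_{e\in G_0}D_e$ follows from $1_A=1_{\Bbbk G}\cdot 1_A=\sum_{e\in G_0}1_e$ together with orthogonality $1_e1_f=0$ for $e\neq f$ (which holds because $\delta_e\delta_f=0$), and $D_g\subseteq D_{r(g)}$ follows because $\delta_{r(g)}\delta_g=\delta_g$ gives $1_{r(g)}1_g=1_g$.

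The second step is to define $\alpha_g\colon D_{g^{-1}}\to D_g$ by $\alpha_g(x):=\delta_g\cdot x$ for $x\in D_{g^{-1}}$, and to check it is an algebra isomorphism with inverse $\alpha_{g^{-1}}$. Multiplicativity on $D_{g^{-1}}$ is delicate: from axiom (ii), $\delta_g\cdot(xy)=(\delta_g\cdot x)(\delta_g\cdot y)$ always holds because $\Delta(\delta_g)=\delta_g\otimes\delta_g$, so there is no genuine obstruction here — this is the one place where the grouplike comultiplication of $\Bbbk G$ makes life easy. That $\alpha_g$ sends $D_{g^{-1}}$ into $D_g$ is clear. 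That $\alpha_{g^{-1}}\circ\alpha_g=I_{D_{g^{-1}}}$ comes from axiom (iii): for $x\in D_{g^{-1}}$, $\delta_{g^{-1}}\cdot(\delta_g\cdot x)=(\delta_{g^{-1}}\cdot 1_A)(\delta_{g^{-1}}\delta_g\cdot x)=1_{g^{-1}}(\delta_{d(g^{-1})}\cdot x)$, wait — $d(g^{-1})=r(g)$, so this is $1_{g^{-1}}(\delta_{r(g)}\cdot x)$; but I must rather use $g^{-1}g=d(g)=r(g^{-1})$, giving $1_{g^{-1}}(\delta_{r(g^{-1})}\cdot x)=1_{g^{-1}}\cdot 1_{r(g^{-1})}x$-type simplification. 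Using that $x\in D_{g^{-1}}=1_{g^{-1}}A$ and $1_{g^{-1}}\leq 1_{r(g^{-1})}$, this collapses to $x$. The symmetric version of axiom (iv) handles $\alpha_g\circ\alpha_{g^{-1}}=I_{D_g}$. The axioms (i)--(iii) of Definition \ref{pag} then follow: (i) is immediate since $\delta_e\cdot x=x$ for $x\in D_e$; and (iii) — the compatibility $\alpha_g(\alpha_h(x))=\alpha_{gh}(x)$ on the appropriate domain — is extracted from partial-action axiom (iii) $\delta_g\cdot(\delta_h\cdot x)=(\delta_g\cdot 1_A)(\delta_g\delta_h\cdot x)=1_g(\delta_{gh}\cdot x)=\alpha_{gh}(x)$ whenever $d(g)=r(h)$ and $x$ lies in the right domain (where the leading idempotent $1_g$ is absorbed); (ii) of Definition \ref{pag}, the inclusion $\alpha_h^{-1}(D_{g^{-1}}\cap D_h)\subseteq D_{(gh)^{-1}}$, is obtained by tracking which idempotents annihilate which, i.e. showing $1_{(gh)^{-1}}$ acts as the identity on $\alpha_{h^{-1}}(1_{g^{-1}}1_h A)$.

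I expect the main obstacle to be the bookkeeping of idempotents in the domain conditions — in particular verifying that $\alpha_g$ restricted to $D_{g^{-1}}$ is genuinely a bijection onto $D_g$ and that the composite $\alpha_g\circ\alpha_h$ agrees with $\alpha_{gh}$ exactly on $\alpha_{h^{-1}}(D_{g^{-1}}\cap D_h)$, since the partial-action axioms give identities with stray factors $\delta_g\cdot 1_A=1_g$ that must be shown to be absorbed on the relevant subdomain. This is where Lemma \ref{saiprafora} (pulling $H_L$- and $H_R$-elements in and out of products) and the centrality and idempotency of the $1_g$ do the heavy lifting. Once those are in place, one should also remark that the passage $(i)\to(ii)\to(i)$ reproduces the original $\alpha$ and $D_g$, so that the correspondence is genuinely bijective as claimed in the paragraph preceding the theorem; I would close by noting this briefly rather than spelling it out.
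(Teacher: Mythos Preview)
Your outline has the right skeleton and matches the paper's approach, but there is a genuine gap in the first step. You establish centrality of $1_e=\delta_e\cdot 1_A$ only for $e\in G_0$, by observing $\delta_e\in H_L\cap H_R$ and invoking Lemma~\ref{saiprafora}. For $g\notin G_0$, however, $\delta_g$ lies in neither $H_L$ nor $H_R$ (in $\Bbbk G$ one has $H_L=H_R=\mathrm{span}\{\delta_e:e\in G_0\}$), so Lemma~\ref{saiprafora} says nothing, and you give no argument that $1_g=\delta_g\cdot 1_A$ is central in $A$. Yet you use this centrality immediately: your claim ``each $D_g=1_gA$ is an ideal of $A$'' for general $g$ needs it (and even the inclusion $1_gA\subseteq \delta_g\cdot A$ is not obvious), your verification that $D_g$ is an ideal of $D_{r(g)}$ only checks the containment $D_g\subseteq D_{r(g)}$, and your absorption of the stray factor $1_g$ when checking Definition~\ref{pag}(iii) presupposes it as well.

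This is exactly where the symmetry hypothesis earns its keep. The paper proves, by a direct computation using \emph{both} axiom (iii) and its symmetric companion (iv), the identity
\[
(\delta_g\cdot 1_A)\,a \;=\; \delta_g\cdot(\delta_{g^{-1}}\cdot a) \;=\; a\,(\delta_g\cdot 1_A)
\]
for all $g\in G$ and $a\in A$. This single identity gives centrality of every $1_g$ and, simultaneously, the equality $D_g=1_gA$ (since $1_ga=\delta_g\cdot(\delta_{g^{-1}}\cdot a)\in\delta_g\cdot A$). Once you have it, the remainder of your sketch (bijectivity of $\alpha_g$, the conditions of Definition~\ref{pag}, the decomposition $A=\bigoplus_{e\in G_0}D_e$) goes through essentially as you describe and coincides with the paper's argument. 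A minor related point: your orthogonality claim $1_e1_f=0$ does not follow ``because $\delta_e\delta_f=0$'' alone; one needs a short step through Lemma~\ref{saiprafora} or axiom (iii) to turn the product $(\delta_e\cdot 1_A)(\delta_f\cdot 1_A)$ into something involving $\delta_e\delta_f$.
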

\begin{proof}
(i)$\Rightarrow$(ii) It follows from Lemma \ref{lema34}.

\vd

(ii)$\Rightarrow$(i) Let $D_g=\delta_g\cdot A$, $1_g=\delta_g\cdot 1_A$, and
$\alpha_g\colon D_{g^{-1}}\to D_g$ given by $\alpha_g(x) =\delta_g\cdot x$, for all
$g\in G$ and $x\in D_{g^{-1}}$. We will proceed by steps.

\vu

To show that $\alpha=(\{\alpha_g\}_{g \in G}, \{D_g\}_{g \in G})$ is a partial
action of $G$ on $A$ we need to check that, for every $g\in G$ and $e\in G_0$,
$D_g$ is an ideal of $D_{r(g)}$, $D_e$ is an ideal of $A$,
and $\alpha_g$ is an  algebra isomorphism, which will be done in the steps 1, 2, and 3.
We also show in the step 1 that the ideals $D_g$, $g\in G$, are all unital. In the step 4,
we show that the conditions (i)-(iii) of Definition \ref{pag} hold.
Finally, in the step 5 we show that $A = \bigoplus\limits_{e \in G_0} D_e$.

\vd

{\bf Step 1}: First of all, $1_g$ is a central idempotent of $A$ and $D_g=1_gA$, which implies that $D_g$ is a unital ideal of $A$, for all $g\in G$.

\vu

Indeed, $(1_g)^2=(\delta_g\cdot 1_A)(\delta_g\cdot 1_A)=\delta_g\cdot 1_A=1_g,$
and

$$
\begin{array}{rcl}
1_ga &= &(\delta_g \cdot 1_A)a\\
&= & 1_{\Bbbk G} \cdot (\delta_g \cdot 1_A)a \\
&= & \sum\limits_{e \in G_0} (\delta_e \cdot \delta_g \cdot 1_A)(\delta_e \cdot a) \\
&= & \sum\limits_{e \in G_0} (\delta_e \delta_g \cdot 1_A)(\delta_e \cdot 1_A)(\delta_e \cdot a) \\
&= & \sum\limits_{e \in G_0} (\delta_e \delta_g \cdot 1_A)(\delta_e \cdot a) \\
&= & (\delta_{r(g)}\delta_g \cdot 1_A)(\delta_{r(g)} \cdot a) \\
&= & (\delta_{r(g)g} \cdot 1_A)(\delta_{r(g)} \cdot a) \\
&= & (\delta_{g} \cdot 1_A)(\delta_g \delta_{g^{-1}} \cdot a) \\
&= & \delta_g \cdot \delta_{g^{-1}} \cdot a \\
&= & (\delta_g \delta_{g^{-1}} \cdot a)(\delta_{g} \cdot 1_A) \\
&= & (\delta_{r(g)} \cdot a)(\delta_{r(g) g} \cdot 1_A ) \\
&= & (\delta_{r(g)} \cdot a)(\delta_{r(g)} \delta_g \cdot 1_A ) \\
&= & \sum\limits_{e \in G_0} (\delta_e \cdot a)(\delta_e\ \delta_g \cdot 1_A) \\
&= & \sum\limits_{e \in G_0} \delta_e \cdot (a (\delta_g \cdot 1_A)) \\
&= & 1_{\Bbbk G} \cdot (a(\delta_g \cdot 1_A)) \\
&= & a(\delta_g \cdot 1_A)\\
&= & a 1_g
\end{array}
$$
 Note that the above sequence of equalities gives an important and useful relation for the partial action of $G$ on $A$, that is,
	\begin{eqnarray}
	(\delta_g \cdot 1_A) a = \delta_g \cdot \delta_{g^{-1}} \cdot a = a(\delta_g \cdot 1_A)
	\label{eq:1ga=dgdgia}
	\end{eqnarray}
for all $g\in G$ and $a\in A$, which implies
$$D_g=\delta_g\cdot A=(\delta_g\cdot1_A)(\delta_g\cdot A)\subseteq 1_gA=(\delta_g \cdot 1_A)A \overset{\eqref{eq:1ga=dgdgia}}{=}
\delta_g \cdot \delta_{g^{-1}} \cdot A \subseteq\delta_g \cdot A=D_g,$$ hence $D_g=1_gA$.
The last assertion is immediate.

\vt

{\bf Step 2}: $D_g=D_{r(g)}1_g$, in particular $D_g$  is an ideal of $D_{r(g)}$, for all $g\in G$.

\vu

It follows from \eqref{eq:1ga=dgdgia} and the symmetry of $\cdot$ that
\[
\begin{array}{rcl}
D_g &=& (\delta_g\cdot 1_A)A\\
&\overset{\eqref{eq:1ga=dgdgia}}{=} & \delta_g \cdot \delta_{g^{-1}} \cdot A\\
&=& (\delta_g \delta_{g^{-1}} \cdot A)(\delta_{g} \cdot 1_A)\\
&=& (\delta_{r(g)} \cdot A)(\delta_g \cdot 1_A )\\
&=&  D_{r(g)}1_g
\end{array}
\]

\vt

{\bf Step 3}: $\alpha_g$ is an isomorphism of algebras, for all $g\in G$:

\vu
	
It is clear from the above that $\alpha_g$ is a well defined linear map. Thus, it is enough to show that $\alpha_g$ is
multiplicative and $\alpha_g^{-1}=\alpha_{g^{-1}}$, for all $g\in G$.

\vu
	
For all $a,b \in A$, we have
$$
\begin{array}{rcl}
	\alpha_g(a b 1_{g^{-1}}) &= & \delta_g \cdot ab 1_{g^{-1}} \\
	&= & (\delta_g \cdot a 1_{g^{-1}}) (\delta_g \cdot b 1_{g^{-1}}) \\
	&= & \alpha_g(a 1_{g^{-1}}) \alpha_g(b 1_{g^{-1}})
\end{array}
$$

In order to show that $\alpha_g$ is an isomorphism, we need first to show that $\alpha_e$ is the identity in $D_e$ for all $e \in G_0$.

\vu

Notice that for any $h\in G$ and $a,b\in A$,
\begin{eqnarray}
	(\delta_h \cdot a)b = (\delta_h \cdot a)(\delta_h \cdot 1_A)b
	\Ref{eq:1ga=dgdgia} (\delta_h \cdot a)(\delta_h \cdot \delta_{h^{-1}} \cdot b)
	= \delta_h \cdot (a(\delta_{h^{-1}} \cdot b))
\label{conseq34}
\end{eqnarray}

\vu

Therefore, for $e\in G_0$ and $a\in A$ we have
$$
\begin{array}{rclr}
	a 1_e &=& (1_{\Bbbk G} \cdot a)1_e & \\
	&=& \sum\limits_{e' \in G_0} (\delta_{e'} \cdot a)(\delta_e \cdot 1_A) & \\
	&\overset{(\ref{conseq34})}{=}& \sum\limits_{e' \in G_0} \delta_{e'} \cdot (a(\delta_{e'} \cdot \delta_e \cdot 1_A)) &  (\text{taking} \ b=\delta_e \cdot 1_A )\\
	&=& \sum\limits_{e' \in G_0} \delta_{e'} \cdot (a(\delta_{e'} \delta_e \cdot 1_A)(\delta_{e'} \cdot 1_A)) &\\
	&=& \delta_e \cdot (a(\delta_e \cdot 1_A)) &\\
	&=& \delta_e \cdot a 1_e &\\
    &=& \alpha_e(a1_e)
\end{array}
$$

Finally, for all $a\in D_g$ we have
$$
\begin{array}{rcl}
	\alpha_g (\alpha_{g^{-1}} (a )) &=& \delta_g \cdot (\delta_{g^{-1}} \cdot a )\\
	&=& (\delta_g \delta_{g^{-1}} \cdot a)(\delta_g \cdot 1_A) \\
	&=& (\delta_{r(g)} \cdot a) 1_g \\
    &=& (\delta_{r(g)} \cdot a1_{r(g)}) 1_g\\
	&=& a1_{r(g)}1_g \\
	&=& a
\end{array}
$$

For the equality $\alpha_{g^{-1}}\alpha_g(a)=a$, for all $a\in D_{g^{-1}}$, one proceeds in a similar way.

\vt

{\bf Step 4}: $\alpha = (\{D_g\}_{g \in G},\{\alpha_g\}_{g \in G})$ is a partial groupoid action of $G$ on $A$.

\vu

We only need to check that $\alpha$ satisfies the conditions (i)-(iii) of Definition \ref{pag}.	
	
\vd
	
(i) It follows from Step 3.

\vd

(ii) For all $h,g\in G$ such that $d(g)=r(h)$ and $a\in A$, we have
$$
	\begin{array}{rcl}
		\alpha_{h^{-1}} (a 1_{g^{-1}} 1_h) &=& \alpha_{h^{-1}} (a 1_h)\alpha_{h^{-1}}(1_{g^{-1}} 1_h)\\
		&\overset{\eqref{mag}}{=}& \alpha_{h^{-1}} (a 1_h)  1_{h^{-1}} 1_{(gh)^{-1}}\in D_{(gh)^{-1}},\\
	\end{array}
$$
thus $\alpha_{h^{-1}}(D_{g^{-1}}\cap D_h) \subseteq D_{(gh)^{-1}}$.

\vd

(iii)  It follows from (ii) that if $x\in \alpha_{h^{-1}}(D_{g^{-1}}\cap D_h)$ then $\alpha_h(x)\in D_{g^{-1}}$ and $x = a 1_{h^{-1}} 1_{(gh)^{-1}}$,
for some $a\in A$.
Hence, the elements $\alpha_g\alpha_h(x)$ and $\alpha_{gh}(x)$ there exist and lie in $D_g\bigcap D_{gh}$, and
$$
	\begin{array}{rcl}
		\alpha_g(\alpha_h (x)) &=& \alpha_g(\delta_h \cdot a 1_{h^{-1}} 1_{(gh)^{-1}})\\
		 &=& \delta_g \cdot (\delta_h \cdot a 1_{h^{-1}} 1_{(gh)^{-1}})\\
		 &=& (\delta_g \delta_h \cdot a 1_{h^{-1}} 1_{(gh)^{-1}})(\delta_g\cdot 1_{A})\\
		 &=& (\delta_{gh} \cdot a 1_{h^{-1}} 1_{(gh)^{-1}})(\delta_g\cdot 1_{A})\\
		 &=& \alpha_{gh}(x) 1_g\\
		 &=& \alpha_{gh}(x)
	\end{array}
$$

\vt

{\bf Step 5}:  $A=\bigoplus\limits_{e\in G_0}D_e$.

\vu

Indeed, notice that $$A=1_{\Bbbk G} \cdot A=
\sum\limits_{e \in G_0} \delta_e \cdot A=\sum\limits_{e \in G_0} D_e,$$

and, since $$1_e1_f=(\delta_e\cdot 1_A)(\delta_f\cdot 1_A)\overset{\eqref{eq:1ga=dgdgia}}{=}\delta_e\cdot (\delta_e\cdot\delta_f\cdot1_A)=
\delta_e\cdot((\delta_e\cdot1_A)(\delta_e\delta_f\cdot 1_A))=0,$$ for all $e\neq f$ in $G_0$, it easily follows that
$D_e \cap (\sum\limits_{\stackrel{f \in G_0}{f \not = e}} D_{f})=0$.
\end{proof}

%
%

\section{partial Actions on the Ground Field}\label{field}

Partial actions of a weak Hopf algebra $H$ on the ground field $\Bbbk$ provide a large amount of examples
of partial actions. In this section we give the necessary and sufficient conditions for an action of $H$ on
$\Bbbk$ to be partial and, as an application, we describe all the partial actions of a groupoid algebra on
 $\Bbbk$.

\vu

It is clear that any action of $H$ on $\Bbbk$ is, in particular, a  $\Bbbk$-linear map from $H$ on  $\Bbbk$.
The question is: under what conditions a  $\Bbbk$-linear map from $H$ on  $\Bbbk$ defines a partial action of $H$ on  $\Bbbk$?
The answer is given in the following lemma.

\begin{lema}
\label{pcorpo}
Let $\lambda\colon H\to\Bbbk$ be a $\Bbbk$-linear map. Then,  $\lambda$ defines a partial action of $H$ on $\Bbbk$,
via $$h\cdot 1_\Bbbk=\lambda(h),\,\,\ \text{for all}\,\,\ h\in H,$$ if and only if
$$\lambda(1_H) = 1_{\Bbbk}\quad\text{and}\quad \lambda(h)\lambda(g) = \lambda(h_1) \lambda(h_2 g),\,\,\ \text{for all}\,\,\ g,h\in H.$$
\end{lema}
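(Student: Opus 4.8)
The plan is to use the fact that $\Bbbk$ is one-dimensional over itself, so that a partial action of $H$ on $\Bbbk$ is completely encoded by the linear functional $\lambda(h)=h\cdot 1_\Bbbk$, and each of the three axioms of Definition \ref{acaoparcial} becomes a bilinear identity that, after cancelling scalars, need only be checked at $a=b=1_\Bbbk$. Concretely, writing $a=\alpha 1_\Bbbk$ and $b=\beta 1_\Bbbk$ with $\alpha,\beta\in\Bbbk$, $\Bbbk$-linearity of $\cdot$ gives $h\cdot a=\alpha\lambda(h)$, so axiom (ii) reads $\alpha\beta\lambda(h)=\alpha\beta\lambda(h_1)\lambda(h_2)$ and axiom (iii) reads $\alpha\lambda(h)\lambda(k)=\alpha\lambda(h_1)\lambda(h_2 k)$, independently of the scalars.

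For the forward implication I would assume $\cdot$ is a partial action and set $\lambda(h)=h\cdot 1_\Bbbk$. Axiom (i) with $a=1_\Bbbk$ gives immediately $\lambda(1_H)=1_\Bbbk$. For the second identity I would apply axiom (iii) with $k=g$ and $a=1_\Bbbk$: the left-hand side is $h\cdot(g\cdot 1_\Bbbk)=h\cdot(\lambda(g)1_\Bbbk)=\lambda(g)\lambda(h)$ by $\Bbbk$-linearity, while the right-hand side is $(h_1\cdot 1_\Bbbk)(h_2 g\cdot 1_\Bbbk)=\lambda(h_1)\lambda(h_2 g)$, whence $\lambda(h)\lambda(g)=\lambda(h_1)\lambda(h_2 g)$. (Axiom (ii) at $a=b=1_\Bbbk$ only yields $\lambda(h)=\lambda(h_1)\lambda(h_2)$, which turns out to be subsumed, as noted below.)

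Conversely, given $\lambda(1_H)=1_\Bbbk$ and $\lambda(h)\lambda(g)=\lambda(h_1)\lambda(h_2 g)$, I would define $\cdot\colon H\otimes\Bbbk\to\Bbbk$ by $h\otimes a\mapsto\lambda(h)a$, which is visibly $\Bbbk$-linear. Axiom (i) is the first hypothesis; axiom (iii) is exactly the second hypothesis, since $h\cdot(k\cdot a)=\lambda(h)\lambda(k)a$ and $(h_1\cdot 1_\Bbbk)(h_2 k\cdot a)=\lambda(h_1)\lambda(h_2 k)a$; and axiom (ii) follows because specialising the second hypothesis at $g=1_H$ and using $\lambda(1_H)=1_\Bbbk$ gives $\lambda(h)=\lambda(h_1)\lambda(h_2)$, so that $h\cdot ab=\lambda(h)ab=\lambda(h_1)\lambda(h_2)ab=(h_1\cdot a)(h_2\cdot b)$.

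I do not expect a genuine obstacle here: the only points requiring care are the reduction to $a=b=1_\Bbbk$ via one-dimensionality of $\Bbbk$ over itself, and the bookkeeping observation that axiom (ii) is redundant once axiom (iii) holds — which is precisely why only the two stated conditions on $\lambda$ appear, rather than three.
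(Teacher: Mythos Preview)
Your proof is correct and follows essentially the same approach as the paper's own proof: both directions use that $\Bbbk$ is one-dimensional to reduce to computations at $1_\Bbbk$, derive the two conditions on $\lambda$ from axioms (i) and (iii), and in the converse recover axiom (ii) by specialising the second condition at $g=1_H$. Your write-up is slightly more explicit about the reduction via one-dimensionality and about why only two conditions are needed, but the logical content is the same.
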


\begin{proof}
Assume that $\cdot$ is a partial action. Then, $$\lambda(1_H) = 1_H \cdot 1_{\Bbbk} = 1_{\Bbbk}$$ and
$$\lambda(h) \lambda(g) = h \cdot (g \cdot 1_{\Bbbk}) =
(h_1 \cdot 1_{\Bbbk} )(h_2 g \cdot 1_{\Bbbk}) = \lambda(h_1) \lambda(h_2g).$$

\vu

Conversely, note that $1_H \cdot 1_{\Bbbk} = \lambda(1_H) = 1_{\Bbbk}$ and $h \cdot  (g \cdot 1_{\Bbbk}) = \lambda(h)\lambda(g) =
\lambda(h_1) \lambda(h_2g) = (h_1 \cdot 1_{\Bbbk})(h_2g \cdot 1_{\Bbbk})$. Taking $g = 1_H$ in this last equality we have the
third required condition.
\end{proof}

It is well known that, in the setting of Hopf algebra actions, the only global action on $\Bbbk$ is given by the counit $\varepsilon$.
This is not true for actions of weak Hopf algebras. In the following proposition we will give necessary and sufficient conditions
to obtain a global action of a weak Hopf algebra $H$ on $\Bbbk$.

\vu

\begin{prop}
Let  $\lambda\colon H\to \Bbbk$ be a $\Bbbk$-linear map and $\triangleright\colon H\otimes \Bbbk\to \Bbbk$ the $\Bbbk$-linear map
given by $h\triangleright 1_{\Bbbk}=\lambda(h)$. Then, $\triangleright$ is a global action of $H$ on $\Bbbk$ if and only
if $\lambda$ is a convolutional idempotent in $Alg_{\Bbbk}(H,\Bbbk)=\{f\in Hom_{\Bbbk}(H,\Bbbk) \mid f\ \text{is multiplicative}\}$.
\end{prop}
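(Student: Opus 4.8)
The plan is to show both implications directly from the characterization of partial actions on $\Bbbk$ given in Lemma \ref{pcorpo}, together with the criterion for a partial action to be global given in Lemma \ref{condglobal}. Recall that $\lambda$ being a convolutional idempotent in $Alg_{\Bbbk}(H,\Bbbk)$ means two things: $\lambda$ is multiplicative, i.e. $\lambda(hg)=\lambda(h)\lambda(g)$ for all $h,g\in H$ and $\lambda(1_H)=1_{\Bbbk}$; and $\lambda$ is idempotent for the convolution product, i.e. $\lambda*\lambda=\lambda$, which spelled out reads $\lambda(h_1)\lambda(h_2)=\lambda(h)$ for all $h\in H$.

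First I would prove the ``only if'' direction. Assume $\triangleright$ is a global action. Then it is in particular a partial action, so by Lemma \ref{pcorpo} we have $\lambda(1_H)=1_{\Bbbk}$ and $\lambda(h)\lambda(g)=\lambda(h_1)\lambda(h_2g)$ for all $g,h\in H$. Now I would use the defining property (iii) of a global action, $h\triangleright(g\triangleright 1_{\Bbbk})=hg\triangleright 1_{\Bbbk}$, which translates to $\lambda(h)\lambda(g)=\lambda(hg)$; this is exactly multiplicativity of $\lambda$, so $\lambda\in Alg_{\Bbbk}(H,\Bbbk)$. For the convolutional idempotency, I would feed multiplicativity back into the partial-action identity: $\lambda(h)=\lambda(h)\lambda(1_H)=\lambda(h_1)\lambda(h_2\cdot 1_H)=\lambda(h_1)\lambda(h_2)$, i.e. $\lambda*\lambda=\lambda$. (Alternatively one can see this from Lemma \ref{condexcpma} applied to the module algebra $\Bbbk$, which gives $\lambda(h)=\lambda(\El(h))$, but the computation above is more direct.) Hence $\lambda$ is a convolutional idempotent.

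Next I would prove the ``if'' direction. Assume $\lambda$ is multiplicative with $\lambda(1_H)=1_{\Bbbk}$ and $\lambda*\lambda=\lambda$. Then for all $g,h\in H$ we compute $\lambda(h_1)\lambda(h_2g)=\lambda(h_1)\lambda(h_2)\lambda(g)=\lambda(h)\lambda(g)$, using multiplicativity in the first step and convolutional idempotency in the second. Together with $\lambda(1_H)=1_{\Bbbk}$, Lemma \ref{pcorpo} yields that $\triangleright$ is a partial action of $H$ on $\Bbbk$. It remains to upgrade it to a global action, for which I would invoke Lemma \ref{condglobal}: it suffices to check $h\triangleright 1_{\Bbbk}=\El(h)\triangleright 1_{\Bbbk}$, i.e. $\lambda(h)=\lambda(\El(h))$ for all $h\in H$. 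Using the formula $\El(h)=\varepsilon(1_1h)1_2$ together with multiplicativity of $\lambda$ and $\lambda(1_H)=1$, and the counit/weak-bialgebra axioms (in particular axiom (iv) and the fact that $\lambda$ being an algebra map forces $\lambda(1_1)\otimes\lambda(1_2)$-type relations), one gets $\lambda(\El(h))=\varepsilon(1_1h)\lambda(1_2)$; then using that $\lambda$ is multiplicative one can rewrite $\varepsilon(1_1h)\lambda(1_2)=\lambda(1_1h)\lambda(1_2)$ — here one needs the compatibility that on $H_L$ the counit and $\lambda$ behave compatibly — and collapse this to $\lambda(1_1 h 1_2)=\lambda(1_H h 1_H)=\lambda(h)$ via $\Delta(1_H)$ being an idempotent-type element and multiplicativity. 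So $\triangleright$ is global.

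The main obstacle I expect is the last verification, namely showing $\lambda(\El(h))=\lambda(h)$ cleanly from convolutional idempotency and multiplicativity; the subtlety is that $\lambda$ need not be the counit, so one cannot simply use $\varepsilon$-identities, and one must carefully exploit the interplay between the coalgebra axioms defining $\El$ and the algebra-map property of $\lambda$ (in particular that $\lambda$ restricted to $H_L$ is a character and that $\Delta(1_H)\in H_R\otimes H_L$ by Lemma \ref{p2}). Everything else is a short symbolic manipulation using only Lemmas \ref{pcorpo} and \ref{condglobal} and the definition of convolution.
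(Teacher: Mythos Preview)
Your forward direction is correct and essentially matches the paper's argument.

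The converse, however, has a genuine gap. Your plan is to obtain a partial action via Lemma~\ref{pcorpo} and then upgrade to a global one via Lemma~\ref{condglobal}, which requires $\lambda(\El(h))=\lambda(h)$. But your justification of this identity is flawed at two points. First, the rewriting $\varepsilon(1_1h)\lambda(1_2)=\lambda(1_1h)\lambda(1_2)$ is unjustified: $\lambda$ and $\varepsilon$ are different maps, and there is no reason for $\varepsilon(1_1h)=\lambda(1_1h)$. Second, even granting that step, the collapse ``$\lambda(1_1 h 1_2)=\lambda(1_H h 1_H)=\lambda(h)$'' fails in a genuine weak Hopf algebra, since $\sum 1_1 h 1_2$ is \emph{not} equal to $h$ in general (e.g.\ in a groupoid algebra $\Bbbk G$, for $g\in G$ with $d(g)\neq r(g)$ one has $\sum_{e\in G_0}\delta_e\,\delta_g\,\delta_e=0$).

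The detour through Lemma~\ref{condglobal} is unnecessary and is what creates the difficulty. Once you know (via Lemma~\ref{pcorpo}) that $\triangleright$ is a partial action, conditions (i) and (ii) of a global action already hold; condition (iii) reads $h\triangleright(g\triangleright 1_\Bbbk)=hg\triangleright 1_\Bbbk$, i.e.\ $\lambda(h)\lambda(g)=\lambda(hg)$, which is exactly the assumed multiplicativity. This finishes the converse in one line. The paper does something even more direct: it bypasses Lemma~\ref{pcorpo} entirely and checks (i)--(iii) straight from the hypotheses, using $\lambda(1_H)=1_\Bbbk$ for (i), $\lambda*\lambda=\lambda$ for (ii), and multiplicativity for (iii).
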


\begin{proof}
Assume that $\triangleright$ is global. Then,  $\lambda(h)\lambda(g) = h \triangleright g \triangleright 1_{\Bbbk} =
h g \triangleright 1_{\Bbbk} = \lambda(hg)$ and $\lambda(1_H) = 1_H \triangleright 1_{\Bbbk} = 1_{\Bbbk}$, for all $g,h\in H$. Thus,
$\lambda \in Alg_{\Bbbk}(H,\Bbbk)$. Moreover, $\lambda \ast \lambda (h) = \lambda(h_1)\lambda(h_2) =
(h_1 \triangleright 1_{\Bbbk})(h_2 \triangleright 1_{\Bbbk}) = h \triangleright 1_{\Bbbk} = \lambda(h)$, for all $h\in H$, that is,
$\lambda \ast \lambda = \lambda$.

\vu

Conversely, since $\lambda$ is a map of $\Bbbk$-algebras, for all $a,b$ in $\Bbbk$ and $g,h$ in $H$, we have
\begin{enumerate}
\item[(i)] $1_H \triangleright 1_{\Bbbk} = \lambda(1_H) = 1_{\Bbbk}$,

\vu

\item[(ii)] $h \triangleright ab = ab \lambda(h) = ab (\lambda \ast \lambda)(h) = (h_1 \triangleright a)(h_2 \triangleright b)$,

\vu

\item[(iii)] $h \triangleright g \triangleright a = a \lambda(h) \lambda(g) = a \lambda(hg) = hg \triangleright a $.
\end{enumerate}
\end{proof}
The example below illustrates this previous result.

\vu

\begin{ex}
Let $G$ be a groupoid given by a disjoint union of finite groups $G_1,...,G_n$.
Choose $G_j$ one of these subgroups and define $\lambda\colon {\Bbbk}G \to {\Bbbk}$ by $\lambda(g) = 1$
if $g \in G_j$ and $\lambda(g) = 0$ otherwise. It is straightforward to check
that $\lambda$ is a convolutive idempotent in $Alg_{\Bbbk}(\Bbbk G,\Bbbk)$.\qed
\end{ex}

Note that the counit $\varepsilon$ of a weak Hopf algebra is not an algebra homomorphism, so it
does not turn $\Bbbk$ on an $H$-module algebra. The next proposition gives a necessary and sufficient
condition for $\varepsilon$ to define a partial (so, global) action on $\Bbbk$.

\begin{prop}
A weak Hopf algebra $H$ is a Hopf algebra if and only if $\varepsilon$ defines a partial action on $\Bbbk$. In this case, $\varepsilon$
is the unique convolutional idempotent in $Alg_{\Bbbk}(H,\Bbbk)$.
\end{prop}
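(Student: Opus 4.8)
The plan is to prove both implications using the characterization of partial actions on $\Bbbk$ given in Lemma~\ref{pcorpo}, applied to $\lambda = \varepsilon$. Recall that $\varepsilon$ always satisfies $\varepsilon(1_H) = 1_\Bbbk$, so by Lemma~\ref{pcorpo} the map $\varepsilon$ defines a partial action on $\Bbbk$ if and only if $\varepsilon(h)\varepsilon(g) = \varepsilon(h_1)\varepsilon(h_2 g)$ for all $g,h \in H$. But by counitality $\varepsilon(h_1)\varepsilon(h_2 g) = \varepsilon(h_1 h_2 g)$\,--- wait, that is not quite right; rather $\varepsilon(h_1)\varepsilon(h_2 g) = \varepsilon((\varepsilon(h_1)h_2)g) = \varepsilon(hg)$ by the counit axiom. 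So the defining condition reduces cleanly to $\varepsilon(hg) = \varepsilon(h)\varepsilon(g)$ for all $g,h \in H$, i.e.\ $\varepsilon$ is multiplicative. Thus the proposition is equivalent to the assertion: $H$ is a Hopf algebra $\iff$ $\varepsilon$ is an algebra homomorphism.

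For the direction ($\Rightarrow$): if $H$ is a Hopf algebra then $\varepsilon$ is multiplicative by definition of a bialgebra, hence by the reduction above $\varepsilon$ defines a partial action on $\Bbbk$. For the direction ($\Leftarrow$): suppose $\varepsilon$ is multiplicative. The key point is to show $\Delta(1_H) = 1_H \otimes 1_H$, since a weak bialgebra with grouplike unit is an ordinary bialgebra, and then condition (viii) together with (vi)–(vii) forces $S$ to be a genuine antipode (indeed $\varepsilon_L(h) = \varepsilon(1_1 h)1_2 = \varepsilon(1_1)\varepsilon(h)1_2 = \varepsilon(h)1_H$ once $1_1\otimes 1_2 = 1_H\otimes 1_H$, so $h_1 S(h_2) = \varepsilon(h)1_H$, and symmetrically from (vii)). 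To get $\Delta(1_H) = 1_H \otimes 1_H$: write $\Delta(1_H) = 1_1 \otimes 1_2 \in H_R \otimes H_L$ (Lemma~\ref{p2}). Apply $\varepsilon \otimes \mathrm{id}$ and use multiplicativity: one expects $1_1 \varepsilon(1_1')\, \cdots$-type manipulations. Concretely, from axiom (v), $(\Delta(1_H)\otimes 1_H)(1_H \otimes \Delta(1_H)) = \Delta^2(1_H) = 1_1 \otimes 1_2 \otimes 1_3$, and applying $\varepsilon$ to the middle tensorand gives $1_1 \varepsilon(1_2 \cdot 1_1') \otimes 1_2'$ for a second copy; multiplicativity collapses $\varepsilon(1_2 1_1')$ to $\varepsilon(1_2)\varepsilon(1_1')$, and then counitality on each factor yields $1_1 \otimes 1_2 = 1_1\varepsilon(1_2) \otimes 1_H \cdot (\text{second factor counit}) = 1_H \otimes 1_H$. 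Finally, the uniqueness claim: if $\varepsilon$ is multiplicative then $H$ is a Hopf algebra, and for any convolutional idempotent $f \in Alg_\Bbbk(H,\Bbbk)$ one has $f = f * \varepsilon = f * f = f$... more usefully, $\varepsilon = \varepsilon * f$ need not hold directly, so instead argue: for $f \in Alg_\Bbbk(H,\Bbbk)$ with $f * f = f$, convolve with the convolution inverse, which exists because $f(1)f(1) = f(1)$ and $f(1) = f * f(1)$... the clean route is $f = \varepsilon * f = f * f$ combined with $f$ invertible in convolution when $H$ is Hopf — actually any $f\in Alg_\Bbbk(H,\Bbbk)$ with $f*f = f$ satisfies $f = f * \varepsilon = f*(f * \bar f) = (f*f)*\bar f = f * \bar f = \varepsilon$ once we know $f$ has a convolution inverse $\bar f = f \circ S$.

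I expect the main obstacle to be the implication ($\Leftarrow$), specifically the verification that multiplicativity of $\varepsilon$ forces $\Delta(1_H)=1_H\otimes 1_H$; the bookkeeping with $\Delta^2(1_H)$ and the two idempotent-splitting axiom~(v) requires care to apply the counit in the correct slot and to invoke Lemma~\ref{p2} (so that the relevant factors land in $H_R$ and $H_L$, where $\varepsilon$ restricted is still well-behaved). Once $\Delta(1_H) = 1_H \otimes 1_H$ is in hand, the remaining steps (that $H$ is then an ordinary bialgebra, that $S$ becomes an antipode, and the uniqueness of the convolutional idempotent via $f = f * (f\circ S) = \varepsilon$) are routine.
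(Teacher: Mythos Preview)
Your proposal is correct and follows essentially the same approach as the paper: both reduce the question, via Lemma~\ref{pcorpo}, to whether $\varepsilon$ is multiplicative (using the same counitality step $\varepsilon(h_1)\varepsilon(h_2 g) = \varepsilon(hg)$), and both obtain uniqueness from the group structure on $Alg_\Bbbk(H,\Bbbk)$ under convolution---your inverse $\bar f = f\circ S$ is exactly the inverse in that group. The paper simply asserts that multiplicativity of $\varepsilon$ implies $H$ is a Hopf algebra without further justification, so the step you flag as the ``main obstacle'' (deriving $\Delta(1_H)=1_H\otimes 1_H$ from axiom~(v) by applying $\varepsilon$ to the middle tensorand) is additional detail that the paper omits as well known.
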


\begin{proof}
If $H$ is a Hopf Algebra, then $\varepsilon$ is an idempotent element in $Alg_{\Bbbk}(H,\Bbbk)$ and
so it defines an action on $\Bbbk$. Conversely,
if $\varepsilon$ defines a partial action on $\Bbbk$, then $\varepsilon (1_H)=1_{\Bbbk}$ and
$\varepsilon(h)\varepsilon(g) = \varepsilon(h_1)\varepsilon(h_2g) =
\varepsilon(\varepsilon(h_1)h_2g) = \varepsilon(hg)$, which implies that $H$ is a Hopf algebra.

\vu

For the last assertion, it is enough to see that if $H$ is a Hopf algebra then $Alg_{\Bbbk}(H,\Bbbk)$ is a group with the convolution product
and $\varepsilon$ is its unit.
\end{proof}

We end this section presenting a complete description of all partial actions of a groupoid algebra $\Bbbk G$ on $\Bbbk$.

\vu

\begin{prop}
Let $G$ be a groupoid such that $G_0$ is finite, and $\lambda\colon \Bbbk G\to \Bbbk$ a $\Bbbk$-linear map.
Then, $\lambda$ defines a partial action of $\Bbbk G$ on $\Bbbk$, as characterized in Lemma 4.1, if and only if the set
$$V = \{ v \in G \ | \ \delta_v \cdot 1_{\Bbbk} = 1_{\Bbbk}=\delta_{r(v)}\cdot 1_{\Bbbk} \}$$
is a group and $\delta_g\cdot 1_\Bbbk =0$, for all $g\in G\setminus V$.
\end{prop}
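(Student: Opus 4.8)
The plan is to use the characterization from Lemma 4.1, so I work directly with a $\Bbbk$-linear map $\lambda\colon\Bbbk G\to\Bbbk$ satisfying $\lambda(1_{\Bbbk G})=1_\Bbbk$ and $\lambda(h)\lambda(g)=\lambda(h_1)\lambda(h_2g)$ for all $h,g$. Since $\Delta(\delta_g)=\delta_g\otimes\delta_g$, the second condition, evaluated on basis elements $h=\delta_g$, $g=\delta_k$, reads
$$\lambda(\delta_g)\lambda(\delta_k)=\lambda(\delta_g)\lambda(\delta_g\delta_k),$$
which is $\lambda(\delta_g)\bigl(\lambda(\delta_k)-\lambda(\delta_g\delta_k)\bigr)=0$. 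Writing $c_g:=\lambda(\delta_g)\in\Bbbk$, the data of a partial action is exactly: a function $g\mapsto c_g$ with $\sum_{e\in G_0}c_e=1$ and, for all $g,k\in G$, $c_g\bigl(c_k-c_{gk}\bigr)=0$ whenever $d(g)=r(k)$, together with $c_g\,c_k=0$ whenever $d(g)\neq r(k)$ (here $\delta_g\delta_k=0$, so the RHS is $0$). First I would record these reformulated identities, since everything else is bookkeeping with them.

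Next I would analyze the support $S:=\{g\in G : c_g\neq0\}$ and show $S=V$. From $c_g\,c_k=0$ for $d(g)\neq r(k)$: if $g,k\in S$ then $d(g)=r(k)$, so $S$ lies inside a single ``connected component'' in the sense that all elements of $S$ are composable in the required patterns; in particular all identities appearing as $d(g)$ or $r(g)$ for $g\in S$ coincide — call it $e_0$ — and $\sum_{e\in G_0}c_e=1$ forces $c_{e_0}=1$ and $c_e=0$ for $e\neq e_0$. For $g\in S$ we have $d(g)=r(g)=e_0$ (composing $g$ with $e_0$ on both sides is forced), hence $g$ is a loop at $e_0$, i.e. lies in the isotropy group $G(e_0)=\{g:d(g)=r(g)=e_0\}$. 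Taking $k=g^{-1}$ (legitimate since $d(g)=r(g^{-1})$) in $c_g(c_{g^{-1}}-c_{e_0})=0$ with $c_g\neq0$ gives $c_{g^{-1}}=c_{e_0}=1$, so $g^{-1}\in S$; and for $g,k\in S$, $c_g(c_k-c_{gk})=0$ with $c_g\neq0$ gives $c_{gk}=c_k=1\neq0$, so $gk\in S$. Thus $S$ is a subgroup of $G(e_0)$ containing $e_0$, every element of $S$ has $\lambda$-value $1$, and $r$ of every element of $S$ is $e_0$ with $c_{e_0}=1$ — precisely the defining conditions of $V$. Hence $S=V$, $V$ is a group, and $c_g=0$ (i.e. $\delta_g\cdot1_\Bbbk=0$) for $g\notin V$.

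For the converse, given that $V$ is a group (necessarily $V\subseteq G(e_0)$ for its unique identity $e_0$, and $r(v)=e_0$ for all $v\in V$ by definition of $V$) and $c_g=0$ off $V$, I would define $\lambda(\delta_g)=1$ for $g\in V$ and $0$ otherwise and verify the two conditions of Lemma 4.1. The normalization $\sum_{e\in G_0}c_e=1$ holds because among identities only $e_0\in V$, with $c_{e_0}=1$. For the multiplicativity-type identity $c_g(c_k-c_{gk})=0$: if $c_g=0$ it is trivial; if $c_g\neq0$ then $g\in V$, so $d(g)=e_0$, and $c_k\neq c_{gk}$ would require one of $k,gk$ in $V$ — but $V$ being a group with $r$-value $e_0$ forces $k\in V\iff gk\in V$ (left multiplication by $g\in V$ is a bijection of $V$, and conversely $gk\in V$ gives $k=g^{-1}(gk)\in V$), so $c_k=c_{gk}$; and the case $d(g)\neq r(k)$ with $c_g\neq0\neq c_k$ cannot occur since $g,k\in V$ would give $d(g)=e_0=r(k)$.

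The main obstacle I anticipate is the bookkeeping in the forward direction showing that the support cannot ``spread'' across different identities and that $V$ as defined (with its somewhat asymmetric condition involving $r(v)$) coincides exactly with the support; one must be careful that an element $g$ with $c_g\neq0$ really does satisfy $d(g)=r(g)$ and not merely lie in some connected component, and that $c_{r(g)}=1$ rather than just $r(g)$ being the common identity. All of this follows from feeding identities and inverses into $c_g(c_k-c_{gk})=0$ and $c_gc_k=0$, but the order of deductions needs care.
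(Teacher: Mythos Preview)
Your proposal is correct and follows essentially the same route as the paper: both use the characterization of Lemma~4.1, translate it into the scalar conditions $c_g c_k = c_g\lambda(\delta_g\delta_k)$, and then do elementary bookkeeping in the groupoid to identify the support as a group.

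The only organizational difference worth noting is this: the paper's first move is the idempotency computation
\[
c_g \;=\; c_g\!\sum_{e\in G_0}c_e \;=\; \sum_{e\in G_0} c_g\,\lambda(\delta_g\delta_e) \;=\; c_g\,c_{g\cdot d(g)} \;=\; c_g^2,
\]
which immediately forces $c_g\in\{0,1\}$ since $\Bbbk$ is a field, and then verifies closure of $V$ under products and inverses directly. You instead introduce the support $S=\{g:c_g\neq 0\}$, use the non-composability relation $c_gc_k=0$ (for $d(g)\neq r(k)$) to pin $S$ to a single isotropy group, and recover $c_g=1$ on $S$ from the normalization $\sum_e c_e=1$. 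Your organization has the small virtue of making the clause ``$\delta_g\cdot 1_\Bbbk=0$ for $g\notin V$'' explicit (via $S=V$), whereas in the paper this is left implicit in the idempotency step. Either way the content is the same, and your anticipated ``obstacle'' about the support not spreading across identities is exactly what the relation $c_gc_k=0$ for $d(g)\neq r(k)$ (applied with $k=g$) resolves.
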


\begin{proof}
Assume that  $\lambda$ defines a partial action on $\Bbbk$, given by $\delta_g\cdot 1_\Bbbk=\lambda(g)$,
for all $g\in G$. Then, it follows from the equality
$$(\delta_g\cdot 1_\Bbbk)(\delta_h\cdot 1_\Bbbk)= (\delta_g\cdot1_\Bbbk)(\delta_g\delta_h\cdot1_\Bbbk),$$ that
$$
\begin{array}{rcl}
\delta_g\cdot1_{\Bbbk} &=& (\delta_g\cdot1_{\Bbbk})( 1_{\Bbbk G}\cdot 1_{\Bbbk})\\
&=& \sum\limits_{e\in G_0}(\delta_g\cdot1_{\Bbbk})(\delta_e\cdot1_{\Bbbk})\\
&=& \sum\limits_{e\in G_0}(\delta_g\cdot1_{\Bbbk})(\delta_g\delta_e\cdot1_{\Bbbk})\\
&=& (\delta_g\cdot1_{\Bbbk})(\delta_g\delta_{d(g)}\cdot1_{\Bbbk})\\
&=& (\delta_g\cdot1_{\Bbbk})^2.
\end{array}
$$

\vu

Thus,  $\delta_g\cdot1_\Bbbk$ is an idempotent in $\Bbbk$,
and therefore equal to either $1_\Bbbk$ or $0$, for all $g\in G$. Furthermore, the equality $\lambda(1_{\Bbbk G})=1_\Bbbk$
ensures that $V\neq\emptyset$.

\vu

Now, we show that $V$ is a group.

\vu

(i) For all $g,h\in V$, the product $gh$ exists and lies in $V$. This is an immediate consequence of the following expression
$$1_{\Bbbk}=\lambda(\delta_g) \lambda(\delta_h) =\lambda(\delta_g)\lambda(\delta_g \delta_h)=\lambda(\delta_g\delta_h).$$

\vu

(ii) For all $g\in V$, the element $g^{-1}$ lies in $V$. Indeed, since $g\in V$ we have
$$\lambda(\delta_{g^{-1}})=\lambda(\delta_g) \lambda(\delta_{g^{-1}}) =
\lambda(\delta_g) \lambda(\delta_{g g^{-1}}) = \lambda(\delta_g)\lambda(\delta_{r(g)}) = 1_{\Bbbk}.$$

\vu

Conversely, assume that $V$ is group and let $e_V$ denote its identity element. Also, assume that
$\delta_g\cdot 1_\Bbbk = 0$, for all $g\in G\setminus V$. Under these assumptions we have, in particular, that
$r(g)=d(g)=e_V$, for all $g\in V$,
and $\delta_e\cdot1_\Bbbk=0$, for all $e\in G_0$, $e\neq e_V$. Thus,
$\lambda(1_{\Bbbk G})=\sum_{e\in G_0}\lambda(\delta_e)=\lambda(\delta_{e_V})=1_\Bbbk,$ and
it is straightforward to check that $\lambda(\delta_g)\lambda(\delta_h)=\lambda(\delta_g)\lambda(\delta_g\delta_h)$.
\end{proof}

\begin{ex}
Let $G = G_1 \cup G_2$ be the groupoid given by the disjoint union of two groups $G_1$ and $G_2$.
Any subgroup $V$ of $G_1$ (or $G_2$) defines a partial action of $\Bbbk G$ on $\Bbbk$, given by $\lambda(\delta_g) = \delta_{g,V}$, for all $g$ in $G$, where
$\delta_{g,V} = 1_{\Bbbk}$ if $g \in V$ and $\delta_{g,V} = 0$ otherwise.
\end{ex}

\section{Globalization  of Partial Actions}\label{glob}

In this section we show that any partial action of a weak Hopf algebra can be obtained from a global one. Particularly
in this section, the notation $\cdot$ for partial actions and $\rhd$ for global ones is crucial.

\vu

First of all, given a global action we will see how to construct a partial one from it. The method to do this is given
in the following lemma.

\begin{lema}\label{induzida}
Let  $B$ be an $H$-module algebra via $\triangleright\colon H\otimes B\to B$.
Let $A$ be a right ideal of $B$ which is also  an algebra with unit $1_A$.
Then, the $\Bbbk$-linear map $\cdot\colon H\otimes A\to A$ given by $$h \cdot a =  1_{A} (h \triangleright a)$$ is a partial action of $H$ on $A$.
\end{lema}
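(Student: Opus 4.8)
The plan is to verify directly that the map $h\cdot a = 1_A(h\triangleright a)$ satisfies the three axioms (i)--(iii) of Definition \ref{acaoparcial}. Linearity is clear since $\triangleright$ is $\Bbbk$-linear and multiplication by $1_A$ in $A$ is $\Bbbk$-linear; well-definedness (that the output lands in $A$) follows because $A$ is a right ideal of $B$, so $1_A(h\triangleright a)\in A$ whenever $1_A\in A$ and $h\triangleright a\in B$. Axiom (i) is immediate: $1_H\cdot a = 1_A(1_H\triangleright a) = 1_A a = a$, using the module-algebra property $1_H\triangleright a = a$ and that $1_A$ is the unit of $A$.

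For axiom (ii), I would compute $h\cdot ab = 1_A(h\triangleright ab) = 1_A(h_1\triangleright a)(h_2\triangleright b)$ using property (ii) of the global action. The goal is to match this with $(h_1\cdot a)(h_2\cdot b) = 1_A(h_1\triangleright a)1_A(h_2\triangleright b)$. The key move is to absorb the interior $1_A$: since $a\in A$ and $A$ is a right ideal, $h_1\triangleright a$ need not lie in $A$, but $1_A(h_1\triangleright a)\in A$, hence $1_A(h_1\triangleright a) = 1_A(h_1\triangleright a)1_A$ (any element of $A$ is fixed on the right by $1_A$). Rearranging $1_A(h_1\triangleright a)(h_2\triangleright b) = 1_A(h_1\triangleright a)1_A(h_2\triangleright b)$ then gives exactly $(h_1\cdot a)(h_2\cdot b)$.

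For axiom (iii), start from $h\cdot(k\cdot a) = 1_A\big(h\triangleright(1_A(k\triangleright a))\big) = 1_A(h_1\triangleright 1_A)(h_2\triangleright(k\triangleright a))$ by property (ii) of $\triangleright$, and then $= 1_A(h_1\triangleright 1_A)(h_2k\triangleright a)$ by property (iii). On the other side, $(h_1\cdot 1_A)(h_2k\cdot a) = 1_A(h_1\triangleright 1_A)\,1_A(h_2k\triangleright a)$; as in axiom (ii), use the idempotent/right-ideal trick to insert $1_A$ after $h_1\triangleright 1_A$ (legitimate because $1_A(h_1\triangleright 1_A)\in A$), and the two expressions coincide.

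The main obstacle — really the only subtlety — is the repeated use of the identity $1_A x = (1_A x)1_A$ for $x\in B$, i.e. the fact that $1_A B\cap A$-type elements absorb $1_A$ on the right; this rests squarely on $A$ being a \emph{right} ideal with its own unit, and it is exactly where that hypothesis is used. Once that observation is isolated and stated once, axioms (ii) and (iii) are routine Sweedler-notation manipulations. I would not expect to need any of the weak Hopf structure (antipode, $\varepsilon_L$, etc.) here — the lemma is purely about restricting a global action to a unital right ideal, so the argument is formally the same as in the classical Hopf case.
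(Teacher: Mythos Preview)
Your proposal is correct and matches the paper's proof essentially line for line: the paper verifies (i)--(iii) exactly as you describe, inserting the interior $1_A$ in (ii) and (iii) via the observation that $1_A(h_1\triangleright a)\in A$ (since $A$ is a right ideal) and hence is fixed on the right by $1_A$. Your remark that no weak Hopf structure is needed is also in line with the paper, which uses only the module-algebra axioms of $\triangleright$.
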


\begin{proof} {$ { } $} For every $a,b \in A$ and $g, h \in H$, we have

\begin{itemize}
	\item[(i)] $1_H \cdot a =  1_{A} (1_H \triangleright a) =  1_{A} a =a$

\vu

	\item[(ii)] $h \cdot (ab) =  1_{A} (h \triangleright ab) =  1_{A} (h_1 \triangleright a)(h_2 \triangleright b) =
1_{A} (h_1 \triangleright a)  1_{A} (h_2 \triangleright b) = (h_1 \cdot a)(h_2 \cdot b)$

\vu

	\item[(iii)] $h \cdot (g \cdot a) =  1_{A} (h \triangleright (g \cdot a)) =  1_{A} (h \triangleright  1_{A} (g \triangleright a)) =
1_{A} (h_1 \triangleright  1_{A}) (h_2 g \triangleright a) =  1_{A} (h_1 \triangleright  1_{A})  1_{A} (h_2 g \triangleright a) =
(h_1 \cdot  1_{A}) (h_2 g \cdot a))$
\end{itemize}
\end{proof}

The partial action $\cdot$ of $H$ on $A$, obtained by the method given above, is called
\emph{induced} by the action $\triangleright$.

\begin{dfn}
Let $A$ be a partial $H$-module algebra. We say that a pair $(B,\theta)$ is
a \emph{globalization of $A$} if $B$ is an $H$-module algebra via $\triangleright\colon H\otimes B\to B$, and

\begin{itemize}
\item[(i)] $\theta \colon A \to B$ is a monomorphism of algebras such that $\theta(A)$ is a right ideal of $B$,

\vu

\item[(ii)] the partial action on $A$ is equivalent to the partial action induced by $\triangleright$ on $\theta(A)$, that is,
$\theta(h \cdot a) = h \cdot \theta(a) = \theta( 1_{A})(h \triangleright \theta(a))$,

\vu

\item[(iii)] $B$ is the $H$-module algebra generated by $\theta(A)$, that is, $B = H \triangleright \theta(A)$.
\end{itemize}
\end{dfn}

\vu

Notice that in the above definition as well as in Lemma \ref{induzida} we do not need to require $B$ to be unital. 

\vu
The existence of such a globalization will be ensured by the construction presented in the sequel.

\vu

We start by taking the convolution algebra $\mathcal{F} = Hom (H,A)$, which is an $H$-module algebra with the action
given by $(h \triangleright f)(k) = f(kh)$, for all $f\in \mathcal{F}$ and $h,k\in H$. Let $\varphi\colon A\to \mathcal{F}$ be the
map given by $\varphi(a)\colon h\mapsto h\cdot a$, for all $a\in A$ and $h\in H$. Put $B=H\triangleright\varphi(A)$.

\vu

\begin{prop}
The pair $(B,\varphi)$ is a globalization of $A$.
\end{prop}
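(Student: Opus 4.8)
The plan is to verify the three conditions in the definition of globalization for the pair $(B,\varphi)$, where $B = H \triangleright \varphi(A) \subseteq \mathcal{F} = \mathrm{Hom}(H,A)$ with the action $(h \triangleright f)(k) = f(kh)$. First I would check that $B$ is actually an $H$-submodule subalgebra of $\mathcal F$: since $\mathcal F$ is an $H$-module algebra, it suffices to show $B$ is closed under multiplication, and this is where the weak Hopf structure enters. Using condition (ii) of the partial action together with Lemma \ref{lemaadicional} and the identities of Lemma \ref{p2}, one computes for $f = h \triangleright \varphi(a)$, $g = k \triangleright \varphi(b)$ that the convolution product $(h \triangleright \varphi(a))(k \triangleright \varphi(b))$ lies again in $H \triangleright \varphi(A)$; the key manipulation is to move a copy of $\Delta(1_H)$ out of the way, rewriting $f*g$ evaluated at an element of $H$ in terms of $\varphi$ applied to a product in $A$ and then pulling an $H$-action back out front. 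Closure under the $H$-action is immediate from $h \triangleright (k \triangleright \varphi(a)) = hk \triangleright \varphi(a)$.

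Next I would prove that $\varphi$ is a monomorphism of algebras onto a right ideal of $B$. Multiplicativity of $\varphi$ is exactly condition (ii) of Definition \ref{acaoparcial}: $\varphi(ab)(h) = h \cdot ab = (h_1 \cdot a)(h_2 \cdot b) = (\varphi(a) * \varphi(b))(h)$. Injectivity follows by evaluating at $1_H$: $\varphi(a)(1_H) = 1_H \cdot a = a$, so $\varphi(a) = 0$ forces $a = 0$; the same evaluation shows $\varphi(1_A)$ is a left unit for $\varphi(A)$. To see $\varphi(A)$ is a right ideal of $B$, I must show $\varphi(a) * (h \triangleright \varphi(b)) \in \varphi(A)$ for all $a,b \in A$, $h \in H$. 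Evaluating at $k \in H$ and using Sweedler notation this reads $(\varphi(a) * (h \triangleright \varphi(b)))(k) = (k_1 \cdot a)(k_2 h \cdot b)$, and by Remark \ref{nonunital} this equals $k \cdot (a(h \cdot b)) = \varphi(a(h\cdot b))(k)$; hence the product is $\varphi(a(h \cdot b)) \in \varphi(A)$, proving the right ideal property. This computation is the technical heart of the argument.

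Then condition (ii) of the globalization definition — that the original partial action agrees with the one induced on $\varphi(A)$ — I would verify by evaluating both sides at an arbitrary $k \in H$. On one hand $\varphi(h \cdot a)(k) = k \cdot (h \cdot a) = (k_1 \cdot 1_A)(k_2 h \cdot a)$ by condition (iii) of the partial action. On the other hand, the induced action gives $\varphi(1_A) * (h \triangleright \varphi(a))$ evaluated at $k$, which is $(k_1 \cdot 1_A)(k_2 h \cdot a)$ as well; so the two coincide. Condition (iii), namely $B = H \triangleright \varphi(A)$, holds by the very definition of $B$. Assembling these pieces establishes that $(B,\varphi)$ is a globalization.

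The main obstacle I anticipate is the closure of $B$ under multiplication: unlike the ordinary Hopf case, the idempotent $\Delta(1_H)$ is not $1_H \otimes 1_H$, so one cannot blithely pull actions in and out of convolution products, and one must carefully insert factors of $1_1 \otimes 1_2$ (invoking Lemma \ref{lemaadicional}, Lemma \ref{noHRcola}, and $\Delta(1_H) \in H_R \otimes H_L$ from Lemma \ref{p2}) to land back inside $H \triangleright \varphi(A)$. Everything else reduces to short Sweedler-notation computations using the partial action axioms and Remark \ref{nonunital}.
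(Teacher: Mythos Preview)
Your plan matches the paper's proof almost verbatim: the paper proves (i) $\varphi$ is a multiplicative monomorphism with $\varphi(h\cdot a)=\varphi(1_A)\ast(h\triangleright\varphi(a))$, (ii) $\varphi(A)$ is a right ideal of $B$ via exactly the computation you give (using Remark~\ref{nonunital}), and (iii) $B$ is an $H$-module subalgebra of $\mathcal F$. The only place your outline diverges is the closure of $B$ under the convolution product: the paper does \emph{not} argue pointwise with Lemmas~\ref{lemaadicional}, \ref{p2}, \ref{noHRcola}, but instead invokes Lemma~\ref{2.25}(i) directly in $\mathcal F$ (which is a global, hence partial, $H$-module algebra) to obtain the explicit formula
\[
(h\triangleright\varphi(a))\ast(k\triangleright\varphi(b)) \;=\; h_1\triangleright\varphi\bigl(a\,(S(h_2)k\cdot b)\bigr),
\]
which visibly lies in $H\triangleright\varphi(A)$. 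The lemmas you cite do not by themselves produce the antipode needed to ``pull an $H$-action back out front''; that move is precisely Lemma~\ref{2.25}(i) (resting on identity~\eqref{p4-1}). Once you replace your vague pointwise manipulation by an appeal to Lemma~\ref{2.25}(i), your proof and the paper's are identical.
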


\begin{proof}\

\vu

(i)\, $\varphi$ is an algebra monomorphism such that $\varphi(h \cdot a) =
\varphi( 1_{A}) \ast (h \triangleright \varphi(a)),$ for all $h \in H$ and $a \in A$. Indeed,
\begin{itemize}
\item[-] $\varphi$ is clearly $\Bbbk$-linear,

\vu

\item[-] $\varphi$ is injective because $1_H \cdot a = a$,

\vu

\item[-] $\varphi(ab)(h) = h \cdot ab = (h_1 \cdot a)(h_2 \cdot b) =
\varphi(a)(h_1) \varphi(b)(h_2) = [\varphi(a) \ast \varphi(b)] (h),$ for all $a,b\in A$ and $h\in H$,

\vu

\item[-] $(\varphi( 1_{A}) \ast (h \triangleright \varphi(a)))(k) = (k_1 \cdot  1_{A})(k_2 h \cdot a) =
k \cdot h \cdot a = \varphi(h \cdot a)(k)$, for all $k\in H$.
\end{itemize}

\vd

(ii)\, $\varphi(A)$ is a right ideal of $B$. Indeed,
\[
\begin{array}{rcl}
\varphi(b) \ast (h \triangleright \varphi(a)) & =& \varphi(b) \ast \varphi( 1_{A}) \ast (h \triangleright \varphi(a)) \\
 & =& \varphi(b) \ast \varphi(h \cdot a) \\
 & =& \varphi(b(h \cdot a)),
\end{array}
\] for all $a,b\in A$ and $h\in H$.

\vd

(iii) $B$ is an $H$-module algebra.

\vu

In fact, $B$ is clearly a vector subspace of $\mathcal{F}$ which is invariant under the action $\triangleright$, and
$$
\begin{array}{rcl}
(h \triangleright \varphi(a))\ast(k \triangleright \varphi(b)) & \Ref{2.25}& h_1 \triangleright (\varphi(a) \ast (S(h_2)k \triangleright \varphi(b))) \\
	& =& h_1 \triangleright (\varphi(a) \ast \varphi( 1_{A}) \ast (S(h_2)k \triangleright \varphi(b))) \\
	& =& h_1 \triangleright (\varphi(a) \ast \varphi(S(h_2)k \cdot b)) \\
	& =& h_1 \triangleright (\varphi(a (S(h_2)k \cdot b))),
\end{array}
$$ for all $a,b\in A$ and $h,k\in H$.
\end{proof}

\vd

The pair  $(B, \varphi)$, as constructed above, is called the \emph{standard globalization} of $A$.

\begin{prop}\label{idealstd}
With the above notations, a partial action on $A$ is symmetric if and only if $\varphi(A)$ is an ideal of $B$.
\end{prop}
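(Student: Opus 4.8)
The plan is to use that, by the previous proposition, $\varphi(A)$ is already a right ideal of $B$, so only the left-ideal condition is in question; and since $B=H\triangleright\varphi(A)$, every element of $B$ is a finite sum of elements of the form $h\triangleright\varphi(a)$, whence by bilinearity of $\ast$ it suffices to control the products $(h\triangleright\varphi(a))\ast\varphi(b)$ for $h\in H$ and $a,b\in A$. The first step is to evaluate such a product at an arbitrary $k\in H$, using the definitions of the $H$-action on $\mathcal{F}$ and of the convolution product:
\[
\big((h\triangleright\varphi(a))\ast\varphi(b)\big)(k)=\varphi(a)(k_1h)\,\varphi(b)(k_2)=(k_1h\cdot a)(k_2\cdot b).
\]
The key point, extracted from Remark \ref{nonunital}, is that symmetry of the partial action is equivalent to the identity $k\cdot((h\cdot a)b)=(k_1h\cdot a)(k_2\cdot b)$ for all $h,k\in H$ and $a,b\in A$, and the right-hand side of the display is precisely $\varphi((h\cdot a)b)(k)$.

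For the forward implication, assuming the action is symmetric, I would combine the displayed computation with that reformulation of Remark \ref{nonunital} to conclude $(h\triangleright\varphi(a))\ast\varphi(b)=\varphi((h\cdot a)b)\in\varphi(A)$; summing over the generators of $B$ then shows $\varphi(A)$ is a left ideal, hence a two-sided ideal of $B$.

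For the converse, assuming $\varphi(A)$ is an ideal of $B$, I would recover values from functions using that $\varphi$ is injective and $\varphi(c)(1_H)=1_H\cdot c=c$. Evaluating $(h\triangleright\varphi(a))\ast\varphi(b)$ at $1_H$ gives $(1_1h\cdot a)(1_2\cdot b)$, which equals $(h\cdot a)b$ by Lemma \ref{lemaadicional}; hence $(h\triangleright\varphi(a))\ast\varphi(b)=\varphi((h\cdot a)b)$. Evaluating this equality at an arbitrary $k$ yields $(k_1h\cdot a)(k_2\cdot b)=k\cdot((h\cdot a)b)$, and specializing $b=1_A$ gives $k\cdot(h\cdot a)=(k_1h\cdot a)(k_2\cdot 1_A)$, which is exactly condition (iv) of Definition \ref{acaoparcial}; thus the action is symmetric.

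The step I expect to require the most care is the reduction itself: checking that the left-ideal property need only be verified on the spanning elements $h\triangleright\varphi(a)$ of $B$, and, in the converse, being careful that the equality $(h\triangleright\varphi(a))\ast\varphi(b)=\varphi((h\cdot a)b)$ really does follow from membership in $\varphi(A)$ together with injectivity of $\varphi$ and the normalization $\varphi(c)(1_H)=c$. Once these bookkeeping points are settled, both implications are short computations with the convolution product and the lemmas already established.
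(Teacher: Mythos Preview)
Your proposal is correct. The forward implication matches the paper's argument exactly: both compute $((h\triangleright\varphi(a))\ast\varphi(b))(k)=(k_1h\cdot a)(k_2\cdot b)$ and use the symmetric form of Remark \ref{nonunital} to identify this with $\varphi((h\cdot a)b)(k)$.

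For the converse, your route differs from the paper's. The paper observes that if $\varphi(A)$ is a two-sided ideal of $B$ with unit $\varphi(1_A)$, then $\varphi(1_A)$ is automatically central in $B$; it then simply swaps the factors in $\varphi(1_A)\ast(h\triangleright\varphi(a))=(h\triangleright\varphi(a))\ast\varphi(1_A)$ and evaluates at $k$ to obtain $(k_1h\cdot a)(k_2\cdot 1_A)=k\cdot(h\cdot a)$ directly. You instead use the ideal hypothesis to assert $(h\triangleright\varphi(a))\ast\varphi(b)\in\varphi(A)$, recover the preimage by evaluating at $1_H$ (with Lemma \ref{lemaadicional} to simplify $(1_1h\cdot a)(1_2\cdot b)$ to $(h\cdot a)b$), and then evaluate at a general $k$. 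Both arguments are short; the paper's is slightly slicker because the centrality observation bypasses the need to identify the preimage, while yours has the mild advantage of producing the full identity $k\cdot((h\cdot a)b)=(k_1h\cdot a)(k_2\cdot b)$ along the way rather than only the $b=1_A$ case.
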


\begin{proof}
Suppose that a partial action of $H$ on $A$ is symmetric. Then,
$$
	\begin{array}{rcl}
		((h \triangleright \varphi(a)) \ast \varphi(b))(k) & = & (h \triangleright \varphi(a))(k_1)  \varphi(b)(k_2) \\
		& = & \varphi(a)(k_1 h)  \varphi(b)(k_2) \\
		& = & (k_1 h \cdot a) (k_2 \cdot b) \\
		& \overset{\textrm{sym}}{=} & k \cdot ((h \cdot a) b) \\
		& = & \varphi((h \cdot a) b)(k) \\
	\end{array}
$$
for all $h,k \in H$ and $a,b \in A$.  As $\varphi(A)$ is a right ideal of $B$, the required follows.

\vu

Conversely, since $\varphi(A)$ is an ideal of $B$ we have that $\varphi(1_A)$ is central in $B$. Then,

$$
	\begin{array}{rcl}
		k \cdot (h \cdot a) &= & \varphi(h \cdot a)(k) \\
		&= & (h \cdot \varphi(a))(k) \\
		&= & (\varphi(1_A) \ast (h \triangleright \varphi(a)))(k) \\
		&= & ((h \triangleright \varphi(a)) \ast \varphi(1_A))(k) \\
		&= & (h \triangleright \varphi(a))(k_1) \varphi(1_A)(k_2) \\
		&= & \varphi(a)(k_1 h) \varphi(1_A)(k_2) \\
		&= & (k_1 h \cdot a) (k_2 \cdot 1_A) \\
	\end{array}
$$
for all $h,k \in H$ and $a \in A$.
\end{proof}

By a homomorphism between two globalizations of a same partial $H$-module algebra we mean a multiplicative linear map
that commutes with the respective actions. If such a homomorphism is bijective we say that such globalizations are \emph{equivalent}.

\begin{prop}\label{epiforstand}
With the above notations, any globalization of $A$ is a homomorphic preimage of the standard one.
\end{prop}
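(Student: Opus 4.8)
The plan is to show that, given an arbitrary globalization $(B,\theta)$ of the partial $H$-module algebra $A$, there is a surjective homomorphism of globalizations $\Phi\colon B\to \widetilde B$, where $(\widetilde B,\varphi)$ denotes the standard globalization $\widetilde B=H\triangleright\varphi(A)\subseteq\mathcal F=\mathrm{Hom}(H,A)$. The natural candidate is the map that, to an element of $B$, associates the ``function on $H$'' obtained by translating it back into $A$ via $\theta$ and the action $\triangleright$. Concretely, since $B=H\triangleright\theta(A)$, a generic element of $B$ is a sum of terms $h\triangleright\theta(a)$; I would first define $\Phi$ on such generators by
$$
\Phi\bigl(h\triangleright\theta(a)\bigr)=h\triangleright\varphi(a)\in\widetilde B,
$$
and then argue that $\Phi$ extends to a well-defined linear map on all of $B$. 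Equivalently, one can describe $\Phi$ intrinsically: for $b\in B$ set $\Phi(b)$ to be the element of $\mathcal F$ given by $k\mapsto \theta^{-1}\bigl(\theta(1_A)(k\triangleright b)\bigr)$ (using that $\theta(A)$ is a right ideal of $B$, so $\theta(1_A)(k\triangleright b)\in\theta(A)$, and that $\theta$ is injective onto its image). Checking that these two descriptions agree on generators is the computation $k\mapsto\theta^{-1}\bigl(\theta(1_A)(k\triangleright(h\triangleright\theta(a)))\bigr)=\theta^{-1}\bigl(\theta(1_A)(kh\triangleright\theta(a))\bigr)=\theta^{-1}(\theta(kh\cdot a))=\varphi(a)(kh)=(h\triangleright\varphi(a))(k)$, where the second equality is property (ii) of a globalization.

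The intrinsic description makes well-definedness and linearity immediate, so the remaining steps are: (a) $\Phi$ is multiplicative; (b) $\Phi$ commutes with the $H$-actions; (c) $\Phi$ is surjective; (d) $\Phi\circ\theta=\varphi$, so that $\Phi$ is genuinely a homomorphism of globalizations. For (b), $\Phi(g\triangleright b)$ sends $k$ to $\theta^{-1}(\theta(1_A)(kg\triangleright b))$, which is exactly $\Phi(b)(kg)=(g\triangleright\Phi(b))(k)$ by the definition of the action on $\mathcal F$; so $\Phi$ is $H$-linear. For (c), surjectivity is built in: $\widetilde B=H\triangleright\varphi(A)$ is spanned by the $h\triangleright\varphi(a)=\Phi(h\triangleright\theta(a))$, which lie in the image of $\Phi$ since $B=H\triangleright\theta(A)$. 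For (d), $\Phi(\theta(a))$ sends $k$ to $\theta^{-1}(\theta(1_A)(k\triangleright\theta(a)))=\theta^{-1}(\theta(k\cdot a))=k\cdot a=\varphi(a)(k)$, again by property (ii).

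The main obstacle is step (a), multiplicativity, together with the companion fact that $\Phi$ is really well-defined on $B$ (the generator description could a priori depend on the presentation of an element as $\sum h_i\triangleright\theta(a_i)$ — the intrinsic description circumvents this, but one still needs that the intrinsic $\Phi$ lands in $\widetilde B$ and not merely in $\mathcal F$, which follows because $\Phi$ agrees with the generator formula). For multiplicativity it suffices, by linearity and the fact that $B$ is generated as a vector space by products of generators (indeed $B=H\triangleright\theta(A)$ is already closed under multiplication by property (iii), using Lemma \ref{2.25} as in the construction of the standard globalization), to verify $\Phi\bigl((h\triangleright\theta(a))(k\triangleright\theta(b))\bigr)=\Phi(h\triangleright\theta(a))\ast\Phi(k\triangleright\theta(b))$ on generators. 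Expanding the left side with Lemma \ref{2.25}(i), $(h\triangleright\theta(a))(k\triangleright\theta(b))=h_1\triangleright\bigl(\theta(a)(S(h_2)k\triangleright\theta(b))\bigr)=h_1\triangleright\theta\bigl(a\,(S(h_2)k\cdot b)\bigr)$, whose image under $\Phi$ is $h_1\triangleright\varphi\bigl(a\,(S(h_2)k\cdot b)\bigr)$; on the other side, $\varphi(a)\ast(S(h_2)k\triangleright\varphi(b))=\varphi\bigl(a\,(S(h_2)k\cdot b)\bigr)$ (this is the right-ideal computation from the construction of $\widetilde B$), and reassembling with Lemma \ref{2.25}(i) in $\widetilde B$ gives exactly $h\triangleright\varphi(a)\ast k\triangleright\varphi(b)=\Phi(h\triangleright\theta(a))\ast\Phi(k\triangleright\theta(b))$. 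I expect this to be the one place where a little care with the Sweedler indices and the placement of $\theta$ is needed; everything else is formal.
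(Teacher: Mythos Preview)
Your proof is correct and follows essentially the same route as the paper: define $\Phi$ on generators by $h\triangleright\theta(a)\mapsto h\triangleright\varphi(a)$, verify well-definedness via the computation $\theta(1_A)(k\triangleright b)\in\theta(A)$ together with injectivity of $\theta$, and establish multiplicativity by rewriting $(h\triangleright\theta(a))(k\triangleright\theta(b))$ as $h_1\triangleright\theta(a(S(h_2)k\cdot b))$ via Lemma~\ref{2.25}(i) and running the parallel computation in $\widetilde B$. Your intrinsic description $\Phi(b)(k)=\theta^{-1}\bigl(\theta(1_A)(k\triangleright b)\bigr)$ is a tidy repackaging of the well-definedness argument that the paper does by hand (showing $\sum h_i\triangleright\theta(a_i)=0$ forces $\sum h_i\triangleright\varphi(a_i)=0$), but the underlying computation is identical.
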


\begin{proof}
Let $(B', \theta)$ be a globalization of the partial $H$-module algebra $A$ and define the following map
$$
\begin{array}{rccc}
	\Phi\colon &B' & \to & B\\
	&\sum\limits_{i=0}^{n} h_i \triangleright \theta(a) & \mapsto & \sum\limits_{i=0}^{n} h_i \triangleright \varphi(a)
\end{array}
$$
In order to prove that $\Phi$ is well defined, it is enough to check that
if $\sum\limits_{i=0}^{n} h_i \triangleright \theta(a) = 0$ then $\sum\limits_{i=0}^{n} h_i \triangleright \varphi(a) = 0$.
Assume that $\sum\limits_{i=0}^{n} h_i \triangleright \theta(a) = 0$. Then, for all $k\in H$ we have
$$
\begin{array}{rcl}
	0 &=& \theta(1_A) (k \triangleright \sum\limits_{i=0}^{n} h_i \triangleright \theta(a)) \\
	&=& \theta(1_A) (\sum\limits_{i=0}^{n} k h_i \triangleright \theta(a)) \\
	&=& \sum\limits_{i=0}^{n} k h_i \cdot \theta(a) \\
	&=& \theta\left(\sum\limits_{i=0}^{n} k h_i \cdot a\right) \\
\end{array}
$$
and, as $\theta$ is injective, we get $\sum\limits_{i=0}^{n} k h_i \cdot a = 0$.

Hence, for any $k\in H$,

$$
\begin{array}{rcl}
	\left(\sum\limits_{i=0}^{n} h_i \triangleright \varphi(a)\right) (k) &=& \sum\limits_{i=0}^{n} \varphi(a) (k h_i)\\
	&=& \sum\limits_{i=0}^{n} k h_i \cdot a\\
	& =& 0.
\end{array}
$$

Clearly, $\Phi$ is surjective and $\Phi(g\triangleright b')=g\triangleright\Phi(b')$, for all $b'\in B'$ and $g\in H$.

\vu

Finally, for all $h,k\in H$ and $a,b\in A$,

$$
\begin{array}{rcl}
	\Phi((h\triangleright \theta(a))(k\triangleright \theta(b))) & = & \Phi(1_H \triangleright ((h\triangleright \theta(a))(k\triangleright \theta(b))))\\
	& = & \Phi((1_1 \triangleright h\triangleright \theta(a))(1_2 \triangleright k\triangleright \theta(b)))\\
	& = & \Phi((1_1 h\triangleright \theta(a))(1_2 k\triangleright \theta(b)))\\
	& = & \Phi((h_1 \triangleright \theta(a))(h_2 S(h_3) k\triangleright \theta(b)))\\
	& = & \Phi(h_1 \triangleright (\theta(a)(S(h_2) k\triangleright \theta(b))))\\
	& = & \Phi(h_1 \triangleright \theta(a(S(h_2) k\cdot b)))\\
	& = & h_1 \triangleright (\varphi(a(S(h_2) k\cdot b)))\\
	& = & h_1 \triangleright (\varphi(a)\ast(S(h_2) k\triangleright \varphi(b)))\\
	& = & (h_1 \triangleright \varphi(a))\ast(h_2 S(h_3) k\triangleright \varphi(b))\\
	& = & (1_1 h \triangleright \varphi(a))\ast(1_2 k\triangleright \varphi(b))\\
	& = & (1_1 \triangleright h \triangleright \varphi(a))\ast(1_2 \triangleright k\triangleright \varphi(b))\\
	& = & 1_H \triangleright ((h \triangleright \varphi(a))\ast(k\triangleright \varphi(b)))\\
	& = & \Phi(h \triangleright \theta(a))\ast\Phi(k\triangleright \theta(b))
\end{array}
$$
\end{proof}


\begin{dfn}
Let $(B,\theta)$ be a globalization of a partial $H$-module algebra $A$. We say that $B$ is \emph{minimal} if for
every $H$-submodule $M$ of $B$ such that $\theta( 1_{A})M = 0$ we have $M = 0$.
\end{dfn}

\begin{prop}
	The standard globalization $(B, \varphi)$ of $A$ is minimal.
\end{prop}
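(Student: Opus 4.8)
The plan is to take an $H$-submodule $M$ of the standard globalization $B = H \triangleright \varphi(A)$ with $\varphi(1_A) \ast M = 0$ and show $M = 0$. Let $f \in M$. Since $M$ is an $H$-submodule of $\mathcal{F} = \mathrm{Hom}(H,A)$ with the action $(h \triangleright f)(k) = f(kh)$, the hypothesis $\varphi(1_A) \ast M = 0$ means $\varphi(1_A) \ast (h \triangleright f) = 0$ for all $h \in H$, i.e.\ evaluating at $k \in H$,
\[
0 = (\varphi(1_A) \ast (h \triangleright f))(k) = (k_1 \cdot 1_A)\,f(k_2 h),
\]
for all $h, k \in H$. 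The goal is to exploit this, together with the fact that $f$ itself lies in $B$, hence is a sum $f = \sum_i h_i \triangleright \varphi(a_i)$, to force $f = 0$.

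First I would use that $f \in B$ to write $f(k) = \sum_i (h_i \triangleright \varphi(a_i))(k) = \sum_i \varphi(a_i)(k h_i) = \sum_i (k h_i \cdot a_i)$, which in particular shows $f(k) = f(k_1)(1_2 \cdot 1_A)$-type identities hold; more to the point, applying Lemma~\ref{lemaadicional} and Lemma~\ref{noHRcola} one gets $f(k) = (1_1 k \cdot a)(\cdots)$ allowing the unit $1_A$ to be extracted. The cleanest route: evaluate the vanishing identity $(k_1 \cdot 1_A) f(k_2 h) = 0$ at $h = 1_H$ and sum over the coproduct of $1_H$, obtaining $(1_1 \cdot 1_A) f(1_2 k) = 0$ for all $k$. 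On the other hand, since $f \in B$ we have $f(k) = \sum_i (k h_i \cdot a_i)$, and by Lemma~\ref{lemaadicional} each term satisfies $(k h_i \cdot a_i) = (1_1 k h_i \cdot a_i)(1_2 \cdot 1_A)$; combined with Lemma~\ref{saiprafora} (or directly with the computation $a 1_g = \cdots$ style manipulations using $\Delta(1_H) \in H_R \otimes H_L$) this yields $f(k) = (1_1 \cdot 1_A)\,f(1_2 k)$ — i.e.\ $f$ is ``absorbed'' by the idempotent $\varphi(1_A)$ on the left. Substituting the previous vanishing gives $f(k) = 0$ for all $k$, hence $f = 0$.

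The key algebraic input is the identity $f(k) = (1_1 \cdot 1_A)\, f(1_2 k)$ valid for every $f \in B$, which says precisely that $\varphi(1_A) \ast g = g$ for all $g \in B$ — equivalently, $\varphi(1_A)$ is a left unit for $B$. This follows because $B = H \triangleright \varphi(A)$ and $\varphi(1_A) \ast (h \triangleright \varphi(a)) = \varphi(h \cdot a) = h \triangleright \varphi(a)$ was already established in the proof that $(B,\varphi)$ is a globalization; one needs that $\varphi(1_A) \ast (h \triangleright \varphi(a)) = h \triangleright \varphi(a)$, which is exactly the computation $(\varphi(1_A) \ast (h \triangleright \varphi(a)))(k) = (k_1 \cdot 1_A)(k_2 h \cdot a) = k \cdot h \cdot a = \varphi(h \cdot a)(k) = (h \triangleright \varphi(a))(k)$ shown earlier. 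So once we know $\varphi(1_A) \ast g = g$ for $g \in B$, the hypothesis $\varphi(1_A) \ast M = 0$ collapses to $M = 0$ immediately: for $f \in M$, $f = \varphi(1_A) \ast f \in \varphi(1_A) \ast M = 0$.

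I expect the only subtlety to be confirming that $\varphi(1_A)$ acts as a two-sided identity (or at least a left identity) on all of $B$, not merely on the generators $h \triangleright \varphi(a)$ — this requires checking compatibility of $\ast$-multiplication by $\varphi(1_A)$ with $\Bbbk$-linear combinations and with the module action, which is routine from the displayed identity $\varphi(1_A)\ast(h\triangleright\varphi(a)) = h\triangleright\varphi(a)$ by linearity. Thus the main obstacle is essentially bookkeeping rather than a genuine difficulty; the heart of the argument is the observation that $\varphi(1_A)$ is a left identity of $B$, which is already contained in the earlier computations.
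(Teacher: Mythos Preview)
Your ``cleanest route'' contains a genuine error. You claim that $\varphi(1_A)$ is a left identity for $B$, i.e.\ $\varphi(1_A)\ast g = g$ for all $g\in B$, and justify this via $\varphi(1_A)\ast(h\triangleright\varphi(a)) = h\triangleright\varphi(a)$. But this last equality is false: what the globalization proof establishes is $\varphi(1_A)\ast(h\triangleright\varphi(a)) = \varphi(h\cdot a)$, and evaluating both sides of your claimed identity at $k$ would give $k\cdot(h\cdot a)=kh\cdot a$, which is exactly the condition that the action be \emph{global}. So your final paragraph, ``for $f\in M$, $f=\varphi(1_A)\ast f=0$'', does not go through.

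What does work is the weaker pointwise identity you wrote earlier, $f(k)=(1_1\cdot 1_A)\,f(1_2 k)$ for $f\in B$: this follows from Lemma~\ref{lemaadicional} applied to $f=\sum_i h_i\triangleright\varphi(a_i)$, since $(1_1\cdot 1_A)(1_2 kh_i\cdot a_i)=(1_H\cdot 1_A)(kh_i\cdot a_i)=kh_i\cdot a_i$. This is \emph{not} the statement that $\varphi(1_A)\ast f=f$ (which would be $(k_1\cdot 1_A)f(k_2)=f(k)$). To use it, you must evaluate the hypothesis $\varphi(1_A)\ast(h\triangleright f)=0$ at $k=1_H$ (not $h=1_H$ as you wrote), obtaining $(1_1\cdot 1_A)f(1_2 h)=0$; then the pointwise identity gives $f(h)=0$. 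The paper organizes the same computation slightly differently: writing $m=\sum_i h_i\triangleright\varphi(a_i)$, it observes $\varphi(1_A)\ast(k\triangleright m)=\varphi\bigl(\sum_i kh_i\cdot a_i\bigr)$, invokes injectivity of $\varphi$ to get $\sum_i kh_i\cdot a_i=0$, and recognizes this as $m(k)$.
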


\begin{proof}
	It is enough to prove that the minimal condition holds for any cyclic submodule of $B$. Let $m = \sum\limits_{i=0}^n h_i\triangleright\varphi(a_i)$
be an element in $B$. Suppose that $\varphi( 1_{A})\ast\langle m\rangle = 0$, where $\langle m\rangle$ is the $H$-submodule of
$B$ generated by $m$, that is, $\langle m\rangle = H\triangleright m$.

\vu
	
	Then, for all $k\in H$,
	
	$$
		\begin{array}{rcl}
			0 &=& \varphi( 1_{A})\ast(k\triangleright m)\\
			&=& \varphi( 1_{A})\ast\left(k\triangleright\sum\limits_{i=0}^n h_i\triangleright\varphi(a_i)\right)\\
			&=& \varphi( 1_{A})\ast\left(\sum_i k h_i\triangleright\varphi(a_i)\right)\\
			&=& (\sum_i k h_i\cdot\varphi(a_i))\\
			&=& \varphi(\sum_i k h_i\cdot a_i)
		\end{array}
	$$
	
	which implies $\sum\limits_i k h_i\cdot a_i = 0$, since $\varphi$ is a monomorphism.
	
	Since $m\in B\subseteq Hom(H, A)$ we have	
	$$
		\begin{array}{rcl}
			m(k) &=& (\sum\limits_{i=0}^n h_i\triangleright\varphi(a_i))(k)\\
			&=& (\sum_i \varphi(a_i))(k h_i)\\
			&=& \sum_i k h_i\cdot a_i\\
			&=& 0
		\end{array}
	$$
for all $k\in H$. Therefore, $m=0$.
\end{proof}

\begin{prop}
\label{minisostd}
Any two minimal globalizations of a partial $H$-module algebra $A$ are equivalent.
\end{prop}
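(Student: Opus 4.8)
The plan is to combine the two previous propositions. Proposition~\ref{epiforstand} tells us that any globalization $(B',\theta)$ of $A$ is a homomorphic preimage of the standard one $(B,\varphi)$, i.e.\ there is a surjective homomorphism of globalizations $\Phi\colon B'\to B$. So the strategy is: take two minimal globalizations $(B_1,\theta_1)$ and $(B_2,\theta_2)$, apply Proposition~\ref{epiforstand} to each to get surjective homomorphisms $\Phi_1\colon B_1\to B$ and $\Phi_2\colon B_2\to B$ onto the standard globalization, and then show that each $\Phi_i$ is injective (hence bijective). Once $\Phi_1$ and $\Phi_2$ are isomorphisms of globalizations, the composite $\Phi_2^{-1}\circ\Phi_1\colon B_1\to B_2$ is the desired equivalence.

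The key step, then, is: \emph{a surjective homomorphism from a minimal globalization onto the standard globalization is injective}. To see this, let $\Phi\colon B'\to B$ be such a map with $(B',\theta)$ minimal, and consider $M=\ker\Phi$. First I would check that $M$ is an $H$-submodule of $B'$: it is a $\Bbbk$-subspace, and since $\Phi(g\triangleright b')=g\triangleright\Phi(b')$ we get $g\triangleright M\subseteq M$. The point is then to show $\theta(1_A)\ast M=0$ (using the notation $\ast$ loosely for the multiplication of $B'$, i.e.\ $\theta(1_A)m$), so that minimality of $B'$ forces $M=0$. For $m\in M$ and any $b'\in B'$, using that $\Phi$ is multiplicative and that $\theta(1_A)$ multiplies an element of $B'$ in the same way $\varphi(1_A)$ does under $\Phi$ — more precisely, for $m\in M$ we have $\Phi(\theta(1_A)\, m) = \varphi(1_A)\ast\Phi(m)=\varphi(1_A)\ast 0 = 0$, so $\theta(1_A)m\in M$ again; this alone is not quite enough, so the real computation is to show $\theta(1_A)m$ is actually $0$. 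Here I would use that $\theta(A)$ generates $B'$, writing $m$ in the form $\sum_i h_i\triangleright\theta(a_i)$, and compute $\theta(1_A)(k\triangleright m)$ for arbitrary $k\in H$: by condition (ii) of the definition of globalization this equals $\theta(\sum_i kh_i\cdot a_i)$, and applying $\Phi$ and using $\Phi(m)=0$ together with the fact that in $B\subseteq\mathrm{Hom}(H,A)$ one has $m'(k)=\sum_i kh_i\cdot a_i$ for $m'=\sum_i h_i\triangleright\varphi(a_i)=\Phi(m)$, we get $\sum_i kh_i\cdot a_i=0$ for all $k$, whence $\theta(1_A)(k\triangleright m)=0$ for all $k$; taking $k=1_H$ gives $\theta(1_A)m=0$. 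Thus $\theta(1_A)\ast\langle m\rangle=0$ where $\langle m\rangle=H\triangleright m$, and minimality yields $m=0$.

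The main obstacle I anticipate is the bookkeeping in that last computation: one must be careful that the representation $m=\sum_i h_i\triangleright\theta(a_i)$ is not canonical, so every identity has to be phrased in a way that is manifestly independent of the chosen representation — which is exactly why passing through $\theta(1_A)(k\triangleright m)=\theta(\sum_i kh_i\cdot a_i)$ and the injectivity of $\theta$ is the clean route, mirroring the argument already used in the proof that the standard globalization is minimal and in the well-definedness part of Proposition~\ref{epiforstand}. Once injectivity of each $\Phi_i$ is established, the conclusion that $\Phi_2^{-1}\circ\Phi_1$ is an equivalence of globalizations is immediate, since a composite of multiplicative, action-commuting bijections is again one.
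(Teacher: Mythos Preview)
Your proposal is correct and follows essentially the same route as the paper. The paper's proof simply shows that for any minimal globalization $(B',\theta)$ the map $\Phi\colon B'\to B$ of Proposition~\ref{epiforstand} is injective, via exactly the computation you describe: from $\Phi(\sum_i h_i\triangleright\theta(a_i))=0$ one gets $\sum_i kh_i\cdot a_i=0$ for all $k$, hence $\theta(1_A)(k\triangleright m)=0$ for all $k$, and minimality forces $m=0$. Your version differs only cosmetically in that you apply this to two minimal globalizations and compose the resulting isomorphisms, and in that you first test whether $\ker\Phi$ being an $H$-submodule suffices before arriving at the element-level argument; the paper goes straight to the latter.
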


\begin{proof}
Let $(B',\theta)$ be a minimal globalization of $A$, $(B,\varphi)$ the standard one, and $\Phi\colon B'\to B$ as defined
in Proposition \ref{epiforstand}. It is enough to prove that $\Phi$ is injective.

\vu

Suppose $\Phi(\sum_i h_i \triangleright \theta(a_i)) = 0$. Thus, $0 = (\sum_i{h_i \triangleright \varphi(a_i)}) (g) = \sum_i{g h_i \cdot a_i}$,
for all $g \in H$, and so $0 = \theta(\sum_i{g h_i \cdot a_i}) = \sum_i{g h_i \cdot \theta(a_i)} = \theta( 1_{A})(\sum_i{g h_i \triangleright \theta(a_i)}) =
\theta( 1_{A})(g \triangleright \sum_i{h_i \triangleright \theta(a_i)})$.

Now, if $M$ denotes the $H$-submodule of $B'$ generated by
$\sum_i{h_i \triangleright \theta(a_i)}$ we have that $\theta( 1_{A})M = 0$, hence $M = 0$. Therefore, $\Phi$ is injective.
\end{proof}

We end this section summarizing all the above main results in the following theorem.

\begin{teo} Let $A$ be a partial $H$-module algebra.
\begin {itemize}
\item[(i)] $A$ has a minimal globalization.

\vu

\item[(ii)] Any two minimal globalization of $A$ are equivalent.

\vu

\item[(iii)] Any globalization of $A$ is a homomorphic preimage of a minimal one.
\end{itemize}\qed
\end{teo}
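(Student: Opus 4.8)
The plan is to derive each of the three assertions directly from the propositions already established in this section, so that the theorem becomes essentially a compilation. The standard globalization $(B,\varphi)$ constructed above provides the existence of \emph{some} globalization, and one of the subsequent propositions asserts that this $(B,\varphi)$ is minimal; combining these two facts gives part (i). For part (ii) I would invoke the proposition stating that any two minimal globalizations of $A$ are equivalent — this is exactly the content of Proposition \ref{minisostd}, so nothing further is required. For part (iii), the proposition asserting that any globalization of $A$ is a homomorphic preimage of the standard one, together with the fact that the standard globalization is itself minimal, yields that any globalization is a homomorphic preimage of a minimal one.

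More precisely, I would write: (i) follows by taking $(B,\varphi)$ to be the standard globalization of $A$, whose existence is guaranteed by the construction preceding Proposition \ref{idealstd}, and which is minimal by the proposition establishing minimality of the standard globalization. (ii) is precisely Proposition \ref{minisostd}. (iii) follows from Proposition \ref{epiforstand}, which says that any globalization $(B',\theta)$ is a homomorphic preimage of the standard globalization $(B,\varphi)$ via the surjection $\Phi\colon B'\to B$; since $(B,\varphi)$ is minimal, $(B',\theta)$ is a homomorphic preimage of a minimal globalization, as claimed.

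There is essentially no obstacle here: the theorem is a summary, and every ingredient has already been proven. The only minor care needed is to make explicit in part (iii) that the ``minimal one'' can be taken to be the \emph{same} standard globalization for every $(B',\theta)$, which is immediate from the statements of Propositions \ref{epiforstand} and the minimality of $(B,\varphi)$. No new computation is involved, so the proof reduces to three one-line citations of the preceding results, and one may legitimately mark it with \qed as the excerpt already does.
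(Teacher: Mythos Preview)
Your proposal is correct and matches the paper's approach exactly: the theorem is stated with a bare \qed\ because it is a summary of the preceding propositions, and you have identified precisely the three citations (existence and minimality of the standard globalization, Proposition~\ref{minisostd}, and Proposition~\ref{epiforstand}) that justify parts (i)--(iii).
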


%
%

\section{Partial Smash Product}\label{smash}

In this section we construct the smash product for a partial $H$-module algebra.

\vu

The smash product already exists
for an $H$-module algebra and even for a partial $H$-module algebra when $H$ is a Hopf algebra.
So, it is natural to ask if it still works for a partial $H$-module algebra when $H$ is a weak Hopf algebra.
The hard task here is to get the good definition of smash product. In fact, the smash product for
Hopf algebra actions (partial or global) is, by construction, a tensor product over the ground field, which makes
easy to show that it is well defined. This does not occur when dealing with weak Hopf algebra actions because
the tensor product, in this case, is not anymore over the ground field but over the algebra $H_L$.

\vu

In order to get our aim we need first a right $H_L$-module structure for a partial $H$-module algebra.

\begin{prop}\label{hhlmoddir}
	Let $A$ be a partial $H$-module algebra. Then, $A$ is a right $H_L$-module via $a\triangleleft z= S_R^{-1}(z)\cdot a = a( S_R^{-1}(z)\cdot 1_{A}),$
for all $a\in A$ and $z\in H_L$.
\end{prop}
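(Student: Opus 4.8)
The plan is to verify the two module-algebra axioms for a right action: the unit axiom $a \triangleleft 1_{H} = a$ and the associativity axiom $(a \triangleleft z) \triangleleft w = a \triangleleft (zw)$ for $z, w \in H_{L}$, where the candidate action is $a \triangleleft z = S_{R}^{-1}(z) \cdot a$. Before that, I would first confirm the stated alternative formula $S_{R}^{-1}(z) \cdot a = a\,(S_{R}^{-1}(z) \cdot 1_{A})$: since $S_{R}^{-1}(z) \in H_{R}$ by Lemma~\ref{p10}, this is exactly Lemma~\ref{saiprafora}(ii) applied with the element $S_{R}^{-1}(z) \in H_{R}$ and $b = 1_{A}$, giving $a(S_{R}^{-1}(z)\cdot 1_{A}) = S_{R}^{-1}(z)\cdot(a 1_{A}) = S_{R}^{-1}(z)\cdot a$. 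Having both expressions available is convenient because the first is better for checking unitality while the second (a right multiplication by an element of $A$) makes associativity more transparent.

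For the unit axiom, $S_{R}^{-1}(1_{H_{L}}) = S_{R}^{-1}(1_{H}) = 1_{H}$ (using $S(1_H)=1_H$, i.e. \eqref{p11-4}, so $S_R$ fixes $1_H$), hence $a \triangleleft 1_{H} = 1_{H}\cdot a = a$ by axiom~(i) of a partial action. For associativity, write $w' = S_{R}^{-1}(w)$ and $z' = S_{R}^{-1}(z)$, both in $H_{R}$. Then
\[
(a\triangleleft z)\triangleleft w = w' \cdot (z'\cdot a).
\]
Since $w' \in H_{R}$, Lemma~\ref{noHRcola} applies and gives $w'\cdot(z'\cdot a) = w'z'\cdot a$. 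On the other hand, $a \triangleleft (zw) = S_{R}^{-1}(zw)\cdot a$, so the whole thing reduces to the purely Hopf-theoretic identity $S_{R}^{-1}(zw) = S_{R}^{-1}(w)\,S_{R}^{-1}(z)$ in $H_{R}$. This follows because $S$ is anti-multiplicative (\eqref{p11-1}): $S_{R}$ (the restriction of $S$ to $H_L$, landing in $H_R$ by Lemma~\ref{p10}) satisfies $S_{R}(xy) = S(y)S(x) = S_{R}(y)S_{R}(x)$ for $x,y \in H_{L}$, and inverting this anti-isomorphism of algebras reverses the product again, yielding $S_{R}^{-1}(zw) = S_{R}^{-1}(w)S_{R}^{-1}(z)$ — note we need $H_L$, hence $H_R$, to be closed under multiplication, which is Lemma~\ref{p5}.

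The main obstacle is not any single hard step but making sure the bookkeeping with $S_{R}$, $S_{R}^{-1}$ and the subalgebras $H_{L}, H_{R}$ is airtight: one must invoke Lemma~\ref{p10} to know $S_{R}^{-1}$ is defined on $H_{L}$ and lands in $H_{R}$ (so that Lemmas~\ref{noHRcola} and~\ref{saiprafora} are applicable), and Lemma~\ref{p5} to know $zw \in H_{L}$ so that $S_{R}^{-1}(zw)$ even makes sense. Everything else is a direct consequence of the partial-action axioms and the already-established technical lemmas, so the proof should be short once these identifications are in place.
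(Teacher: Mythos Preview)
Your proof is correct and follows essentially the same route as the paper: verify the unit axiom via $S_R^{-1}(1_H)=1_H$, then for associativity reduce $S_R^{-1}(w)\cdot(S_R^{-1}(z)\cdot a)$ to $S_R^{-1}(w)S_R^{-1}(z)\cdot a$ (the paper does this by expanding with \eqref{p3-2} and the partial-action axiom, you more cleanly invoke Lemma~\ref{noHRcola}) and finish with the anti-multiplicativity $S_R^{-1}(zw)=S_R^{-1}(w)S_R^{-1}(z)$. One notational slip: in the paper's conventions $S_R = S|_{H_R}\colon H_R\to H_L$ (and $S_L=S|_{H_L}$), so your parenthetical ``the restriction of $S$ to $H_L$'' is reversed---but your conclusion $S_R^{-1}\colon H_L\to H_R$ and the anti-multiplicativity identity are correct, so the argument is unaffected.
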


\begin{proof}
	As $1_H\in H_L$ and $ S_R^{-1}(1_H)=1_H$, it follows that $a \triangleleft 1_H = 1_H \cdot a = a$
	
	Let $a\in A$, $h,g\in H_L$, so
	
	$$
	\begin{array}{rcl}
		(a\triangleleft h)\triangleleft g&=&  S_R^{-1}(g)\cdot( S_R^{-1}(h)\cdot a)\\
		&=&( S_R^{-1}(g)_1\cdot 1_{A})( S_R^{-1}(g)_2 S_R^{-1}(h)\cdot a)\\
		&\overset{\eqref{p3-2}}{=}& (1_1\cdot 1_{A})(1_2 S_R^{-1}(g) S_R^{-1}(h)\cdot a)\\
		&=&1_H\cdot( S_R^{-1}(hg)\cdot a)\\
		&=&a\triangleleft hg.
	\end{array}
	$$
	
	The equality $ S_R^{-1}(z)\cdot a = a( S_R^{-1}(z)\cdot 1_{A})$ holds by \ref{saiprafora}(i).
\end{proof}

Notice that the  action, by restriction, of $H_R$ on a partial $H$-module algebra usually behaves like a global action.
The Proposition $\ref{noHRcola}$ is a good example of it. In fact, a partial $H$-module algebra
does not become an $H_R$-module algebra simply because $H_R$ is not a coalgebra. However, we can
still consider $H_R$ acting in a similar way as a global action. The next lemma shows one more property for
the action of $H_R$ on a partial $H$-module algebra that works like a global action.

\vu

\begin{lema}
\label{noHRsomeepsilon}
 Let $A$ be a partial $H$-module algebra.
	If $h$ belongs to $H_R$, then $\varepsilon_{{}_{L}}(h)\cdot 1_{A}=h\cdot 1_{A}$.
\end{lema}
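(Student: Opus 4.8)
The plan is to reduce the statement to the assertion $S(h)\cdot 1_A = h\cdot 1_A$ for $h\in H_R$, using that on $H_R$ the maps $\varepsilon_L$ and $S$ coincide. I would first observe that $H_R=\varepsilon_R(H)$ together with the idempotency \eqref{p1-2} forces $\varepsilon_R(h)=h$ for every $h\in H_R$, so by \eqref{p9-1}, $\varepsilon_L(h)=\varepsilon_L(\varepsilon_R(h))=S(\varepsilon_R(h))=S(h)$. (One can also see this by hand from $\Delta(h)=1_1\otimes h1_2$: then $\varepsilon_L(h)=h_1S(h_2)=1_1S(h1_2)=1_1S(1_2)S(h)=\varepsilon_L(1_H)S(h)=S(h)$, using \eqref{p11-1} and $\varepsilon_L(1_H)=1_H$.) In particular $\varepsilon_L(h)\cdot 1_A=S(h)\cdot 1_A$, and only $h\cdot 1_A=S(h)\cdot 1_A$ remains.

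For that, I would extract from Lemma \ref{2.25}(i) the identity $h\cdot 1_A=h_1\cdot(S(h_2)\cdot 1_A)$, valid for all $h\in H$: take $a=b=1_A$ and $k=1_H$, so the left side collapses to $(h\cdot 1_A)(1_H\cdot 1_A)=h\cdot 1_A$ and the right side to $h_1\cdot(S(h_2)\cdot 1_A)$. Then I would specialize to $h\in H_R$: by Lemma \ref{p2} write $\Delta(1_H)=\sum_i w_i\otimes z_i$ with $w_i\in H_R$ and $z_i\in H_L$, so that $\Delta(h)=1_1\otimes h1_2=\sum_i w_i\otimes hz_i$ by \eqref{p3-2}. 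Applying the identity above together with the anti-multiplicativity of $S$ (\eqref{p11-1}) gives $h\cdot 1_A=\sum_i w_i\cdot\big((S(z_i)S(h))\cdot 1_A\big)$; since each $w_i\in H_R$, Lemma \ref{noHRcola} turns each summand into $w_iS(z_i)S(h)\cdot 1_A$, and pulling the sum to the front leaves $\big(\sum_i w_iS(z_i)\big)S(h)\cdot 1_A=\varepsilon_L(1_H)S(h)\cdot 1_A=S(h)\cdot 1_A$, where $\sum_i w_iS(z_i)=1_1S(1_2)=\varepsilon_L(1_H)=1_H$ by axiom (vi) and the counit axiom. Combining with the first paragraph yields $\varepsilon_L(h)\cdot 1_A=S(h)\cdot 1_A=h\cdot 1_A$.

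I expect the main obstacle to be conceptual rather than computational: once one notices that $\varepsilon_L$ and $S$ agree on $H_R$, the task becomes the genuinely module-theoretic claim that $h$ and $S(h)$ act identically on $1_A$ for $h\in H_R$, and the trick is to find an identity that exposes the $H_R$-component $1_1$ of $\Delta(h)$ at the head of the action so that Lemma \ref{noHRcola} (the one place where the hypothesis $h\in H_R$ is really used on the module side) applies; Lemma \ref{2.25}(i) is exactly such an identity. A minor technical care is needed in invoking Lemma \ref{noHRcola} termwise along the correlated Sweedler sum $\Delta(1_H)$, which is why I would first split $\Delta(1_H)$ as a finite sum $\sum_i w_i\otimes z_i$ with $w_i\in H_R$ (legitimate by Lemma \ref{p2}) before using it.
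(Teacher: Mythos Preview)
Your argument is correct and follows essentially the same route as the paper: both proofs pivot through the expression $h_1\cdot(S(h_2)\cdot 1_A)$ and use Lemma~\ref{noHRcola} on the $H_R$-component $1_1$ of $\Delta(1_H)$ to collapse it. The only organizational differences are that you first reduce to $S(h)\cdot 1_A=h\cdot 1_A$ via the identity $\varepsilon_L|_{H_R}=S|_{H_R}$ and invoke Lemma~\ref{2.25}(i) for $h\cdot 1_A=h_1\cdot(S(h_2)\cdot 1_A)$, whereas the paper establishes the latter identity directly (its last five lines are exactly the specialization of Lemma~\ref{2.25}(i) you cite) and reaches $\varepsilon_L(h)\cdot 1_A$ from $h_1\cdot(S(h_2)\cdot 1_A)$ without isolating $S(h)$; your explicit splitting $\Delta(1_H)=\sum_i w_i\otimes z_i$ with $w_i\in H_R$ just makes rigorous what the paper does tacitly in Sweedler notation.
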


\begin{proof} Let $h\in H_R$
		$$
	\begin{array}{rcl}
		\varepsilon_{{}_{L}}(h)\cdot 1_{A}&=&h_1S(h_2)\cdot 1_{A}\\
		&\overset{\eqref{p3-2}}{=}& 1_1S(1_2h)\cdot 1_{A} \\
		&\Ref{noHRcola}&1_1\cdot (S(1_2h)\cdot 1_{A}) \\
		&\Ref{p3-2}&h_1\cdot(S(h_2)\cdot 1_{A})\\
		&=&(h_1\cdot 1_{A})(h_2S(h_3)\cdot 1_{A})\\
		&\Ref{p4-1}&(1_1h\cdot 1_{A})(1_2\cdot 1_{A})\\
		&\Ref{noHRcola}&(1_1\cdot(h\cdot 1_{A}))(1_2\cdot 1_{A})\\
		&=&1_H\cdot(h\cdot 1_{A}) 1_{A}\\
		&=&h\cdot 1_{A}.
	\end{array}
	$$
	\end{proof}

There is also another useful characterization for the right action of $H_L$ on $A$.

\begin{lema}
\label{acaodireita}
	If $z\in H_L$, then $a \triangleleft z=a(z\cdot 1_{A})$.
\end{lema}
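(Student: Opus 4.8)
The claim is that for $z\in H_L$ and $a\in A$, one has $a\triangleleft z = a(z\cdot 1_A)$, where $a\triangleleft z = S_R^{-1}(z)\cdot a$ by Proposition~\ref{hhlmoddir}. The plan is to start from the right-hand side $a(z\cdot 1_A)$ and massage it using the identity $S_R^{-1}(z)\cdot 1_A$ to reconnect with the left $H_L$-action. Since $z\in H_L$ we have $S_R^{-1}(z)\in H_R$ by Lemma~\ref{p10}, so the term $S_R^{-1}(z)\cdot 1_A$ is well-behaved: by Lemma~\ref{saiprafora}(ii), for $w = S_R^{-1}(z)\in H_R$, the element $w\cdot 1_A$ absorbs on the left, i.e.\ $a(w\cdot 1_A) = w\cdot a$.

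The key computational step is therefore to show $a(z\cdot 1_A) = a(S_R^{-1}(z)\cdot 1_A)$, i.e.\ that $z\cdot 1_A$ and $S_R^{-1}(z)\cdot 1_A$ have the same effect when multiplied by $a$ on the right — or even better, that they are literally equal. For the latter, I would note $z\cdot 1_A = \El(z)\cdot 1_A$? No — that is only automatic for global actions. Instead I would use the relation $\Delta(z) = 1_1 z\otimes 1_2$ for $z\in H_L$ (equation~\eqref{p3-1}), together with Lemma~\ref{p13}, specifically \eqref{p13-1}: $1_1 S_R^{-1}(z)\otimes 1_2 = 1_1\otimes 1_2 z$. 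Applying the partial action in the appropriate slots and using property (ii) of Definition~\ref{acaoparcial} together with $1_H\cdot a = a$, this should convert $a(z\cdot 1_A)$ into $(1_1\cdot a)(1_2 z\cdot 1_A)$ and then, via \eqref{p13-1}, into $(1_1 S_R^{-1}(z)\cdot a)(1_2\cdot 1_A)$; recombining with property (ii) backwards and $1_H\cdot(-) = -$ gives $S_R^{-1}(z)\cdot a = a\triangleleft z$, as desired. Alternatively, one can go through Lemma~\ref{lemaadicional} to insert a $\Delta(1_H)$ and then apply \eqref{p13-1} directly.

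Concretely, I expect the cleanest route to be: write $a(z\cdot 1_A) = (1_H\cdot a)(z\cdot 1_A) \overset{\eqref{lemaadicional}}{=} (1_1\cdot a)(1_2 z\cdot 1_A) \overset{\eqref{p13-1}}{=} (1_1 S_R^{-1}(z)\cdot a)(1_2\cdot 1_A)$, and then recognize the last expression. Since $1_1 S_R^{-1}(z)\otimes 1_2$ is exactly what \eqref{p13-1} rewrites, and using that $S_R^{-1}(z)\in H_R$ commutes appropriately, one applies property (ii) in reverse: $(1_1 S_R^{-1}(z)\cdot a)(1_2\cdot 1_A) = 1_H\cdot (S_R^{-1}(z)\cdot a \cdot 1_A)$? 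That is not quite property (ii) since the comultiplication slots don't match — so more care is needed. The safe version is to use Lemma~\ref{saiprafora}(ii) directly: $S_R^{-1}(z)\in H_R$ gives $a(S_R^{-1}(z)\cdot 1_A) = S_R^{-1}(z)\cdot a = a\triangleleft z$, so it suffices to prove $a(z\cdot 1_A) = a(S_R^{-1}(z)\cdot 1_A)$, and this reduces (via \eqref{lemaadicional} applied to both sides, peeling off the $\Delta(1_H)$) to the bare identity \eqref{p13-1}.

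**Main obstacle.** The delicate point is matching the comultiplication/unit-element bookkeeping: \eqref{p13-1} is an identity about $1_1\otimes 1_2$ with an inserted $S_R^{-1}(z)$, and one must route it through Lemma~\ref{lemaadicional} (which is the partial-action analogue of inserting $\Delta(1_H)$) rather than through property (ii) of the definition, because $\Delta(z)$ for $z\in H_L$ does not factor as $z_1\otimes z_2$ in a way compatible with the naive computation. Getting the $H_R$-absorption (Lemma~\ref{saiprafora}(ii)) and the unit-element identity \eqref{p13-1} to meet in the middle is the only real content; everything else is formal.
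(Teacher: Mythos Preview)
Your approach is correct but differs from the paper's. The paper proves the bare identity $S_R^{-1}(z)\cdot 1_A = z\cdot 1_A$ directly, using Lemma~\ref{noHRsomeepsilon} (proved immediately before, and placed there precisely for this purpose): since $S_R^{-1}(z)\in H_R$, that lemma gives $S_R^{-1}(z)\cdot 1_A = \varepsilon_L(S_R^{-1}(z))\cdot 1_A$; then $\varepsilon_L(S_R^{-1}(z)) = \varepsilon_L(\varepsilon_R(S_R^{-1}(z))) \overset{\eqref{p9-1}}{=} \varepsilon_L(S(S_R^{-1}(z))) = \varepsilon_L(z) = z$, and one multiplies by $a$ at the end.

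Your route via Lemma~\ref{lemaadicional} and \eqref{p13-1} works too, and the step you were hesitant about is fine: $(1_1 S_R^{-1}(z)\cdot a)(1_2\cdot 1_A) = (S_R^{-1}(z)\cdot a)(1_H\cdot 1_A)$ is exactly Lemma~\ref{lemaadicional} applied with $h=S_R^{-1}(z)$, $k=1_H$, read right to left. Your argument thus avoids Lemma~\ref{noHRsomeepsilon} entirely, trading that structural fact (elements of $H_R$ act on $1_A$ like their $\varepsilon_L$-images) for the tensor identity \eqref{p13-1}. The paper's version is slightly cleaner in that it proves the equality $S_R^{-1}(z)\cdot 1_A = z\cdot 1_A$ as elements of $A$, independent of $a$; yours establishes the lemma for each $a$ via a product manipulation.
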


\begin{proof}Let $z\in H_L$.
	$$
	\begin{array}{rclr}
		a\triangleleft z&=& a( S_R^{-1}(z)\cdot 1_{A})&\\
		&\Ref{noHRsomeepsilon}& a(\varepsilon_{{}_{L}}( S_R^{-1}(z))\cdot 1_{A})&\\
		&=& a(\varepsilon_{{}_{L}}(\varepsilon_{{}_{R}}( S_R^{-1}(z)))\cdot 1_{A})&\ \mathrm{since} \  S_R^{-1}(z)\in H_R\\
		&\overset{\eqref{p9-1}}{=}& a(\varepsilon_{{}_{L}}(S( S_R^{-1}(z)))\cdot 1_{A})&\\
		&=& a(\varepsilon_{{}_{L}}(z)\cdot 1_{A})&\\
		&=& a(z\cdot 1_{A})&\  \mathrm{since} \ z\in H_L.
	\end{array}
	$$
\end{proof}

Now we are able to define the smash product for a partial $H$-module algebra $A$.

\vu

First, notice that $H$ has a natural structure of a left $H_L$-module via its multiplication.
We start by considering the $\Bbbk$-vector space
given by the tensor product $A\otimes_{H_{_{L}}} H$, and also denoted by $A\# H$,
with the multiplication defined by $$(a\# h)(b\# g)=a(h_1\cdot b)\# h_2g.$$

\begin{teo}
This above multiplication is well defined, associative, and $1_A\#1_H$ is a left unit.
\end{teo}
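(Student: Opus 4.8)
The plan is to prove three things in sequence: (1) the multiplication $(a\# h)(b\# g)=a(h_1\cdot b)\# h_2 g$ is well defined on the balanced tensor product $A\otimes_{H_L} H$; (2) it is associative; and (3) $1_A\# 1_H$ is a left unit.

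For well-definedness, the point is that both $A$ (as a right $H_L$-module, via Proposition \ref{hhlmoddir}) and $H$ (as a left $H_L$-module, via multiplication) carry compatible $H_L$-actions, and the formula must descend to the quotient. Concretely, I would fix $z\in H_L$ and check that the expression agrees whether we move $z$ across the tensor from the first factor to the second, i.e.\ that feeding $(a\triangleleft z)\# h$ and $a\#(zh)$ into the product against a fixed $b\# g$ gives the same element, and likewise on the other side. Using Lemma \ref{acaodireita} we have $a\triangleleft z=a(z\cdot 1_A)$, so on the one hand $((a\triangleleft z)\# h)(b\# g)=a(z\cdot 1_A)(h_1\cdot b)\# h_2 g$; on the other hand $(a\#(zh))(b\# g)=a((zh)_1\cdot b)\# (zh)_2 g=a(z_1 h_1\cdot b)\# z_2 h_2 g$. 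By \eqref{p3-1}, $\Delta(z)=z1_1\otimes 1_2$, so $z_1 h_1\otimes z_2 h_2 g=z1_1 h_1\otimes 1_2 h_2 g$; applying Lemma \ref{lemaadicional} (in the form of Remark \ref{nonunital}, or directly the identity $(z\cdot 1_A)(h_1\cdot b)\#h_2 g$ manipulation) together with Lemma \ref{saiprafora}(i) reconciles the two. The analogous check on the second tensor slot uses that $H_L$-action on $H$ from the right is again multiplication and that $\Delta$ is multiplicative (axiom (iii)); here one needs $h_2 z\# g$ versus $h\#(zg)$ and the comultiplication identity together with the fact that the $h_1\cdot b$ term is unaffected.

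For associativity, I would expand $((a\# h)(b\# g))(c\# l)$ and $(a\# h)((b\# g)(c\# l))$ directly. The left side gives $a(h_1\cdot b)((h_2 g)_1\cdot c)\# (h_2 g)_2 l=a(h_1\cdot b)(h_2 g_1\cdot c)\# h_3 g_2 l$, using coassociativity to relabel. The right side gives $a(h_1\cdot(b(g_1\cdot c)))\# h_2 g_2 l$. So associativity reduces to the identity $h_1\cdot(b(g_1\cdot c))\otimes h_2 = (h_1\cdot b)(h_2 g_1\cdot c)\otimes h_3$ in $A\otimes H$, which is exactly property (ii)/(iii) of the partial action packaged as in Remark \ref{nonunital}: $h\cdot(b(k\cdot c))=(h_1\cdot b)(h_2 k\cdot c)$, applied with the extra tensor leg carried along. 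Matching the $H$-components is then just coassociativity.

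For the left unit, $(1_A\# 1_H)(b\# g)=1_A((1_H)_1\cdot b)\# (1_H)_2 g=1_A(1_1\cdot b)\# 1_2 g$, and by Lemma \ref{lemaadicional} together with Lemmas \ref{noHRcola} and \ref{p2} (exactly the reduction used at the end of the proof of Lemma \ref{lemaadicional}), $(1_1\cdot b)\# 1_2 g = (1_1\cdot b)(1_2\cdot 1_A)\# \ldots$ collapses so that $1_A(1_1\cdot b)\#1_2 g=b\# g$; more cleanly, $\Delta(1_H)\in H_R\otimes H_L$ by Lemma \ref{p2}, so $1_2\in H_L$ and we may slide it across the tensor: $1_A(1_1\cdot b)\#1_2 g=(1_1\cdot b)\triangleleft 1_2\# g$, and by Lemma \ref{acaodireita} this is $(1_1\cdot b)(1_2\cdot 1_A)\# g=(1_1\cdot b)(1_2\cdot 1_A)\# g=1_H\cdot b\# g=b\# g$ using axiom (ii) of Definition \ref{acaoparcial} backwards. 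The main obstacle is step (1): one must be careful that the $H_L$-action on $A$ used here is the \emph{right} action $a\triangleleft z=S_R^{-1}(z)\cdot a=a(z\cdot 1_A)$, not $z\cdot a$ itself, and that the bimodule balancing genuinely holds on both tensor slots; the relations \eqref{p3-1}, \eqref{p3-2}, Lemma \ref{p2}, Lemma \ref{saiprafora} and Lemma \ref{lemaadicional} are precisely what is needed to absorb the $z\cdot 1_A$ factors correctly, and getting the bookkeeping right there is the delicate part. (Note that $1_A\# 1_H$ is only claimed to be a \emph{left} unit, which is consistent with the partial, non-symmetric setting; no right-unit check is required.)
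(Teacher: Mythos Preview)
Your associativity argument matches the paper's, and your left-unit computation (sliding $1_2\in H_L$ across the tensor using the $H_L$-balancing, then applying Lemma \ref{acaodireita} to get $(1_1\cdot b)(1_2\cdot 1_A)=1_H\cdot b=b$) is a correct variant of the paper's route through $S_R^{-1}$.

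The gap is in well-definedness. The multiplication is a map $(A\otimes_{H_L}H)\times(A\otimes_{H_L}H)\to A\otimes_{H_L}H$, so the $4$-linear map $(a,h,b,g)\mapsto a(h_1\cdot b)\# h_2 g$ must be $H_L$-balanced at the pair $(a,h)$ \emph{and} at the pair $(b,g)$; the middle balancing is only over $\Bbbk$. You sketch the first, but your ``second tensor slot'' discussion (``$h_2 z\# g$ versus $h\#(zg)$'', a right $H_L$-action on $H$, and ``the $h_1\cdot b$ term is unaffected'') does not correspond to any balancing that is actually required: there is no right $H_L$-structure on $H$ in this construction. What must be verified is $\tilde\mu(a,h,b\triangleleft z,g)=\tilde\mu(a,h,b,zg)$, i.e.\ $a(h_1\cdot(b\triangleleft z))\# h_2 g = a(h_1\cdot b)\# h_2 zg$, and here the $h_1$-term is very much affected. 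The paper handles this by writing $b\triangleleft z=S_R^{-1}(z)\cdot b$, expanding $h_1\cdot(S_R^{-1}(z)\cdot b)$ via axiom (iii), and invoking \eqref{p13-1} to trade $S_R^{-1}(z)$ on one leg of $\Delta(1_H)$ for $z$ on the other. Your sketch of the $(a,h)$ balancing is also incomplete: after Lemma \ref{saiprafora}(i) you reach $a(z\cdot(h_1\cdot b))\# h_2 g$, but you cannot pass to $a(zh_1\cdot b)\# h_2 g$ directly since $z\in H_L$ (not $H_R$) and the action is not assumed symmetric, so Lemma \ref{noHRcola} does not apply as stated; the paper's argument for this balancing is the longer of the two precisely because one must detour through $S_R^{-1}(z)\in H_R$, Lemma \ref{noHRcola}, and \eqref{p13-1}.
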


\begin{proof}
The well definition:

It is enough to show that the map $\tilde{\mu}\colon A\times H\times A\times H\to A\# H$ given by $\tilde{\mu}(a,h,b,g)=a(h_1\cdot b)\# h_2g$
is $(H_L,\Bbbk,H_L)$-balanced. In fact, for all $a,b \in A$, $h,g \in H$, $z \in H_L$ and $r \in \Bbbk$ we have:
	
$$
	\begin{array}{rcl}
		\tilde{\mu}(a,h,b\triangleleft z, g)&=& a(h_1\cdot(b\triangleleft z))\#h_2g\\
		&=& a(h_1\cdot( S_R^{-1}(z)\cdot b))\#h_2g\\
		&=& a(h_1\cdot 1_{A})(h_2 S_R^{-1}(z)\cdot b)\#h_3g\\
		&=& a(h_1\cdot 1_{A})(h_21_1 S_R^{-1}(z)\cdot b)\#h_31_2g\\
		&\overset{\eqref{p13-1}}{=}& a(h_1\cdot 1_{A})(h_21_1\cdot b)\#h_31_2zg\\
		&=& a(h_1\cdot 1_{A})(h_2\cdot b)\#h_3zg\\
		&=& a(h_1\cdot 1_{A} b)\#h_2zg\\
		&=& a(h_1\cdot b)\#h_2zg\\
		&=&\tilde{\mu}(a,h,b,zg).
	\end{array}
$$
	
It is clear that $\tilde{\mu}(a,h r,b, g)=\tilde{\mu}(a,h, rb, g)$, and
	
	$$
	\begin{array}{rcl}
		\tilde{\mu}(a\triangleleft z,h,b, g)&=&(a\triangleleft z)(h_1\cdot b)\#h_2g\\
		&=&a( S_R^{-1}(z)\cdot 1_{A})(h_1\cdot b)\#h_2g\\
		&=&a(1_H\cdot( S_R^{-1}(z)\cdot 1_{A})(h_1\cdot b))\#h_2g\\
		&=&a(1_1\cdot S_R^{-1}(z)\cdot 1_{A})(1_2\cdot h_1\cdot b)\#h_2g\\
		&=&a(1_1\cdot S_R^{-1}(z)\cdot 1_{A})(1_2\cdot  1_{A})(1_3h_1\cdot b)\#h_2g\\
		&=&a(1_1\cdot S_R^{-1}(z)\cdot 1_{A})(1_2 h_1\cdot b)\#h_2g\\
		&\Ref{noHRcola}&a(1_1 S_R^{-1}(z)\cdot 1_{A})(1_2 h_1\cdot b)\#h_2g\\
		&\overset{\eqref{p13-1}}{=}&a(1_1\cdot 1_{A})(1_2zh_1\cdot b)\#h_2g\\
		&=&a(1_1\cdot 1_{A})(1_2\cdot 1_{A})(1_3zh_1\cdot b)\#h_2g\\
		&=&a(1_1\cdot 1_{A})(1_2\cdot zh_1\cdot b)\#h_2g\\
		&=&a(1_H\cdot 1_{A}(zh_1\cdot b))\#h_2g\\
		&=&a(z1_1h_1\cdot b)\#1_2h_2g\\
		&\Ref{p3-1}&a(z_1h_1\cdot b)\#z_2h_2g\\
		&=&\tilde{\mu}(a,zh,b,g).
	\end{array}
	$$

\vu

	The associativity:
	$$
	\begin{array}{rcl}
		((a\# h)(b\# g))(c\# k)&=&(a(h_1\cdot b)\# h_2g)(c\# k)\\
		&=&a(h_1\cdot b)(h_2g_1\cdot c)\# h_3g_2k\\
		&=&a(h_1\cdot b)(h_2\cdot 1_{A})(h_3g_1\cdot c)\# h_4g_2k\\
		&=&a(h_1\cdot b)(h_2\cdot(g_1\cdot c))\# h_3g_2k\\
		&=&a(h_1\cdot b(g_1\cdot c))\# h_2g_2k\\
		&=&(a\#h)(b(g_1\cdot c)\# g_2k)\\
		&=&(a\# h)((b\# g)(c\# k)).
	\end{array}
	$$

\vu

The left unit:
	$$
	\begin{array}{rcl}
		( 1_{A}\#1_H)(a\# h)&=& 1_{A}(1_1\cdot a)\#1_2h\\
		&=& (1_1\cdot a)\#1_2h\\
		&\Ref{p2}& S_R^{-1}(S(1_1)\cdot a)\#1_2h \\
		&=& a \triangleleft S(1_1)\# 1_2h \\
		&=& a \# S(1_1)1_2 h \\
		&=& a \# \varepsilon_L(1_H) h \\
		&=& a \# h.
	\end{array}
	$$
\end{proof}
It follows from the above theorem that
$$A \underline{\#} H= (A\# H)( 1_{A}\#1_H)$$ is an algebra with $1_A\#1_H$ as its unit.
This algebra is called the \emph{partial smash product} of $A$ by $H$.

\vu

The following example illustrates that, in general, $1_A\# 1_H$ is not a unit of $A\# H$.

\begin{ex}
Let $G$ be a finite groupoid which is not a group and $G_e=\{g\in G\ \mid \ d(g)=e=r(g)\}$ the isotropy group
associated to $e$, for some $e\in G_0$. It is easy to see that $\Bbbk$ is a partial $\Bbbk G$-module algebra via
$$
\begin{array}{rccl}
\cdot \colon & \Bbbk G \otimes \Bbbk & \to & \Bbbk \\
& g \otimes 1_{\Bbbk} & \mapsto & \delta_{g,G_e}
\end{array}
$$
where $\delta_{g,G_e} = 1_\Bbbk$ if $g \in G_e$ and $0$ otherwise.
In this case, $1_{\Bbbk} \# 1_{\Bbbk G}$ is not a right unit for the smash product $\Bbbk \# \Bbbk G$.
Indeed, since $G$ is not a group, there exists an element $x$ in $G\setminus G_e$.
Thus, $x \cdot 1_{\Bbbk} = 0$ and, consequently, $(1_{\Bbbk} \# x)(1_{\Bbbk} \# 1_{\Bbbk G})=0$.\qed
\end{ex}

\vu

Actually, we have the following.

\begin{prop}
Let $A$ be a partial $H$-module algebra. Then, $1_A \# 1_H$ is a unit in $A \# H$ if and only if $A$ is an $H$-module algebra.
\end{prop}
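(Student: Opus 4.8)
The plan is to prove both implications of the equivalence. For the easy direction, suppose $A$ is an $H$-module algebra; then the partial smash product coincides with the ordinary smash product, and it is classical (see Nikshych \cite{Nik}) that $1_A\#1_H$ is then a two-sided unit. Concretely, one checks directly that $(a\#h)(1_A\#1_H)=a(h_1\triangleright 1_A)\#h_2$ equals $a\#h$: using the module-algebra condition $h_1\triangleright 1_A\otimes h_2=\varepsilon_L(h_1)\triangleright 1_A\otimes h_2$ together with Lemma \ref{acaodireita} to absorb the $H_L$-element $\varepsilon_L(h_1)$ across the balanced tensor product, and then $\varepsilon_L(h_1)h_2=h$. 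This is a routine computation in $A\# H$.

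For the converse, suppose $1_A\#1_H$ is a right unit, i.e.\ $(a\#h)(1_A\#1_H)=a\#h$ for all $a\in A$, $h\in H$. Writing this out gives
$$a(h_1\cdot 1_A)\#h_2 = a\#h$$
for all $a,h$. Taking $a=1_A$ yields $(h_1\cdot 1_A)\#h_2 = 1_A\#h$ in $A\otimes_{H_L}H$. The goal is to extract from this tensor identity the pointwise relation $h\cdot 1_A=\varepsilon_L(h)\cdot 1_A$, which by Lemma \ref{condglobal} is exactly what makes the partial action global. The natural device is to apply a suitable $\Bbbk$-linear functional to the $H$-slot that is $H_L$-linear on the left, so that it descends to $A\otimes_{H_L}H$. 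Applying $\mathrm{id}_A\otimes \varepsilon$ (after checking $\varepsilon$ is left $H_L$-linear in the appropriate sense, or more safely applying $\mathrm{id}_A\otimes(\varepsilon\circ\varepsilon_R(-)\cdots)$) to both sides of $(h_1\cdot 1_A)\#h_2=1_A\#h$ and using $\varepsilon(h_2)h_1\cdot 1_A=?$ — here one must be careful, since the counit does not split the tensor over $H_L$ cleanly. The cleaner route: multiply the identity $(h_1\cdot 1_A)\#h_2=1_A\#h$ on the left by $(1_A\#S(h_0))$ type elements is awkward; instead, apply the (well-defined) linear map $A\#H\to A$, $a\#k\mapsto a(\varepsilon_L(k)\cdot 1_A)$ — one checks this is $H_L$-balanced using Lemma \ref{acaodireita} and Lemma \ref{p6} — to both sides. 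The right side gives $\varepsilon_L(h)\cdot 1_A$; the left side gives $(h_1\cdot 1_A)(\varepsilon_L(h_2)\cdot 1_A)$, which by Corollary after Lemma \ref{saiprafora} (applied with $\varepsilon_L(h_2)\in H_L$) and the partial-action axioms collapses to $h\cdot 1_A$. That yields $h\cdot 1_A=\varepsilon_L(h)\cdot 1_A$, and Lemma \ref{condglobal} finishes it.

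The main obstacle I anticipate is producing a \emph{correct} $\Bbbk$-linear functional on $A\#H=A\otimes_{H_L}H$ that is genuinely well-defined over the tensor product and that separates enough information to recover the pointwise identity $h\cdot 1_A=\varepsilon_L(h)\cdot 1_A$. The counit $\varepsilon$ alone is not $H_L$-balanced in the required sense because the left $H_L$-action on $H$ is by multiplication, not trivial; so one cannot simply apply $\mathrm{id}\otimes\varepsilon$. The map $a\#k\mapsto a(\varepsilon_L(k)\cdot 1_A)$ is the right candidate, and verifying its $(H_L,\Bbbk)$-balancedness — that $a\triangleleft z$ paired with $k$ agrees with $a$ paired with $zk$ — reduces to $(a(z\cdot 1_A))(\varepsilon_L(k)\cdot 1_A)=a(\varepsilon_L(zk)\cdot 1_A)$, which follows from Lemma \ref{saiprafora}, Lemma \ref{acaodireita}, the identity \eqref{p6-1} $\varepsilon_L(\varepsilon_L(z)k)=\varepsilon_L(z)\varepsilon_L(k)$, and Lemma \ref{p5}. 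Once that balancedness is in hand, the rest is bookkeeping with the partial-action axioms and Lemma \ref{condglobal}.
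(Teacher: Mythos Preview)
Your approach is correct and is essentially the same as the paper's: the paper also extracts the identity $h\cdot 1_A=\varepsilon_L(h)\cdot 1_A$ from $a(h_1\cdot 1_A)\# h_2=a\# h$ by applying the map $a\# k\mapsto a\triangleleft\varepsilon_L(k)=a(\varepsilon_L(k)\cdot 1_A)$ (phrased there as ``applying $\varepsilon_L$ on the second element''), and then invokes Lemma \ref{condglobal}. Your explicit verification of $H_L$-balancedness of this map is a detail the paper leaves implicit; otherwise the arguments coincide.
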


\begin{proof}
Suppose $1_A \# 1_H$ a unit in $A \# H$. Then $a\# h = a(h_1 \cdot 1_A) \# h_2$ and, applying $\El$ on the second element of
each term of this equality,
we have $$a \triangleleft \El(h) = a (h_1 \cdot 1_A) \triangleleft \El(h_2).$$ Hence,
$$
\begin{array}{rcl}
a(\El(h) \cdot 1_A) &\Ref{acaodireita}& a\triangleleft \El(h) \\
&=& a(h_1 \cdot 1_A) \triangleleft \El(h_2) \\
&=& a(h_1 \cdot 1_A)(\El(h_2) \cdot 1_A) \\
&=& a(h_1 \cdot 1_A)(h_2 S(h_3) \cdot 1_A) \\
&\overset{(9)}{=}& a(1_1 h \cdot 1_A)(1_2 \cdot 1_A) \\
&\Ref{lemaadicional}& a(h \cdot 1_A)(1_H \cdot 1_A)\\
&=& a(h \cdot 1_A)
\end{array}
$$
and, taking $a=1_A$ we have $h \cdot 1_A = \El(h) \cdot 1_A$. By Lemma \ref{condglobal}, $A$ is an $H$-module algebra.
The converse is straightforward and standard.
\end{proof}

\vd

%
%

\section{A Morita Context}\label{morita}

In the setting of partial actions of Hopf algebras with an invertible antipode there exits a Morita context relating
the partial smash product $A \underline{\#} H$ and the (global) smash product $B\# H$, where $B$ denotes
a globalization of $A$ such that the image of $A$ inside $B$ is an ideal of $B$ (see \cite{AB}). In this section we
extend this result to the setting of partial actions of weak Hopf algebras.

\vu

First, recall the definition of a Morita context.

\begin{dfn}
	Let $A$ and $B$ be unital rings. A Morita context for $A$ and $B$ is a sixtuple $(A, B, M, N, (,), [,])$ where
$M$ is a $(A,B)$-bimodule, $N$ is a $(B,A)$-bimodule, and $(, )\colon M\otimes_{B} N  \to A$ and $[, ]\colon N\otimes_{A} M  \to B$ are homomorphisms of $(A,A)$-bimodules and
$(B,B)$-bimodules, respectively,  such that

\begin{itemize}

\item[(i)]	$(m, n) m' = m [n, m']$

\vu

\item[(ii)] $[n, m] n' = n (m, n')$,
\end{itemize}
for all $m,m'\in M$ $n,n'\in N$.
\end{dfn}

We will first construct a non unitary monomorphism of algebras from $A\# H$ into $B\# H$.
For this we need the following lemma.

\begin{lema}\label{p=g}
Let $A$ be a partial $H$-module algebra and $(B,\theta)$ a globalization of $A$. Then,
$S_R^{-1}(h) \triangleright \theta(a) = S_R^{-1}(h) \cdot \theta(a)$, for all $h \in H_L$.
\end{lema}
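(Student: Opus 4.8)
The plan is to unwind the definition of the induced partial action on $\theta(A)$ and reduce the claim to the fact that, by Lemma~\ref{p10}, $S_R^{-1}(h)$ lies in $H_R$ whenever $h\in H_L$, so that the ``defect term'' $\theta(1_A)$ appearing in the induced action can be absorbed. Concretely, for $h\in H_L$ write $w=S_R^{-1}(h)\in H_R$; by the definition of globalization (item (ii) of the definition of globalization), the partial action induced by $\triangleright$ on $\theta(A)$ satisfies
$$
w\cdot\theta(a)=\theta(1_A)\,(w\triangleright\theta(a)).
$$
So the statement to prove is exactly that $w\triangleright\theta(a)=\theta(1_A)\,(w\triangleright\theta(a))$ for all $w\in H_R$, i.e.\ that $w\triangleright\theta(a)$ already lies in the right ideal $\theta(A)$ — more precisely, that left multiplication by $\theta(1_A)$ fixes it.

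The key step is therefore to show that $w\triangleright\theta(a)\in\theta(A)$ for $w\in H_R$, and that $\theta(1_A)$ acts as a left identity on $\theta(A)$. The latter is automatic since $\theta(1_A)=1_{\theta(A)}$ and $\theta(A)$ is a subalgebra. For the former, I would use Lemma~\ref{p3-2}: since $w\in H_R$ we have $\Delta(w)=1_1\otimes w1_2$. Then, using that $B$ is an $H$-module algebra and $\theta(1_A)\theta(a)=\theta(a)$,
$$
\theta(1_A)\,(w\triangleright\theta(a))=(1_1\triangleright\theta(1_A))(1_2 w\triangleright\theta(a))=1_H\triangleright\big(\theta(1_A)(w\triangleright\theta(a))\big),
$$
wait — that reshuffling needs $\Delta(w)$ in the form $1_1\otimes w1_2$ coupled with $\Delta(1_H)$; the cleanest route is: $\theta(1_A)(w\triangleright\theta(a)) = (S(1_1)_1\cdots)$ — rather than guess, I would instead argue directly via $w\cdot\theta(a)=w\cdot(1_H\cdot\theta(a))$ and expand by axiom (iii) of the partial action on $\theta(A)$, using that $w\in H_R$ together with Lemma~\ref{noHRcola}, which already tells us $w\cdot(h\cdot a)=wh\cdot a$ for $w\in H_R$. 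Taking $h=1_H$ gives $w\cdot\theta(a)=w\cdot(1_H\cdot\theta(a))=w\cdot\theta(a)$, which is vacuous, so the real content must come from comparing the two expressions $\theta(1_A)(w\triangleright\theta(a))$ and $w\triangleright\theta(a)$ using the coalgebra structure of $w$.

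The main obstacle, then, is the bookkeeping with $\Delta(w)$ for $w=S_R^{-1}(h)\in H_R$: one must check that $\theta(1_A)(w\triangleright\theta(a))=w\triangleright\theta(a)$, and I expect this to follow by writing $w\triangleright\theta(a)=(1_1 w\triangleright\theta(1_A\cdot\text{something}))\cdots$ — more safely, by invoking Lemma~\ref{noHRsomeepsilon} and Lemma~\ref{saiprafora}(ii): since $w\in H_R$, Lemma~\ref{saiprafora}(ii) gives $b(w\cdot c)=w\cdot bc$ in any partial $H$-module algebra, and in $B$ (a global $H$-module algebra, hence in particular a partial one) this yields $\theta(1_A)(w\triangleright\theta(a))=w\triangleright(\theta(1_A)\theta(a))=w\triangleright\theta(a)$. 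Combining this with the definition of the induced action gives $S_R^{-1}(h)\cdot\theta(a)=\theta(1_A)(S_R^{-1}(h)\triangleright\theta(a))=S_R^{-1}(h)\triangleright\theta(a)$, which is the assertion. I would present the proof in this last form, citing Lemma~\ref{p10} for $S_R^{-1}(H_L)=H_R$, the definition of globalization for the induced-action formula, and Lemma~\ref{saiprafora}(ii) applied inside $B$ for the absorption of $\theta(1_A)$.
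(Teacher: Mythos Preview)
Your proposal is correct and, once the exploratory detours are set aside, lands on precisely the paper's argument: write $w=S_R^{-1}(h)\in H_R$, use the globalization formula $w\cdot\theta(a)=\theta(1_A)(w\triangleright\theta(a))$, and then absorb $\theta(1_A)$ via Lemma~\ref{saiprafora}(ii) applied in $B$. In fact your citation of part~(ii) is the right one --- the paper's reference to part~(i) is a slip, since $S_R^{-1}(h)$ lies in $H_R$, not $H_L$.
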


\begin{proof}
	Note that $S_R^{-1}(h) \cdot \theta(a) = \theta(1_A)(S_R^{-1}(h) \triangleright \theta(a))$. But $S_R^{-1}(h)$ lies in $H_R$, thus, by Proposition \ref{saiprafora}(i), we have $\theta(1_A)(S_R^{-1}(h) \triangleright \theta(a)) =
S_R^{-1}(h) \triangleright(\theta(1_A) \theta(a)) = S_R^{-1}(h) \triangleright \theta(a)$.
\end{proof}

\vu

\begin{prop}\label{monofrompartialtoglobal}
	Let $(B, \theta)$ be a globalization of the partial $H$-module algebra $A$. Then, there exists a
non unitary algebra monomorphism $\Psi$ from $A\# H$ to $B\# H$.
\end{prop}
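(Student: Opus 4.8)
The natural candidate is the $\Bbbk$-linear map $\Psi\colon A\# H\to B\# H$ defined on generators by $\Psi(a\# h)=\theta(a)\# h$. First I would check that this is well defined on the tensor product over $H_L$: since $A$ and $B$ carry the right $H_L$-module structures of Proposition \ref{hhlmoddir} (for $B$, via $b\triangleleft z=S_R^{-1}(z)\triangleright b$, which agrees with the partial-action formula by Lemma \ref{p=g} applied to $\theta(A)$, and in general because $S_R^{-1}(z)\in H_R$ acts ``globally''), and $\theta$ is an algebra map, the relation $\theta(a\triangleleft z)=\theta(a)\triangleleft z$ holds; hence $\tilde\Psi\colon A\times H\to B\# H$, $(a,h)\mapsto\theta(a)\#h$, is $H_L$-balanced and descends to $A\# H$.

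\textbf{Multiplicativity.} Next I would verify $\Psi$ respects the smash product multiplication. Using $\theta(h_1\cdot a)=h_1\cdot\theta(a)=\theta(1_A)(h_1\triangleright\theta(a))$ from the globalization axiom, together with the fact that in $B\# H$ we may absorb $\theta(1_A)$: indeed
\[
\Psi\big((a\# h)(b\# g)\big)=\theta\big(a(h_1\cdot b)\big)\# h_2 g=\theta(a)\,\theta(h_1\cdot b)\# h_2 g=\theta(a)\,\theta(1_A)(h_1\triangleright\theta(b))\# h_2 g.
\]
On the other hand $\Psi(a\# h)\Psi(b\# g)=(\theta(a)\# h)(\theta(b)\# g)=\theta(a)(h_1\triangleright\theta(b))\# h_2 g$, and these agree because $\theta(a)\theta(1_A)=\theta(a 1_A)=\theta(a)$. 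I would also record that $\Psi(1_A\# 1_H)=\theta(1_A)\# 1_H$, which is an idempotent but not the unit of $B\# H$ (so $\Psi$ is non-unitary); in fact $\Psi(A\# H)=(\theta(1_A)\# 1_H)(B\# H)(\theta(1_A)\# 1_H)$-ish, but I only need that $\Psi$ is an algebra homomorphism into $B\# H$, not onto a corner.

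\textbf{Injectivity.} The main obstacle is showing $\Psi$ is injective, since the tensor product over $H_L$ can collapse elements and $\theta(A)$ is only a right ideal of $B$, not a direct summand. The plan is: suppose $\sum_i \theta(a_i)\# h_i=0$ in $B\# H$ with $\sum_i a_i\# h_i\in A\# H$; I want $\sum_i a_i\# h_i=0$ in $A\# H$. Choose a $\Bbbk$-basis and use that $B=H\triangleright\theta(A)$ together with the standard trick from the Hopf case \cite{AB}: apply a suitable ``evaluation'' or use that $B\# H\cong B\otimes_{H_L}H$ and that $\theta\otimes\mathrm{id}$ is injective $A\otimes_{H_L}H\to B\otimes_{H_L}H$ provided $H$ is free (or at least flat) as a left $H_L$-module — $H_L$ is a separable $\Bbbk$-algebra, hence $H$ is projective as a left $H_L$-module (in fact free if one works relative to the separability idempotent), so $-\otimes_{H_L}H$ is exact and the monomorphism $\theta\colon A\hookrightarrow B$ is preserved. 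I would phrase this via the separability idempotent $e_L=S(1_1)\otimes 1_2$ of $H_L$: from $\sum_i\theta(a_i)\# h_i=0$ one pulls back through the $H_L$-linear splitting, lands in $A\otimes_{\Bbbk}H$ before dividing by $H_L$, and concludes. Finally one reconciles $A\# H=A\otimes_{H_L}H$ with $A\underline{\#}H$ only at the level needed; here the statement is about $A\# H$, so I would stop once injectivity of $\Psi$ on $A\# H$ is established.
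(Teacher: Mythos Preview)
Your treatment of well-definedness and multiplicativity matches the paper's essentially line for line. For injectivity you take a genuinely different route. The paper argues by constructing an explicit one-sided inverse: since $\theta$ is injective, the assignment $\theta(a)\otimes h\mapsto a\otimes h$ yields a well-defined $\Bbbk$-linear map $\Psi'\colon\theta(A)\otimes_{H_L}H\to A\otimes_{H_L}H$ (the $H_L$-balancedness check being the same computation via Lemma~\ref{p=g}), and then $\Psi'\circ\Psi=I_{A\otimes_{H_L}H}$ forces $\Psi$ to be injective. You instead appeal to the separability of $H_L$ over $\Bbbk$ to conclude that $H$ is flat as a left $H_L$-module, so the exact functor $-\otimes_{H_L}H$ carries the right $H_L$-module monomorphism $\theta$ to a monomorphism. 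Both arguments are valid; the paper's is entirely hands-on and self-contained, while yours is more structural and dispatches the embedding $\theta(A)\otimes_{H_L}H\hookrightarrow B\otimes_{H_L}H$ in one stroke. One caution on your phrasing: the aside about ``pulling back through the $H_L$-linear splitting'' via $e_L$ is not quite the mechanism you want---the separability idempotent splits the multiplication $H_L\otimes_\Bbbk H_L\to H_L$, not the inclusion $\theta(A)\hookrightarrow B$. The clean form of your argument is simply: $H_L$ is finite-dimensional separable over $\Bbbk$, hence semisimple, hence every left $H_L$-module is flat; drop the parenthetical about freeness.
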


\begin{proof}
	Define
	$$\begin{array}{rl}
		\tilde{\Psi}\colon A \times H &\to B\otimes_{H_L} H\\
		(a, h)&\mapsto \theta(a)\otimes h.
	\end{array}$$

	For all  $a\in A, h\in H$ and $z\in H_L$, we have
	$$
	\begin{array}{rcl}
		\tilde{\Psi}(a, zh)&=& \theta(a)\otimes z h\\
		&=&\theta(a)\triangleleft z\otimes h\\
		&=&S_R^{-1}(z)\triangleright\theta(a)\otimes h\\
		&\overset{\eqref{p=g}}{=}& S_R^{-1}(z)\cdot\theta(a)\otimes h\\
		&=&\theta(S_R^{-1}(z)\cdot a)\otimes h\\
		&=&\theta(a\triangleleft z)\otimes h\\
		&=&\tilde{\Psi}(a\triangleleft z, h),
	\end{array}
	$$
which shows that $\tilde{\Psi}$ is $H_L$-balanced. Thus, there exists a $\Bbbk$-linear
map $\Psi$ from $A\otimes_{H_L} H$ to   $B\otimes_{H_L} H$
defined by $\Psi(a\otimes h) = \theta(a)\otimes h$.

\vu

It follows from the injectivity of $\theta$ and  similar calculations that the
$\Bbbk$-linear map $\Psi'\colon \theta(A) \otimes_{H_L} H \to A \otimes_{H_L} H$
given by $\Psi'(\theta(a)\otimes h) = a\otimes h$ is well defined. Furthermore, $\Psi$
is a monomorphism because $\Psi'\circ\Psi = I_{A\otimes_{H_L} H}$.

\vu

It remains to check that $\Psi\colon A\# H\to B\# H$ is multiplicative. In fact, for all $a,b\in A$ and $h,g\in H$,
we have
	$$
		\begin{array}{rcl}
			\Psi((a\# h)(b\# g))&=&\Psi(a(h_1\cdot b)\#h_2 g)\\
			&=&\theta(a (h_1\cdot b))\#h_2 g\\
			&=&\theta(a) \theta(h_1\cdot b)\#h_2 g\\
			&=&\theta(a) (h_1\cdot \theta(b))\#h_2 g\\
			&=&\theta(a) (h_1\triangleright \theta(b))\#h_2 g\\
			&=&(\theta(a)\# h)(\theta(b)\# g)\\
			&=&\Psi(a\# h)\Psi(b\# g).
		\end{array}
	$$
\end{proof}

In the sequel, we will construct the bimodules which will define a Morita context for $A \underline{\#} H$ and $B\# H$.
For this construction we will suppose that $\theta(A)$ is an ideal of B and the antipode $S$ of $H$ is invertible. This assumption
on $S$ is necessary because in this construction we will need to make use of Proposition \ref{2.25}(ii).

\vu

Now let $M=\Psi(A\# H)$ and $N$ be the vector space generated by the elements of the form
$(h_1\triangleright\theta(a))\# h_2$, for all $a\in A$ and $h\in H$.

\begin{prop}
	With the above notations and assumptions, $M$ is a right $B\# H$-module and $N$ is
a left $B\# H$-module, via the multiplication of $B\# H$.
\end{prop}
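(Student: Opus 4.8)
The plan is to verify the module axioms directly, using the structure already established for the partial smash product $A\# H$ and the global smash product $B\# H$. For $M = \Psi(A\# H)$: since $\Psi$ is an algebra monomorphism by Proposition \ref{monofrompartialtoglobal}, $M$ is a subalgebra of $B\# H$, and hence in particular $M$ is closed under right multiplication by elements of the form $\Psi(a\# h) = \theta(a)\# h$. The real content is to show $M$ is closed under right multiplication by \emph{arbitrary} elements $b\# g$ of $B\# H$, not just those in the image of $\Psi$. So the key step is to take a typical generator $\Psi(a\# h) = \theta(a)\# h \in M$ and a typical generator $b\# g \in B\# H$ (with $b\in B = H\triangleright\theta(A)$, so it suffices to take $b = k\triangleright\theta(c)$), compute $(\theta(a)\# h)(k\triangleright\theta(c)\# g) = \theta(a)(h_1\triangleright(k\triangleright\theta(c)))\# h_2 g = \theta(a)(h_1k\triangleright\theta(c))\# h_2 g$, and exhibit this as $\Psi$ of something in $A\# H$. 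The trick here is that $\theta(A)$ is an ideal of $B$, so $\theta(a)(h_1 k\triangleright\theta(c)) \in \theta(A)$, say equal to $\theta(d)$ for a unique $d\in A$; then the element is $\theta(d)\# h_2 g$. One must check this lands in $M = \Psi(A\# H)$, which amounts to checking it equals $\Psi$ applied to a sum of the form $\sum (d_i\# \ell_i)$ — and since $\Psi(a'\# h') = \theta(a')\# h'$, it is immediate that $\theta(d)\# h_2 g = \Psi(d\# h_2 g) \in M$ once one is careful that the element $\theta(a)(h_1 k \triangleright \theta(c))\otimes h_2 g$ can genuinely be rewritten with its first tensorand inside $\theta(A)$ — this works because of the ideal hypothesis and because $\theta(a) = \theta(1_A)\theta(a)$ already forces things into $\theta(A)$.

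For $N$, the approach is analogous but the roles are reversed: $N$ is spanned by $(h_1\triangleright\theta(a))\# h_2$, and I need to show it is closed under \emph{left} multiplication by $b\# g \in B\# H$. Again write $b = k\triangleright\theta(c)$ and compute
\[
(k\triangleright\theta(c)\# g)\bigl((h_1\triangleright\theta(a))\# h_2\bigr) = (k\triangleright\theta(c))(g_1 h_1\triangleright\theta(a))\# g_2 h_2.
\]
Using that $\theta(A)$ is an ideal, $\theta(c)(something\triangleright\theta(a))\in\theta(A)$; but here I want to re-express the whole thing in the form $\sum (\ell_1\triangleright\theta(d))\#\ell_2$. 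The key manipulation is Proposition \ref{2.25}(ii) (valid since $S$ is invertible and — after transporting the partial action to $\theta(A)$ via the globalization — symmetric there), which lets one rewrite a product of two "acted" elements $(h\cdot x)(k\cdot y)$ in the form $k_2\cdot((S^{-1}(k_1)h\cdot x)y)$, i.e. pull a single action symbol outside. Applying this (in $B$, with $\triangleright$) to $(k\triangleright\theta(c))(g_1 h_1\triangleright\theta(a))$ rewrites it as $(g_1 h_1)_2 \triangleright (\cdots)$, which after reindexing against the $\# g_2 h_2$ slot collapses to the desired shape $(\ell_1\triangleright\theta(d))\#\ell_2$ with $\ell = gh$ and $\theta(d)$ the bracketed expression — which lies in $\theta(A)$ precisely because $\theta(A)$ is an ideal of $B$. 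I also need $N$ closed under right multiplication within its own span to make sense of "via the multiplication of $B\# H$", but that is subsumed once closure under left $B\# H$-multiplication is shown, since $N \subseteq B\# H$.

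Routine bookkeeping then finishes both cases: associativity and the unit action are inherited from $B\# H$ (note $B\# H$ is unital with $1_B\# 1_H$ since $B$ — here meaning the unital globalization — is an $H$-module algebra; more precisely one works with $1_{B}\# 1_H$ acting, and checks it fixes the generators of $M$ and $N$, using $\theta(1_A)\theta(a) = \theta(a)$ and the counit identities from Lemma \ref{p1}). The main obstacle is the $N$-case: getting the left action to preserve the specific spanning form $(h_1\triangleright\theta(a))\# h_2$ rather than just landing in $B\# H$, and this is exactly where the invertibility of $S$ enters through Proposition \ref{2.25}(ii) — one has to massage the Sweedler indices carefully so that the comultiplication applied to $gh$ matches the two occurrences $(g_1h_1, g_2h_2)$ and a single surviving action symbol. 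A secondary subtlety throughout is making sure that "$\theta(A)$ is an ideal of $B$" is invoked legitimately: it guarantees each product $\theta(a)\cdot(\text{anything in }B)$ and $(\text{anything in }B)\cdot\theta(a)$ lies back in $\theta(A)$, which is what lets the first tensorand be pushed into $\theta(A)$ at each step. I expect the write-up to be three or four displayed computations, one for $M$ and two for $N$ (left-closure and the bimodule compatibility with associativity), plus a line for the unit.
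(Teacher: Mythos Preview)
Your approach is essentially the paper's own: for $M$ you compute the right product and observe the first tensorand lands in $\theta(A)$ (the paper makes this explicit as $\theta(a(h_1k\cdot b))$ via the induced-partial-action formula $\theta(1_A)(h\triangleright\theta(b))=\theta(h\cdot b)$, rather than citing the two-sided ideal hypothesis, but either route works here), and for $N$ you invoke Lemma~\ref{2.25}(ii) exactly as the paper does to pull a single $\triangleright$ outside and obtain the form $(\ell_1\triangleright\theta(d))\#\ell_2$. One caveat: you assume $B$ is unital and propose checking that $1_B\#1_H$ fixes the generators, but in this section $B$ is \emph{not} assumed unital (the Morita \emph{equivalence} is only asserted later under that extra hypothesis); accordingly the paper's proof checks only closure and then cites associativity of $B\# H$, with no unit discussion at all---you should simply drop that part.
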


\begin{proof} Let $\theta(a)\# h\in M$ and $k\triangleright\theta(b)\# g\in B\# H$, so
$$
	\begin{array}{rcl}
		(\theta(a)\# h)(k\triangleright\theta(b)\# g)&=&\theta(a)(h_1k\triangleright\theta(b))\# h_2g\\
		&=&\theta(a)(h_1 k\cdot \theta(b))\# h_2 g\\
		&=&\theta(a(h_1 k\cdot b))\# h_2 g
	\end{array}
$$
	that lies in $M$.

\vd
	
	Let $k\triangleright\theta(a)\# h \in B\# H$ and $g_1\triangleright\theta(b)\# g_2 \in N$. Then we have
	$$
		\begin{array}{rcl}
			(k\triangleright\theta(a)\# h)(g_1\triangleright\theta(b)\# g_2)&=&(k\triangleright\theta(a)) (h_1g_1\triangleright\theta(b))\# h_2g_2\\
			&\Ref{2.25}&h_2g_2\triangleright[(S^{-1}(h_1g_1) k\triangleright\theta(a))\theta(b)]\# h_3g_3
		\end{array}
	$$
	that lies in $N$ because $\theta(A)$ is an ideal of $B=H\triangleright\theta(A)$.

\vu

Now, the assertion follows from the associativity of $B\# H$.
\end{proof}

\begin{prop}
Keeping the same notations and  assumptions as above, $M$ is a left $A\underline{\#}H$-module  and $N$ is a right
$A \underline{\#} H$-module via the actions
$$
\begin{array}{cccc}
	\blacktriangleright\colon & A \underline{\#} H \otimes M & \to & M\\
	& a \underline{\#} h \otimes m & \mapsto & \Psi(a \underline{\#} h ) m
\end{array}
$$
and
$$
\begin{array}{cccc}
	\blacktriangleleft\colon & N \otimes A \underline{\#} H & \to & N\\
	& n \otimes a \underline{\#} h & \mapsto & n \Psi(a \underline{\#} h)
\end{array}
$$ respectively, where $\Psi$ is the non unitary monomorphism defined in 7.3. Moreover, $M$ and $N$ are bimodules.
\end{prop}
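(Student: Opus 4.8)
The statement asserts that $M = \Psi(A\#H)$ is a left $A\underline{\#}H$-module and $N$ is a right $A\underline{\#}H$-module under the actions given by $\Psi$, and that together with the right (resp. left) $B\#H$-module structures established in the previous proposition, $M$ becomes an $(A\underline{\#}H,\,B\#H)$-bimodule and $N$ a $(B\#H,\,A\underline{\#}H)$-bimodule. The overall strategy is to reduce everything to three facts: (a) $\Psi\colon A\#H\to B\#H$ is an algebra homomorphism (Proposition 7.3, $=$ \texttt{monofrompartialtoglobal}); (b) $A\underline{\#}H = (A\#H)(1_A\#1_H)$ is a unital algebra with unit $1_A\#1_H$, sitting inside $A\#H$; and (c) the associativity of multiplication in $B\#H$ together with the module structures from the preceding proposition.

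\medskip
\textbf{Step 1: $M$ is a left $A\underline{\#}H$-module.} Since $A\underline{\#}H$ is a subset of $A\#H$ and $\Psi$ is multiplicative, the map $a\underline{\#}h\otimes m\mapsto \Psi(a\underline{\#}h)\,m$ is well defined (the product $\Psi(a\underline{\#}h)m$ is computed inside $B\#H$, and $M$ is closed under right multiplication by $B\#H\supseteq\Psi(A\#H)$, hence in particular $\Psi(a\underline{\#}h)m\in M$ since $\Psi(a\underline{\#}h)\in M$ and $M$ is a right $B\#H$-module — actually more directly: $\Psi(a\underline{\#}h)\Psi(m')=\Psi((a\underline{\#}h)m')\in\Psi(A\#H)=M$). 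Associativity of the action, $(x\underline{\#}k)\blacktriangleright\big((y\underline{\#}l)\blacktriangleright m\big) = \big((x\underline{\#}k)(y\underline{\#}l)\big)\blacktriangleright m$, follows from $\Psi$ being multiplicative and from associativity in $B\#H$. The unitality, $(1_A\#1_H)\blacktriangleright m = \Psi(1_A\#1_H)m = m$, uses that $1_A\#1_H$ is the unit of $A\underline{\#}H$ and that $\Psi$ sends it to an element acting as identity on $M = \Psi(A\#H)$; here one should note that although $\Psi$ is \emph{non-unitary} as a map into $B\#H$ (i.e.\ $\Psi(1_A\#1_H)\neq 1_B\#1_H$ in general), it does fix every element of its own image, because $\Psi(1_A\#1_H)\Psi(m') = \Psi((1_A\#1_H)m') = \Psi(m')$ for all $m'\in A\#H$.

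\medskip
\textbf{Step 2: $N$ is a right $A\underline{\#}H$-module.} Here the key point — which is really the main obstacle — is \emph{well-definedness}: one must check that $n\,\Psi(a\underline{\#}h)\in N$ for every $n\in N$ and $a\underline{\#}h\in A\underline{\#}H$. Writing $n = (k_1\triangleright\theta(b))\#k_2$ and $\Psi(a\underline{\#}h)\in\Psi(A\#H)$, so of the form $\theta(c)\#g$, the product is computed exactly as in the previous proposition using Lemma~\ref{2.25}(ii) (which requires the hypothesis that $S$ is invertible and that $\theta(A)$ is an ideal of $B$); one obtains an expression of the form $g_2\triangleright\big[(S^{-1}(\cdots)\triangleright\theta(b))\,\theta(c)\big]\#g_3$, and since $\theta(A)$ is an ideal, $(S^{-1}(\cdots)\triangleright\theta(b))\theta(c)\in\theta(A)$, so the whole thing again has the generating form of elements of $N$. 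Then associativity and unitality of the action follow as in Step 1 from multiplicativity of $\Psi$ and associativity in $B\#H$.

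\medskip
\textbf{Step 3: The bimodule compatibilities.} Finally one checks that the left $A\underline{\#}H$-action and the right $B\#H$-action on $M$ commute: $\big((a\underline{\#}h)\blacktriangleright m\big)\cdot\beta = \Psi(a\underline{\#}h)m\beta = (a\underline{\#}h)\blacktriangleright(m\cdot\beta)$ for $\beta\in B\#H$ — immediate from associativity in $B\#H$. Likewise for $N$ with the sides reversed. This is routine once Steps 1 and 2 are in place. I expect the only genuinely non-formal point in the whole proof to be the verification in Step 2 that $N$ is stable under right multiplication by $\Psi(A\underline{\#}H)$, which is precisely why the hypotheses "$\theta(A)$ is an ideal of $B$" and "$S$ invertible" were imposed just before the statement; everything else is bookkeeping with the homomorphism $\Psi$ and associativity.
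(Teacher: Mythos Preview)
Your proposal is correct and follows essentially the same approach as the paper. The paper's proof is even more terse: it states outright that the well-definedness of $\blacktriangleright$, together with associativity, unitality, and the bimodule compatibilities, all follow immediately from $A\underline{\#}H$ being a subalgebra of $A\#H$ and $\Psi$ being multiplicative, and then carries out only the one non-formal computation---namely, that $n\,\Psi(a'\underline{\#}g)\in N$---exactly along the lines you sketch in Step~2, expanding $\Psi(a'\underline{\#}g)=\theta(a'(g_1\cdot 1_A))\#g_2$ and applying Lemma~\ref{2.25}(ii) to land in $N$ via the ideal hypothesis on $\theta(A)$.
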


\begin{proof}
	We need only to ensure that $\blacktriangleleft$ is well defined. The well definition
of $\blacktriangleright$ as well as the other assertions follow from the fact that $A \underline{\#} H$ is a subalgebra of $A\# H$ and $\Psi$
is  multiplicative.

\vu
	
	Given $h_1\triangleright\theta(a)\# h_2\in N$ and $a' \underline{\#} g \in A \underline{\#} H$, we have
	$$
		\begin{array}{rcl}
			(h_1\triangleright\theta(a)\# h_2)\blacktriangleleft(a'\underline{\#} g) &=& (h_1\triangleright\theta(a)\# h_2)(\Psi(a'\underline{\#} g))\\
			&=& (h_1\triangleright\theta(a)\# h_2)(\theta(a'(g_1\cdot 1_A))\# g_2)\\
			&=& (h_1\triangleright\theta(a)\# h_2)(\theta(a')(g_1\cdot \theta(1_A)\# g_2))\\
			&=& (h_1\triangleright\theta(a)\# h_2)(\theta(a')(g_1\triangleright \theta(1_A)\# g_2))\\
			&=& (h_1\triangleright\theta(a)) (h_2\triangleright\theta(a'))(h_3g_1\triangleright \theta(1_A))\# h_4 g_2\\
			&=& (h_1\triangleright\theta(a a'))(h_2 g_1\triangleright \theta(1_A))\# h_3 g_2\\
			&\Ref{2.25}& h_3 g_2\triangleright [(S^{-1}(h_2 g_1) h_1\triangleright\theta(a a'))\theta(1_A)]\# h_4 g_3
		\end{array}
	$$
	that lies in $N$, because $\theta(A)$ is an ideal of $B$, which ensures that $\blacktriangleleft$ is also well defined.
\end{proof}

Now, we consider the maps $[,]\colon N\otimes_{A \underline{\#} H} M \to B\# H$ and $(,)\colon M\otimes_{B\# H} N\to \Psi(A \underline{\#} H)\simeq A \underline{\#} H$
given by the multiplication of $B\# H$. Both such maps are well defined because $M,N\subseteq B\# H$.

\vu

\begin{teo}
	$(A \underline{\#} H, B\# H, M, N, (,), [,] )$ is a Morita context.
Moreover, the maps $[,]$ and $(,)$ are both surjective. In particular, if $B$ also has an identity element, then $A \underline{\#} H$ and  $B\# H$ are Morita equivalent.
\end{teo}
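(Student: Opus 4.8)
The plan is to exploit that, by construction, both $M=\Psi(A\#H)$ and $N$ are $\Bbbk$-subspaces of $B\#H$ and that every structure map of the putative context is, up to the embedding $\Psi$ of Proposition \ref{monofrompartialtoglobal}, just the multiplication of $B\#H$: the $(B\#H)$-actions on $M$ and $N$, the $(A\underline{\#}H)$-actions (pre-composed with $\Psi$), and the maps $[\,,\,]$ and $(\,,\,)$. Hence \emph{all} the axioms to be verified --- that $[\,,\,]$ is $(A\underline{\#}H)$-balanced and a $(B\#H,B\#H)$-bimodule map, that $(\,,\,)$ is $(B\#H)$-balanced and an $(A\underline{\#}H,A\underline{\#}H)$-bimodule map, and that the two Morita identities $(m,n)m'=m[n,m']$ and $[n,m]n'=n(m,n')$ hold --- are instances of the associativity of $B\#H$ together with the multiplicativity of $\Psi$ and the bimodule structures already established. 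So the only genuinely new points are: (a) that the two pairings take values in the claimed codomains, and (b) surjectivity. For (a), $[n,m]\in B\#H$ is clear; for $(m,n)=mn\in\Psi(A\underline{\#}H)$ I would first record the identity $n\,(\theta(1_A)\#1_H)=n$ for all $n\in N$ (a one-line computation with the product of $B\#H$, using that $\triangleright$ acts by algebra maps and $\theta(a)\theta(1_A)=\theta(a)$). Since $\theta(1_A)\#1_H=\Psi(1_A\#1_H)$, this gives $mn=mn\,\Psi(1_A\#1_H)=\Psi\bigl(\eta(1_A\#1_H)\bigr)$, where $\eta\in A\#H$ is the $\Psi$-preimage of $mn$; as $\eta(1_A\#1_H)\in(A\#H)(1_A\#1_H)=A\underline{\#}H$, we are done.

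For the surjectivity of $(\,,\,)$ the plan is to single out the element $n_0=(1_1\triangleright\theta(1_A))\#1_2\in N$ and to compute, for $m=\Psi(a\#h)=\theta(a)\#h$,
\[
m\,n_0=\theta(a)\bigl((h_11_1)\triangleright\theta(1_A)\bigr)\# h_21_2=\theta(a)\bigl(h_1\triangleright\theta(1_A)\bigr)\# h_2=\theta\bigl(a(h_1\cdot 1_A)\bigr)\# h_2=\Psi\bigl((a\#h)(1_A\#1_H)\bigr),
\]
where one uses $\Delta(h)\Delta(1_H)=\Delta(h)$, the induced-action formula $h_1\cdot\theta(1_A)=\theta(1_A)\bigl(h_1\triangleright\theta(1_A)\bigr)$ together with $\theta(a)\theta(1_A)=\theta(a)$, and the globalization compatibility $h_1\cdot\theta(1_A)=\theta(h_1\cdot 1_A)$. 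As $a$ and $h$ vary, $(a\#h)(1_A\#1_H)$ runs over a spanning set of $A\underline{\#}H=(A\#H)(1_A\#1_H)$, so $(\,,\,)$ is onto $\Psi(A\underline{\#}H)$.

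The surjectivity of $[\,,\,]$, i.e.\ that $N\cdot M=B\#H$, is the heart of the matter and the step I expect to be the main obstacle. First, $N\cdot M$ is a two-sided ideal of $B\#H$, since $N$ is a left and $M$ a right $B\#H$-module. Next I would prove $M\subseteq N\cdot M$: using the weak-Hopf identity $S(1_1)1_2=1_H$ (the unit condition for the separability idempotent $e_L=S(1_1)\otimes 1_2$ of $H_L$), together with $S$ invertible and \eqref{p11-4}, one has $S_R^{-1}(1_2)1_1=1_H$, whence
\[
n_0\cdot\Psi(1_A\# h)=(1_1\triangleright\theta(1_A))\#1_2h=\bigl((S_R^{-1}(1_2)1_1)\triangleright\theta(1_A)\bigr)\# h=\theta(1_A)\# h,
\]
so $\Psi(1_A\#H)\subseteq N\cdot M$; then $\Psi(1_A\#H)\cdot M=\Psi\bigl((1_A\#H)(A\#H)\bigr)=\Psi(A\#H)=M$ because $1_A\#1_H$ is a left unit of $A\#H$, and hence $M\subseteq N\cdot M$. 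At this point $N\cdot M$ contains the two-sided ideal of $B\#H$ generated by $M$, and it remains to see that this ideal is all of $B\#H$. This is where the standing hypotheses of the section are essential: since $B=H\triangleright\theta(A)$ with $\theta(A)$ an ideal of $B$, a further computation rewrites an arbitrary generator $(k\triangleright\theta(a))\#g$ of $B\#H$ as a product of an element of $B\#H$, a generator of $M$, and another element of $B\#H$, by pushing the $H$-action through with Proposition \ref{2.25}(ii) and reabsorbing the resulting antipode via the identity $k_2S^{-1}(k_1)\otimes k_3=1_1\otimes 1_2k$ of \eqref{u1tu2h=h2sih1th3}; the ideal property of $\theta(A)$ is precisely what confines the intermediate terms to $\theta(A)$. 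Carrying out this bookkeeping with the coproducts is the part requiring real care (and is exactly why $S$ invertible and $\theta(A)$ an ideal were assumed).

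Finally, $A\underline{\#}H$ is unital and, when $B$ has an identity, $B\#H$ is unital with unit $1_B\#1_H$ (as is standard for smash products by weak Hopf algebras); a surjective Morita context between two rings with identity is automatically strict, so $[\,,\,]$ and $(\,,\,)$ become isomorphisms and $A\underline{\#}H$ and $B\#H$ are Morita equivalent.
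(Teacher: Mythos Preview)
Your handling of the Morita context axioms and of the surjectivity of $(\,,\,)$ is essentially correct and close in spirit to the paper's. The identity $n\,(\theta(1_A)\#1_H)=n$ is right, and the computation $m\cdot n_0=\Psi\bigl((a\#h)(1_A\#1_H)\bigr)$ (with $n_0=(1_1\triangleright\theta(1_A))\#1_2$) is valid. One small slip: in your computation of $n_0\cdot\Psi(1_A\#h)$ the first equality should read $\theta(1_A)\bigl(1_1\triangleright\theta(1_A)\bigr)\#1_2h$, not $\bigl(1_1\triangleright\theta(1_A)\bigr)\#1_2h$; the extra $\theta(1_A)$ comes from the module-algebra axiom and the weak coproduct of $1_H$. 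The conclusion $\theta(1_A)\#h$ still follows (use $\theta(1_A)(1_1\triangleright\theta(1_A))=\theta(1_1\cdot 1_A)=\theta(1_A)\triangleleft S(1_1)$ and then pass $S(1_1)$ across $\otimes_{H_L}$), but the displayed line as written is not correct.

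The real gap is in the surjectivity of $[\,,\,]$. Your route --- show $M\subseteq NM$, observe $NM$ is a two-sided ideal, then argue that the ideal generated by $M$ is all of $B\#H$ --- is left unfinished precisely at the step you flag as ``requiring real care,'' and that step is not a routine bookkeeping exercise in the form you describe. The paper bypasses all of this with a single explicit factorization: for every $a\in A$ and $h,g\in H$,
\[
\bigl(h_1\triangleright\theta(a)\#h_2\bigr)\bigl(\theta(1_A)\#S(h_3)g\bigr)=h\triangleright\theta(a)\#g,
\]
which immediately gives $B\#H\subseteq NM$. The computation uses only $\theta(a)\theta(1_A)=\theta(a)$, the identity $h_1\otimes h_2S(h_3)=1_1h\otimes 1_2$ from \eqref{p4-1}, and the $H_L$-balancing of the tensor: $1_1h\triangleright\theta(a)\#1_2g=(h\triangleright\theta(a))\triangleleft S(1_1)\#1_2g=h\triangleright\theta(a)\#S(1_1)1_2g=h\triangleright\theta(a)\#g$. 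Note that neither $S^{-1}$ nor the hypothesis that $\theta(A)$ is an ideal is needed for this step; those standing assumptions were already consumed in establishing the bimodule structures (via Proposition~\ref{2.25}(ii)). So the missing idea is simply to write down the correct element of $M$ --- namely $\theta(1_A)\#S(h_3)g$ --- rather than to reason indirectly through ideals.
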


\begin{proof}
	The main assertion follows from Propositions 7.4 and 7.5, and from the associativity of the multiplication of $B\# H$.

\vu
	
	For the surjectivity of $(,)$ and $[,]$ it is enough to show that $M N = \Psi(A \underline{\#} H)$ and $N M = B\# H$.
	
	In fact, clearly $\Psi(A \underline{\#} H)\subseteq M N$. Conversely, given $g_1\triangleright\theta(b)\# g_2\in N$ and
$\theta(a)\# h\in M$ we have
	$$
		\begin{array}{rcl}
			(\theta(a)\#h)(g_1\triangleright\theta(b)\# g_2)&=& \theta(a)(h_1 g_1\triangleright\theta(b))\# h_2 g_2\\
			&=& \theta(a)(h_1 g_1\cdot \theta(b))\# h_2 g_2\\
			&=& \theta(a(h_1 g_1\cdot b))\# h_2 g_2\\
			&=& \theta(a(h_1 g_1\cdot b1_A))\# h_2 g_2\\
			&=& \theta(a(h_1 g_1\cdot b)(h_2g_2\cdot 1_A))\# h_3 g_3\\
			&=& \Psi(a(h_1 g_1\cdot b) \underline{\#} h_2g_2)
		\end{array}
	$$
	which lies in $\Psi(A \underline{\#} H)$. Hence, $M N = \Psi(A \underline{\#} H)$.

\vu
	
	Clearly, we have that $N M \subseteq B\# H$. To prove that $B\# H \subseteq N M$ it is enough to check that the equality
	$$(h_1\triangleright\theta(a)\# h_2)(\theta(1_A)\#S(h_3)g) = h\triangleright\theta(a)\# g$$
holds for all $a,b\in A$ and $h,g\in H$. Indeed,
$$
	\begin{array}{rcl}
		(h_1\triangleright\theta(a)\# h_2)(\theta(1_A)\#S(h_3)g)&=& (h_1\triangleright\theta(a))(h_2\triangleright \theta(1_A))\#h_3 S(h_4)g\\
		&=& h_1\triangleright\theta(a)\# h_2 S(h_3) g\\
		&\Ref{p4-1}& 1_1 h\triangleright\theta(a)\# 1_2 g\\
		&\Ref{noHRcola}& 1_1\triangleright h\triangleright\theta(a)\# 1_2 g\\
		&=& (h\triangleright\theta(a))\triangleleft S(1_1)\# 1_2 g\\
		&=& h\triangleright\theta(a) \# S(1_1) 1_2 g\\
		&=& h\triangleright\theta(a) \# \varepsilon_R(1_H) g\\
		&=& h\triangleright\theta(a)\# g.
	\end{array}
$$
Therefore, $B\# H=N M$. The last assertion follows from \cite{Rowen}*{Theorems 4.1.4 and 4.1.17}
\end{proof}
\section{Acknowledgments}
The authors would like to thank Daiana A. Fl\^ores for her manuscript concerning to infinite dimensional weak Hopf algebras,
which makes part of a mini course given by herself at the ``Colloquium on Algebra and Representations - Quantum 2014''
and can be found at http://www.famaf.unc.edu.ar/$\sim$plavnik/quantum2014-en/index-en.html. The authors would also like to
thank the referee, whose comments and suggestions were very useful to improve this last version of the manuscript.

\end{document}